\newcommand{\gc}{\tilde g}
\newcommand{\nero}{\smallskip$\bullet\quad$}
\newcommand{\h}{\mathbb H}
\newtheorem{theorem}{Theorem} [section]
\newtheorem{lemma}{Lemma} [section]
\newtheorem{proposition}{Proposition} [section]
\newtheorem{corollary}{Corollary} [section]
\newtheorem{definition}{Definition} [section]
\newtheorem{remark}{Remark}[section]
\newcommand{\mnote}[1]
{\protect{\stepcounter{mnotecount}}$^{\mbox{\footnotesize
$
\bullet$\themnotecount}}$ \marginpar{
\raggedright\tiny\em $\!\!\!\!\!\!\,\bullet$\themnotecount: #1} }
\newcounter{mnotecount}[section]
\def\lra{\longrightarrow}
\def\Rn{\mathbb{R}^{n+1}}
\def \Rn {\mathbb{R}^{n+1}}
\begin{document}
\medskip
  \title{ On the Entropies of Hypersurfaces with bounded mean curvature}

\author{  S. Ilias, B.Nelli, M.Soret }

\date{}

\maketitle

\begin{abstract} 
We are interested in the impact of entropies on the geometry of a hypersurface of a Riemannian manifold.  In fact, we will be able to compare the volume entropy of a  hypersurface with that of the ambient manifold, provided some geometric assumption are satisfied. This depends on the existence of an embedded  tube  around such  hypersurface.  Among the  consequences of 
 our study of the entropies, we point out some new answers  to a question of do Carmo on stable Euclidean hypersurfaces of constant mean curvature.\\
\end{abstract}
 

\section{Introduction}

 For a complete noncompact Riemannian manifold, there are many results relating the exponential volume growth of geodesic balls to some of its Riemannian invariants. 
 An important result  concerning this asymptotic invariant is that obtained by Brooks, which gives an upperbound of the bottom of the essential spectrum in terms of the exponential volume growth of geodesic balls (see \cite{Br} and Theorem \ref{brooks}). 
 As we will  introduce other growth functionals, we will call such exponential volume growth, {\it volume entropy}, even if this is commonly used for the exponential volume growth of the universal cover of a compact manifold.

 The main purpose of the present paper is to give new results concerning various entropies for hypersurfaces of spaces of bounded geometry. A first immediate  remark is an extrinsic generalization of the result of Brooks. In fact, we will introduce an extrinsic entropy for hypersurfaces and show that  Brooks' upperbound of the bottom of its essential spectrum  is still valid with this new  extrinsic entropy (see Corollary 
 \ref{ext-brooks-theo}).  
 
Then, we   prove a general embedded tube result concerning hypersurfaces with mean curvature bounded away from zero 
(see  Theorem \ref{tubeplonge}).  More precisely, we prove the existence of an embedded tube 
of fixed radius around any hypersurface with bounded curvature and mean curvature bounded away from zero, properly embedded in a simply connected manifold with bounded geometry  (see also \cite{MT}).  
  The embeddedness of this tube, apart of its own interest, allows us to compare the volume of extrinsic balls of such hypersurfaces with that of the geodesic balls of the ambient space (Corollary \ref{volume-estimate}). 
This  gives, in particular, a comparison between the extrinsic volume entropy of the hypersurface and the (intrinsic) 
volume entropy of the ambient space. 
 In order to relate  the volume entropy to the total curvature entropy (see Section \ref{entropy-section} for the Definition) of  finite index hypersurfaces with constant mean curvature we use some Cacciopoli's inequalities 
(obtained in \cite{INS}).
Finally, using  these results about entropies,  we obtain 
some relations between spectra and curvature and various nonexistence results concerning some classes of hypersurfaces, including those of constant mean curvature stable or of  finite index. Among the problems considered, we will be interested in a question of do Carmo about Euclidean stable hypersurfaces of constant mean curvature. In fact, M. do Carmo (\cite{DoC1}) asked the following : "Is a noncompact, complete, stable, constant mean curvature  hypersurface of $\Rn,$ $n\geq 3,$  necessarily  minimal? "
The answer was already known to be positive for $n=2$ \cite{LR}, \cite{S}, \cite{Pa}. Later,  the answer was proved to be positive for $n=3,4,$  by  M.F. Elbert,  the second author and H. Rosenberg  \cite{ENR} and independently by  
X. Cheng \cite{X},  using  a Bonnet-Myers's type method. So far, stability alone - or more generally finite index - does not seem to yield  the answer to do Carmo's question in higher dimensions. Note that  do Carmo's question can also be asked for general ambient spaces. We observe also, that  R. Schoen, L. Simon and S.T. Yau \cite{SSY} proved that a  properly immersed orientable stable minimal hypersurface of ${\mathbb R}^{n+1}, $ 
$n \leq 5,$ which has Euclidean volume growth  - hence zero volume entropy - is a hyperplane.\\
 Using our results about entropies, we give a positive answer to do Carmo's question if the volume entropy of $M$ is zero  (Corollary \ref{carmo-euclidean-hyp}) ( the  ambient manifold  being arbitrary) or if the total curvature  entropy  of $M$ is zero and $n\leq 5$ (Theorem \ref{general-theo})  ( the ambient manifold being a space-form). 
In particular, the comparison between the volume entropy of the ambient space and that of the hypersurface will also gives a positive answer if the hypersurface  $M$ has bounded curvature and is properly embedded (Theorem \ref{embedded-hyp-eucl})
 ( the  ambient manifold  being a simply-connected manifold with bounded geometry and with zero volume entropy). 
In the same spirit, we obtain nonexistence results for finite index hypersurfaces.\\ 

 The plan of the paper is as follows: 
In Section \ref{stability}, we recall the notion of stability, finite indices, and establish a useful  analytic Lemma (Lemma \ref{max-prin}), depending on the maximum principle. The construction of embedded half-tubes around  a complete  hypersurface with bounded curvature and mean curvature bounded away from zero, that is  properly embedded in a simply-connected manifold of bounded curvature,  is carried out in Section \ref{tubes}.  Some results from \cite{INS} about Caccioppoli's inequalities for constant mean curvature hypersurfaces of finite index are recalled in  Section \ref{caccioppoli}.
Next, in Section \ref{entropy-section},   various notions of entropies are introduced and discussed.
In Section \ref{caccio-appli},  we apply Caccioppoli's inequalities in order to relate the volume entropy to the total curvature entropy. Finally, in Section \ref{docarmo-section} we  answer positively to do Carmo's question in the cases described above.  
Some of the arguments used in the proof of the halftube Theorem \ref{tubeplonge} and which are of independent interest  are detailed in Section 8 (Appendix).

{\bf Acknowledgements.} The second author would like to thank the LMPT of Universit\'e Fran\c{c}ois Rabelais de Tours, for hospitality during the preparation of this article.

    
  \section{Stability of an operator $L$ and of a manifold $M$ } 
\label{stability}

Consider a Riemannian manifold $M$ and the operator $L=\Delta+V$ where $\Delta=tr\circ Hess$ on $M$ and $V$ is a smooth potential. 
Associated to $L$ one has the quadratic form

\begin{equation*}
Q(f,f) := -\int_M f Lf
\end{equation*}
 defined on $f\in C_0^{\infty}(M).$ 
 
 Let $\Omega$ be a relatively compact domain of $M.$ Define  $i_{L|\Omega}$ (respectively $Wi_{L|\Omega}$) the number of negative eigenvalues of the operator  $-L,$ for the Dirichlet problem on $\Omega:$ 

$$-Lf=\lambda f,\ \   f_{|\partial\Omega}=0$$
$$  ({\rm resp.}  \  -Lf=\lambda f,\   f_{|\partial\Omega}=0,\  \int_{\Omega}f=0).$$ 

The ${\rm Index}(L)$ (resp. ${\rm WIndex}(L)$) is defined as follows

\begin{align*}
&{\rm Index}(L):=\sup\{ i_{L|\Omega} \ | \ \Omega\subset M \ {\rm rel.\ comp.}\} \\ 
&({\rm resp.} \ {\rm WIndex}(L):=\sup\{  {\sl Wi}_{L|\Omega} \ | \ \Omega\subset M \ {\rm rel.\ comp.}\})
\end{align*}

The operator $L$ is said {\em nonpositive}   (respectively {\em weakly nonpositive}) if  ${\rm Index}(L)=0$ (respectively  ${\rm WIndex}(L)=0$).

We will look at the following  situation in which the operator $L$ has a geometric interpretation.

Assume that $M$ is a constant mean curvature hypersurface in a manifold $\mathcal N.$
 It is known  that constant mean curvature hypersurfaces  are critical  for  the area functional  with respect to  compact support deformations that keep  the  boundary  fixed and  whose algebraic volume swept  in $\mathcal N$ during deformation remains zero  (see for instance    \cite{BdC} and \cite{BdC2}).
    The operator   $L:=\Delta+Ric(\nu,\nu)+|A|^2$ is called the {\em stability operator} of $M$. Here $Ric$, $\nu$ and $A$ are respectively the Ricci curvature, the unit normal field and the second fundamental form of the hypersurface $M$.
We define the ${\rm Index}(M):={\rm Index}(L)$ and ${\rm WIndex}(M):={\rm WIndex}(L).$ Moreover $M$ is said to be {\em stable} (respectively {\em weakly stable})  if $L$ is nonpositive (respectively {\em weakly nonpositive}).

Stability (respectively weak stability) of $M$  means that 

\begin{equation*}
Q(f,f)\geq0,\ \ \forall f\in C_0^{\infty}(M)\ \ ({\rm resp.}\ \forall f\in C_0^{\infty}(M)\ \int_Mf=0)
\end{equation*}

It is easy to see that $Q(f,f)$ is the second derivative of the volume in the direction of $f\nu$ (see \cite{BdC}),  then ${\rm Index}(M)$  (respectively ${\rm WIndex}(M)$)  measures the number of linearly independent normal deformations with compact support of $M,$  that decrease  area
(respectively  that  decrease    area, leaving fixed a volume).
When $H=0,$ one can drop the condition $\int_{M}f=0,$ then, for a minimal  hypersurface one consider only the ${\rm Index}(M).$ 
It is proved in \cite{BB} that ${\rm Index}(M)$  is finite if and only if  ${\rm WIndex}(M)$  is finite. So, when we assume 
finite index we  are referring to either of the indexes   without distinction. 
Finally, we recall that   $M$ has finite index if and only if there exists a compact subset $K$ of $M$ such that $M\setminus K$ is stable  (see Proposition 2.1 of \cite{INS}).
Later on, we will need an analytic Lemma that summarizes some properties of the  operator $L=\Delta+V,$ where  $V\in L^1_{loc}(M)$ is a 
suitable potential. 

\begin{lemma} \label{max-prin} 
  Let $M$ be a complete manifold and $L= \Delta + V$, 
  $V\in L^1_{loc}(M).$ 
Consider the following assertions:

(1)  For any compact 
 domain $\Omega \subset M,$ $L$ satisfies the maximum principle: for  $u,v \in W^{1,2}(\Omega)$ such that
    $u\leq v $ on $\partial \Omega$ and 
$Lu \geq Lv, $  one has $u \leq v$ on $\Omega$.

(2) The Dirichlet problem for $L$ on any  compact domain $\Omega \subset M$ has a unique solution in $W^{1,2}(\Omega)$.

(3) The operator  $L$ is nonpositive   on $M$. 

(4) There exists a  smooth positive $\phi$  on $M$ such that $L \phi \leq 0.$

(5) There exists a smooth  positive $\phi$  on $M$ such that $L \phi = 0.$
\

Then  (3),(4),(5) are equivalent and any of the latter assertions implies (1) which implies (2).
\end{lemma}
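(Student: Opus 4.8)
The plan is to prove the equivalences through the cycle $(5)\Rightarrow(4)\Rightarrow(3)\Rightarrow(5)$, and then to establish $(4)\Rightarrow(1)$ and $(1)\Rightarrow(2)$ separately. Throughout, since in the geometric setting $V=Ric(\nu,\nu)+|A|^2$ is continuous, I treat the integrations by parts classically; for merely $V\in L^1_{loc}$ the same identities are read in the weak $W^{1,2}$ sense, which is where one must be slightly careful but where no new idea is needed. The implication $(5)\Rightarrow(4)$ is trivial since $L\phi=0$ forces $L\phi\le0$.

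For $(4)\Rightarrow(3)$, the tool is the Fischer-Colbrie--Schoen ground-state substitution. Given a smooth $\phi>0$ with $L\phi\le0$, write an arbitrary $f\in C_0^\infty(M)$ as $f=\phi g$ with $g=f/\phi$ of compact support. Expanding $|\nabla f|^2$ and integrating the cross term by parts (using $2\phi g\,\nabla\phi\cdot\nabla g=\phi\nabla\phi\cdot\nabla(g^2)$) yields the identity
\begin{equation*}
Q(f,f)=\int_M |\nabla f|^2-Vf^2=\int_M \phi^2|\nabla g|^2-(L\phi)\,\phi\, g^2 .
\end{equation*}
Because $\phi>0$ and $L\phi\le0$, both terms on the right are nonnegative, so $Q(f,f)\ge0$ for all $f\in C_0^\infty(M)$; this is exactly nonpositivity of $L$, namely $(3)$.

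The main obstacle is $(3)\Rightarrow(5)$, the production of a \emph{global} positive solution of $L\phi=0$ out of a sign condition on a quadratic form. I would fix an exhaustion $\Omega_1\Subset\Omega_2\Subset\cdots$ with $\bigcup_k\Omega_k=M$ and a base point $p\in\Omega_1$. Nonpositivity gives $\lambda_1(-L,\Omega_k)\ge0$ for every $k$, and strict domain monotonicity of the first Dirichlet eigenvalue (comparing $\Omega_k\Subset\Omega_{k+1}$) upgrades this to $\lambda_1(-L,\Omega_k)>0$. Hence $0$ is not a Dirichlet eigenvalue, the problem $Lu_k=0$ on $\Omega_k$ with $u_k|_{\partial\Omega_k}=1$ is uniquely solvable, and the maximum principle on $\Omega_k$ (valid precisely because $\lambda_1>0$) together with the Harnack inequality forces $u_k>0$. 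Normalizing $v_k=u_k/u_k(p)$ gives positive solutions of $Lv_k=0$ with $v_k(p)=1$; the Harnack inequality then provides uniform two-sided bounds on each $\Omega_m$ for $k>m$, and interior elliptic estimates give $C^2_{loc}$ (or $W^{2,p}_{loc}$) control. A diagonal subsequence converges to a positive $\phi$ on all of $M$ with $L\phi=0$. I expect the delicate points here to be the strict eigenvalue positivity and, above all, the uniform Harnack control that guarantees the limit stays strictly positive rather than degenerating to $0$.

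Finally, for $(4)\Rightarrow(1)$ I would again use the positive supersolution $\phi$, now via the ground-state transform: for $w\in W^{1,2}(\Omega)$ put $\psi=w/\phi$, so that $\tfrac1{\phi}L(\phi\psi)=\tfrac1{\phi^2}\operatorname{div}(\phi^2\nabla\psi)+\tfrac{L\phi}{\phi}\psi$, a divergence-form elliptic operator with nonpositive zeroth-order coefficient $c=L\phi/\phi\le0$. Applying this to $w=u-v$ (where $Lw\ge0$ and $w\le0$ on $\partial\Omega$) and testing the weak inequality against $\psi_+=\max(\psi,0)\in W_0^{1,2}(\Omega)$ gives $\int_\Omega\phi^2|\nabla\psi_+|^2\le\int_\Omega\phi^2 c\,\psi_+^2\le0$, so $\psi_+\equiv0$ and $u\le v$, which is $(1)$. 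The last implication $(1)\Rightarrow(2)$ is then routine: $(1)$ applied to two solutions with identical data (in both orders) gives uniqueness, and the resulting triviality of the kernel makes the Dirichlet realization of $L$ an index-zero Fredholm operator on $\Omega$, so existence follows from the Fredholm alternative.
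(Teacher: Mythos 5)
Your argument is correct and, on the part the paper actually proves, it rests on the same device --- the ground-state substitution $w=\phi\psi$ --- but you take a more self-contained and in places genuinely different route. The paper disposes of the equivalence (3)$\Leftrightarrow$(4)$\Leftrightarrow$(5) entirely by citation (Fischer-Colbrie, Fischer-Colbrie--Schoen, Meeks--P\'erez--Ros); your substitution $f=\phi g$ for (4)$\Rightarrow$(3) and your exhaustion/Harnack/diagonal construction for (3)$\Rightarrow$(5) are exactly the arguments of those references, correctly reproduced (with the tacit assumption, also implicit in the paper, that $M$ is noncompact: on a closed manifold $L=\Delta-1$ satisfies (3) and (4) but not (5), so the strict eigenvalue monotonicity step really does need a proper exhaustion). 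For the passage to (1), the paper proves (5)$\Rightarrow$(1): since $L\phi=0$, the function $h=w/\phi$ satisfies the pure drift inequality $\Delta h+2\langle\nabla h,\nabla\phi/\phi\rangle\ge0$ and the classical maximum principle (Gilbarg--Trudinger, Theorem 10.1) applies. You prove (4)$\Rightarrow$(1) instead, keeping the nonpositive zeroth-order coefficient $L\phi/\phi$ and concluding from the energy estimate $\int_\Omega\phi^2|\nabla\psi_+|^2\le\int_\Omega\phi^2(L\phi/\phi)\,\psi_+^2\le0$; this variational form is better matched to the $W^{1,2}$ regularity the lemma allows for $u,v$, at the cost of being slightly longer than the paper's two lines. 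For (1)$\Rightarrow$(2) the paper only records uniqueness (applying (1) to $u-v$ and $v-u$); your Fredholm-alternative step supplies the existence half that statement (2) actually asserts. The one caveat, which you rightly flag, is that several ingredients (Harnack, interior elliptic estimates, the Fredholm theory) require $V\in L^p_{loc}$ with $p>n/2$ rather than merely $V\in L^1_{loc}$; this affects the paper's cited references equally and is harmless in the geometric application, where $V=Ric(\nu,\nu)+|A|^2$ is continuous.
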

 \begin{proof}

For the equivalence between  (3), (4), (5) see   \cite{FC}, \cite{FCS} and \cite{MPR} (if $M$ is an Euclidean space, such equivalence was originally due to   Glazman  (unpublished) and also  to Moss \& Pieperbrinck \cite{MP}).\\
  (1) $\implies$ (2): apply (1) to $u-v$ and $v-u$ and use linearity of $L$.\\
 (5) $\implies$ (1): by  the linearity of $L,$ it is enough to prove that if $w\in W^{1,2}(\Omega)$ such that $w\leq 0$ on $\partial \Omega$ and 
 $Lw\geq 0$ on $\Omega,$ then $w\leq 0$ on $\Omega.$ Define $h:=\frac{w}{\phi},$ where $\phi$ is as in (5). By a straightforward computation one has that 

 $$\Delta h + 2\langle \nabla h, \frac{\nabla\phi}{\phi}\rangle \geq 0.$$

By the maximum principle (see for instance  Theorem 10.1 \cite{GT}),  one has  $h\leq 0$ i.e. $w\leq 0$ on $\Omega.$
\end{proof}

 We need the following application of the maximum principle ((1) of Lemma  \ref{max-prin}).
 
\begin{corollary} \label{basic2}
Suppose $L$ is nonpositive and  there exists  a nonnegative $v\in W^{1,2}(M)$  such that  $Lv\leq -c$ for some positive constant $c.$  If there exists   a 
positive $u\in W^{1,2}_0(M)  $   
  such that $L u> -c$, then  $v\geq  u$ on $M$.
\end{corollary}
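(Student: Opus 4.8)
The plan is to prove the statement directly, by showing that the positive part $w:=(u-v)_+=\max(u-v,0)$ vanishes identically, since $w\equiv 0$ is exactly the desired inequality $v\ge u$ on $M$. First I would record that $w$ is an admissible test function for $Q$: because $u>0$ we have $0\le w\le u$, and since $u\in W^{1,2}_0(M)$ while $w\in W^{1,2}(M)$ (it is a Lipschitz function of $W^{1,2}$ data), the standard truncation property of $W^{1,2}_0$ gives $w\in W^{1,2}_0(M)$. The nonpositivity of $L$ then yields, after extending $Q$ from $C_0^\infty(M)$ to $W^{1,2}_0(M)$ by density, the inequality $Q(w,w)=\int_M|\nabla w|^2-\int_M Vw^2\ge 0$.

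The heart of the argument is the identity $\int_M w\,L(u-v)=-Q(w,w)$. To obtain it I set $\psi:=u-v$ and integrate $\int_M w\,L\psi=\int_M w\,\Delta\psi+\int_M Vw\psi$ by parts; as $w\in W^{1,2}_0(M)$ there is no boundary term and $\int_M w\,\Delta\psi=-\int_M\langle\nabla w,\nabla\psi\rangle$. On $\{\psi>0\}$ one has $w=\psi$ and $\nabla w=\nabla\psi$, while on $\{\psi\le 0\}$ one has $w=0$ and $\nabla w=0$ a.e.; hence $\langle\nabla w,\nabla\psi\rangle=|\nabla w|^2$ and $w\psi=w^2$ almost everywhere. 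Combining these gives $\int_M w\,L\psi=-\int_M|\nabla w|^2+\int_M Vw^2=-Q(w,w)$.

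Next I would compare signs. From $Lu>-c$ and $Lv\le -c$ we get $L\psi=Lu-Lv>0$ pointwise a.e., and since $w\ge 0$ this forces $\int_M w\,L\psi\ge 0$. More precisely, writing $\int_M w\,L\psi=\int_M w(Lu+c)-\int_M w(Lv+c)$, the first term is strictly positive whenever $w$ is not identically zero (because $Lu+c>0$ and $w\ge 0$ with $w>0$ on a set of positive measure), while the second term is $\le 0$ (because $Lv+c\le 0$). Thus $\int_M w\,L\psi>0$ unless $w\equiv 0$. On the other hand $\int_M w\,L\psi=-Q(w,w)\le 0$ by the previous paragraph. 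These are compatible only if $w\equiv 0$, i.e. $u\le v$ on $M$.

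The two integration-by-parts identities are routine; the care goes into the functional-analytic underpinnings, which is where I expect the main obstacle. One must (i) justify $w\in W^{1,2}_0(M)$ from $0\le w\le u\in W^{1,2}_0(M)$ via the truncation lemma, (ii) interpret the differential inequalities $Lu>-c$ and $Lv\le-c$ in the weak ($W^{1,2}$) sense compatible with the integration by parts, given that $V$ is only $L^1_{loc}$, and (iii) check that all the integrals against $w$ are finite so that the testing is legitimate. Granting these, the strict sign $L(u-v)>0$ together with $Q(w,w)\ge 0$ closes the argument; this is precisely the point where the nonpositivity of $L$ (equivalently assertions (3)--(5) of Lemma \ref{max-prin}) enters, so the comparison is really a global version of the maximum principle rather than a domain-by-domain application of assertion (1).
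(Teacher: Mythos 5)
Your argument is correct (modulo the functional-analytic caveats you yourself flag), but it takes a genuinely different route from the paper's. The paper's proof is a two-line reduction to the pointwise maximum principle: it takes $\Omega$ to be the support of $u$, observes that $u=0\le v$ on $\partial\Omega$ (this is where $v\ge 0$ enters) and that $Lv\le -c\le Lu$, and invokes assertion (1) of Lemma \ref{max-prin} --- which itself rests on the existence of a positive solution $\phi$ of $L\phi=0$ (the equivalence (3)$\Leftrightarrow$(5)) and the substitution $h=w/\phi$. You instead test the stability inequality directly with the truncation $w=(u-v)_+$, using only the nonpositivity of the quadratic form $Q$ (assertion (3)) and never passing through $\phi$ or a pointwise comparison on a domain. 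What your approach buys: it is self-contained at the level of $Q$, and it makes transparent exactly where the hypotheses are used --- $v\ge 0$ and $u>0$ give $0\le w\le u$ and hence $w\in W^{1,2}_0(M)$ by truncation, while the \emph{strict} inequality $Lu>-c$ is what rules out the degenerate case $Q(w,w)=0$ with $w\not\equiv 0$; it also sidesteps the paper's slightly delicate step of treating the support of a $W^{1,2}_0$ function as a compact domain to which Lemma \ref{max-prin}(1) applies. What the paper's route buys: brevity, and it would go through with the non-strict inequality $Lu\ge -c$, since the maximum principle needs no strictness. Both proofs carry the same technical burden of interpreting $Lu>-c$ and $Lv\le -c$ weakly when $V$ is merely $L^1_{loc}$ and of checking finiteness of the integrals involving $V$, which you correctly single out as the points requiring care.
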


\begin{proof}
Let $\Omega$ be the support of $u$, then $v\geq u$ on $\partial \Omega$. By hypothesis $Lv\leq -c\leq L u$.
 Since $L$ is nonpositive,   the  maximum principle ((1) of Lemma  \ref{max-prin})   yields $v\geq u $ on $\Omega$ and hence 
 on $M$.
\end{proof}
 
\begin{remark}  
 Functions similar to functions $u, v$  of   Corollary \ref{basic2} are studied in \cite{NK}.
\end{remark}

  %
  %
   
\section{Tubes around embedded   hypersurfaces of  mean curvature bounded from below}   
\label{tubes}

Throughout this section,  the manifold ${\mathcal N}$ is  a simply-connected manifold with bounded curvature.
	Let $M$  be an orientable  hypersurface  {\it properly embedded}  in ${\mathcal N}$ and assume that $M$ 
has  mean curvature   function  $H$  bounded away from zero. Orient $M$ by its mean curvature vector $\vec H.$ Then $\vec H=H\vec \nu$,
 and $H$ is positive.    
Moreover  we assume that $M$ has bounded second fundamental form.
Since $\mathcal N$ is simply connected, by   Jordan-Brouwer Separation  Theorem,
  $M$ separates $\mathcal N$ into two components (see  for instance \cite{Lim}).
We call  {\it mean convex} side of $M$ the component towards 
  which $\vec H$ points.
  We consider  the normal exponential map of $M$ defined by 
\begin{equation}   
\begin{array}{lll}
  exp:& M\times \mathbb{R}&\longrightarrow \mathcal N\\
& (p,r) &\mapsto exp_p(r\nu)
 \end{array}
\end{equation}

As $M$ and $\mathcal N$ have bounded curvature,  for any $p\in M,$ there exists a geodesic ball  $B_{r_0}(p)$  centered at $p$ 
   of radius  $r_0>0$ such that   $exp_{|B_{r_0}(p)\times(-r_0,r_0)}$ is a diffeomorphism on its image. 
 
Let 
   $$T^+( r_0) :=     exp\left( M \times (0,r_0)\right)$$
   be   the  {\it   half-tube } in $\mathcal N$  around  $M$ of radius $r_0.$ The half-tube   $T^+( r_0)$ is  locally diffeomorphic to $M \times (0,r_0).$
In this section, we prove the following  embeddness  property  of  the half-tube  $T^+( r_0)$  which  will be crucial in the proof of one of  the main results of this article.

 \begin{theorem}\label{tubeplonge}
      Let $M  $ be a  complete  hypersurface  properly embedded in a simply connected manifold $\mathcal N$.  Assume 
      that  $M$ and $\mathcal N$  have    bounded curvature  and $M$ has possibly  non empty compact boundary.   Assume that $M$ has  mean curvature  function  $H$  bounded  away from zero ($H\geq \frac{\epsilon}{n} >0$ ). Then one has the following results.
     
     (1) If $\partial M =\emptyset$, there exists an embedded half-tube 
      $T^+( \rho)$   contained in the mean-convex  side  of $M,$ where the radius $\rho$ depends  on the curvatures of $M$ and 
      $\mathcal{N}$.\\ 
     (2)  If $\partial M $ is compact,  then there exists a compact subset $K$ of $M$ and a half-tube 
      $T^+( \rho') $  of $M\setminus K$   contained in the mean-convex  side  of $M,$ where the radius $\rho'$ depends  on the   compact $K,$ and the  curvatures of $M$ and   $\mathcal{N}$.
      \end{theorem}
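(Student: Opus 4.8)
The plan is to choose the radius $\rho$ as the minimum of two quantities, each controlled by the geometric bounds: a \emph{focal radius} $\rho_1$, below which the normal exponential map is a local diffeomorphism, and an \emph{injectivity radius} $\rho_2$, below which distinct normal geodesics issuing into the mean-convex side cannot meet. Throughout, write $\Lambda$ for a bound on $|A|$ and $\kappa$ for a bound on the sectional curvature of $\mathcal N$; since $\mathcal N$ is simply connected these are the only data entering $\rho$. Completeness and proper embeddedness enter through two facts: $M$ separates $\mathcal N$ (Jordan--Brouwer), so the mean-convex side $\Omega^+$ and the distance to $M$ are globally defined, and the distance from any interior point to $M$ is realised by an orthogonal minimising geodesic.

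First I would establish the local statement. Along a unit-speed normal geodesic $r \mapsto \exp_p(r\nu)$ the shape operators of the equidistant hypersurfaces satisfy a Riccati equation whose right-hand side is controlled by the curvature of $\mathcal N$; comparing with the constant-curvature model (Rauch/Riccati comparison, carried out in the Appendix) and using $|A(p)| \le \Lambda$ shows that no focal point occurs for $r < \rho_1$, where $\rho_1 = \rho_1(\Lambda,\kappa) > 0$. Consequently $\exp$ is a local diffeomorphism on $M \times (0,\rho_1)$, the equidistant hypersurfaces $M_r$ are smooth immersed hypersurfaces of uniformly bounded geometry, and combined with the uniform local chart $\exp|_{B_{r_0}(p)\times(-r_0,r_0)}$ this already rules out all \emph{short-range} self-intersections of the tube.

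The heart of the proof, and the step I expect to be the main obstacle, is to upgrade this to global injectivity, i.e.\ to exclude that two normal geodesics from far-apart points $p_1 \ne p_2 \in M$ reach a common point $x \in \Omega^+$ at radius $< \rho_2$. I would argue by a first-contact (rolling-ball) principle. Using $|A| \le \Lambda$ and $\kappa$, one shows an exterior ball condition: for some $\rho_2 = \rho_2(\Lambda,\kappa) > 0$ and each $p \in M$ the geodesic ball of radius $\rho_2$ tangent to $M$ at $p$ from the mean-convex side is disjoint from $M$. Suppose the half-tube of radius $\rho := \tfrac12\min(\rho_1,\rho_2)$ is not embedded, and let $r^\ast < \rho$ be the infimal radius at which two sheets of the equidistant family meet, realised at $x = \exp_{p_1}(r^\ast\nu) = \exp_{p_2}(r^\ast\nu)$ with $p_1 \ne p_2$. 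Minimality forces both geodesics to be minimising from $x$ to $M$, so the ball $B_{r^\ast}(x) \subset \Omega^+$ is tangent to $M$ at both $p_1$ and $p_2$. Comparing $B_{r^\ast}(x)$ with the larger exterior ball of radius $\rho_2 > r^\ast$ tangent to $M$ at $p_1$, the smaller ball is contained in the larger; but then $p_2 \in M$ lies in the interior of the larger ball, contradicting the exterior ball condition at $p_1$. Making this ball comparison rigorous in the curved, merely bounded-geometry ambient $\mathcal N$---controlling how geodesic spheres of different centres nest---is the delicate point, and is where the Appendix estimates are used.

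Finally, for the boundary case (2) I would fix a compact set $K \subset M$ containing a uniform collar of the compact boundary $\partial M$, and run the same argument on the complete-without-boundary piece $M \setminus K$; the presence of $\partial M$ only affects the exterior-ball step near the boundary, so the admissible radius $\rho'$ now also depends on $K$ (equivalently, on how far into $M$ one must go before the interior estimates take over), which is exactly the dependence asserted.
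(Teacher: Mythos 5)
Your Step--1 reduction (a focal/Riccati radius $\rho_1$ below which $\exp$ is a local diffeomorphism, so that only ``long-range'' self-intersections remain) matches the paper. The gap is in the global step. Your argument rests on a uniform \emph{exterior ball condition} of radius $\rho_2=\rho_2(\Lambda,\kappa)$: that the geodesic ball tangent to $M$ at $p$ on the mean-convex side is disjoint from \emph{all} of $M$. Bounds on $|A|$ and on the ambient curvature only give this \emph{locally} (points of $M$ in a fixed graph chart around $p$ stay outside the tangent ball); they say nothing about distant sheets of $M$ re-entering that ball. A uniform global exterior ball condition is essentially a positive lower bound on the reach of $M$, which is equivalent to the embeddedness of the tube you are trying to prove --- the argument is circular. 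The decisive symptom is that your proof never actually uses the hypothesis $H\ge \epsilon/n>0$ (the tangency-from-the-mean-convex-side remark does no work: a small tangent sphere has very \emph{large} mean curvature, so the tangency comparison at $p_2$ gives only $H\le n/r^{*}$, no contradiction). Yet the statement is false without that hypothesis: two parallel hyperplanes in $\Rn$ at distance $\delta$ have $|A|\equiv 0$ and bounded ambient geometry, but no uniform exterior ball of radius $>\delta$ exists on the facing sides. A second, independent gap: on a noncompact complete $M$ the ``first contact radius'' $r^{*}$ need not be attained --- two sheets can approach asymptotically at infinity without ever touching --- so there may be no tangency point at which to run your rolling-ball comparison.

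The paper's Step 2 is built precisely to close both gaps, and it is where $H\ge\epsilon/n$ enters. One writes the first-return sheet $P$ of $M$ inside the tube as a graph of a height function $\phi>0$ over a domain $\Omega\subset M$, with $\phi=\rho$ on $\partial\Omega$; the cheesebox argument of the Appendix gives $\|\phi\|_{C^1}=O(\rho)$. Because the normal geodesic from $(p,0)$ to $(p,\phi(p))$ stays in the mean-convex side, the mean curvature vector of $P$ points \emph{back} toward $M$, so in the graph orientation $H_P\le -\epsilon/n$ while $H\ge \epsilon/n$; the mean curvature equation $nH_P=nH+\Delta\phi+O(\rho^{\alpha})$ then forces $\Delta\phi\le -2\epsilon$. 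This differential inequality is confronted with the barrier $\psi_R=\beta\bigl(1-(r/R)^2\bigr)$ built from the distance function, which satisfies $\Delta\psi_R\ge-\epsilon$ for $R$ large by Laplacian comparison; the maximum principle (Corollary \ref{basic2}) gives $\phi\ge\psi_R$, and letting $R\to\infty$ yields $\phi\ge\rho$ on all of $\Omega$, contradicting $\phi<\rho$. Note that this comparison-at-infinity handles exactly the non-attained first contact: no touching point is ever needed. To repair your proof you would have to replace the exterior ball condition by this two-sided mean-curvature jump argument (or an equivalent quantitative use of $H\ge\epsilon/n$ valid at infinity).
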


   \begin{remark}   
\nero In the case where $M$ has no boundary and constant mean curvature, the existence of a tube of constant radius around $M$ was stated in \cite{MT} (see also  \cite{So} and \cite{NS}). We believe that it is necessary and useful to give a detailed proof of this result, especially in the more general setting that we are treating.\\
\nero  If $\mathcal{N}$ is not simply-connected, 
    assuming  in addition that $M$ is  strongly Alexandroff embedded (see Definition 3.3 in \cite{MT}), one obtains the same conclusion as in Theorem \ref{tubeplonge}.     
   \end{remark}        
    
\begin{center} \begin{figure}[h]
\hskip 2 in  \includegraphics[scale =  .2]{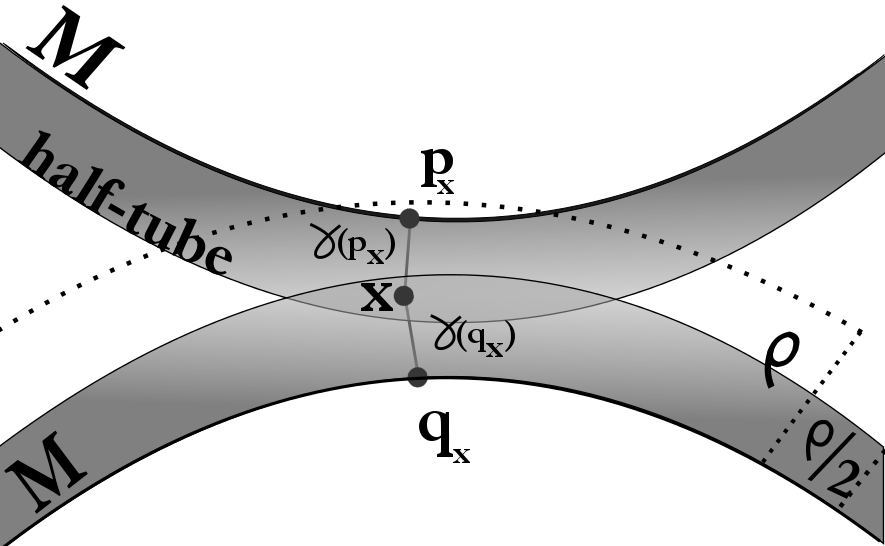}
\caption[]{  Self-intersection of a halftube of $M$ in  $\mathcal N$}
\end{figure}
 \end{center}

 \begin{proof}[ Proof of Theorem \ref{tubeplonge}]
 (1)  See Figure 1.
 Let $r_1>0 $  be such that for any $p\in M,$  $exp_{|B_{\rho}(p)\times(0,r_1)}$ is an isometry
  on  $B_{p}(\rho)\times(0,r_1)$  endowed with the pull-back metric.  The proof is in two steps.
In the first step, we will prove that if there is a  positive $\rho\leq r_1 $ such that    $T^+(\rho) \cap M= \emptyset$,  then 
   $T^+(\frac{\rho}{2})$ is embedded. In the second step we prove that, 
      for $\rho$ sufficiently small, we have  $T^+(\rho) \cap M = \emptyset.$\\
   \parindent=10pt 
  {\it Proof of Step 1.}   
 \parindent=0pt 
  Assume, by contradiction that $T^+(\frac{\rho}{2})$ is not embedded.  Then  there exist   $x\in T^+(\frac{\rho}{2})$   and $p_x\not=q_x\in M,$ such that  $x$ belongs to the geodesic $\gamma(p_x)$ starting at $p_x,$ orthogonal to $M$ and also belongs to the geodesic $\gamma(q_x)$ starting at $q_x,$ orthogonal to $M.$ Moreover $d(p_x,x)\leq \frac{\rho}{2}$ and 
   $d(q_x,x)\leq \frac{\rho}{2}$ where $d$ is the distance in $\mathcal N.$  By the  distance  inequality,  $d(p_x,q_x)\leq  \rho.$ Then $q_x$ is contained in a  geodesic ball  in $\mathcal N,$ centered at $p_x,$ of radius $\rho.$   As $\rho\leq r_1,$ there are $(p,t),$ $(q,s),$ $(x_0,l),$ contained in a geodesic ball   $B_{(p,t)}(\rho)$ of  $M\times(0,\rho]$ centered at $(p,t)$ of radius $\rho$   and such that 
   $exp(p,t)=p_x,$   $exp(q,s)=q_x,$  $exp(x_0,l)=x.$ Without loss of generality, we  can assume that 
   $t=0.$  If $s=0,$ then  both  geodesics $exp^{-1}(\gamma(p_x)),$   
   $exp^{-1}(\gamma(q_x)),$ would be orthogonal to $M\times\{0\}$ and would meet at    $(x_0,l)\in B_{(p,0)}(\rho),$ which  is contradiction  since  $exp$ is an isometry on  $B_{(p,0)}(\rho).$ Then $s\not=0$ and $q_x\not\in M.$ Contradiction.     Therefore $T^+(\frac{\rho}{2})$ is embedded. 

 \

\begin{center}   \begin{figure}[h]   %
\mbox{\hskip 1.7 in 
 \includegraphics[scale = .35]{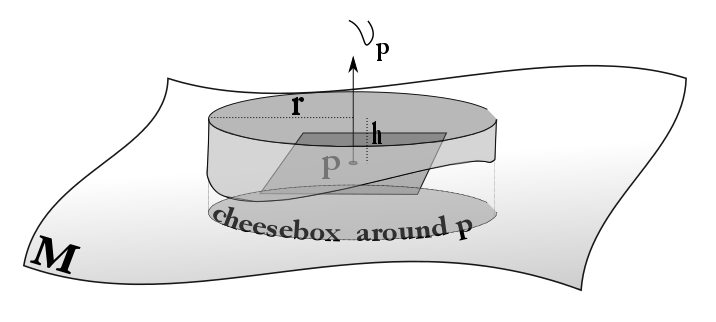}
 }
\caption[]{A cheesebox around $p\in M \subset \mathcal{N}$}
\end{figure}
\end{center}

\parindent=10pt 
{\it Proof of Step 2.}
 \parindent=0pt      
Assume by contradiction that  for any sufficiently small $\rho>0$, $T^+(\rho) \cap M\not= \emptyset.$  Let $\tilde p$ be a point in   $T^+(\rho) \cap M$ and let   
$(p,t)\in M\times(0,\rho]$ be the nearest point to $M\times \{0\}$  in $exp^{-1}(\tilde p)$ which is 
 on the geodesic starting at $(p,0)\in M\times\{0\}$  and orthogonal to $M.$  Define  a function $\phi$  by $\phi(p)=t$ and let $\Omega$ be the set of points $p$ of $ M$ such that $t\leq\rho.$    Since  $M$ is properly embedded, $\phi(p) >0$.

Let   $P:=\{(p,\phi(p)) \ | \ p\in\Omega\}.$   The fact that    $M$ is  embedded implies that $P\cap \left(M\times\{0\}\right)= \emptyset$.

We claim that  the function $\phi$ is bounded uniformly in $C^1(\Omega)$ and $||\phi||_{C^1(\Omega)}$ goes to zero as $\rho\longrightarrow 0.$ We prove these facts  by using the so called {\it cheesebox argument} (see  the details in Appendix \ref{cheese}, see also \cite{So}).\\

 We recall   the  cheesebox argument: 
Let $M$ be a complete  hypersurface with  bounded curvature  in  a manifold  $\mathcal N$ with  bounded curvature. Then, there exists   a   positive $\rho$  such that, for any point $p\in M $ (see Figure 2):  \\
 
 (a) The hypersurface $M$ is locally a graph $G_p$ of a function $\phi$ defined on a ball $D(p,\rho)$ of  $T_pM,$ centered at $p$ of radius 
 $\rho.$\\
(b) The graph $G_p$ is contained in  the cheesebox $D(p,\rho) \times [-h/2,h/2],$ of radius $\rho$ and height $ h = c\rho^2$  (where $c$ is a constant   depending    on   bounds of the curvatures of $M$ and $\mathcal N$). Moreover   $G_p$ cuts the boundary of the cheesebox  only on $ \partial D(p,\rho)\times [-c\rho^2,c\rho^2].$\\
 It is proved in  Appendix \ref{cheese} that properties (a) and (b) imply the following  estimate (see inequality \eqref{phi}):
 \begin{equation}\label{phi-estimate}
  ||\phi||_1 := \left(\sup_{p\in \Omega  } 
  | \phi|  + \sup_{p\in \Omega  } 
  |\nabla\phi| \right)   \leq O(\rho).
 \end{equation} \\
One concludes in particular  that  the tangent plane to $M\times\{0\}$ at $p$ and the tangent plane to $P$ at $p'$ are  as close as one wishes if $\phi$ is small.
From Appendix \ref{cmcequation}, $\phi$ satisfies a uniformly elliptic quasilinear 
partial differential equation (see equation \eqref{cmc30}). By a  classical    argument in elliptic theory, the derivatives of any order of $\phi $ are uniformly bounded (see for instance \cite{GT} Theorem 6.2 and  Problem 6.1).
Thus $||\phi||_{C^\infty(\Omega)}$ is uniformly bounded. \\

\

 We claim that the mean curvature vector  of $P$ points towards $M\times \{0\}.$
  Indeed,  let $(p,0)\in \Omega\times \{0\}$ and $(p,\phi(p))$ the corresponding point on $P.$  Let $\gamma$ be the geodesic, orthogonal 
to $M\times\{0\}$ joining $(p,0)$ to $(p,\phi(p)).$ The geodesic  
$\gamma$  in $\mathcal N$ does not intersect $M,$
 hence  it  is contained in the   mean convex side of $M.$ This implies  that the mean curvature of  $P$ 
at $\phi(p)$ points towards $M\times\{0\}.$ \\  
 From   equation \eqref{cmc30} in  Appendix \ref{cmcequation}, the mean curvature  $H_P$ of $P$ is given by \begin{equation}
\label{H-P}
 nH_P = nH + \Delta\phi + O(\rho^{\alpha})
 \end{equation}

  where the term $ O(\rho^{\alpha})$  (with $0<\alpha<1$)  
 converges  uniformly to zero as $\rho$ tends to zero 
provided  $M$ and $\mathcal N$ have uniform curvature bounds.\\
 Since  the mean curvature of $P$ points towards $M\times\{0\},$ the hypothesis    $H\geq \frac{\varepsilon}{n}$ implies  $H_P \leq -\frac{\varepsilon}{n}.$   Therefore,   for $\rho$ sufficiently small, one has
  
   \begin{equation}
\label{tube2}
   \Delta\phi\leq  -2\varepsilon
   \end{equation}

 Now we construct a function $\psi_R$ in terms of the distance function from a fixed point $p_0$ of $M,$ such that 
 $\Delta \psi_R\geq -\varepsilon$ on $B_{p_0}(R)\cap\Omega.$  
 Let $r$ be the distance function from $p_0 \in M,$ and consider the radial test function   $\psi_R (x)= f_R\circ r(x) $ where 
 
 \begin{equation}
f_R (r) = \left\{
\begin{array}{ll}
 \beta\left(  1 - \left(\frac{r}{R}\right)^2\right)\qquad   &   \quad \forall \  r \leq R, \\ 
0 \qquad   & \quad \forall \  r \geq  R. \\ 
\end{array}
\right.
\end{equation}
with  $\beta = \rho -\delta,$ for small positive $\delta.$  
 Notice that $\psi_R$ vanishes  on $\partial   B_{p_0}(R) \cap \Omega$ 
and  $\psi_R \leq \phi$ on $B_{p_0}(R) \cap \partial\Omega$, since $\phi|_{\partial \Omega} = \rho$.  Therefore $\psi_R \leq \phi $ on 
 $\partial (\Omega \cap B_{p_0}(R)) $. Since $M$ has bounded curvature, there exists $k>0,$ such that $Ric_M\geq -(n-1)k^2.$
By standard comparison theorems (see for instance  \cite{SY}) one has 
 
 \begin{equation}
 \label{tube3}
 \Delta r  \leq \frac{n-1}{r} (1+kr)
 \end{equation} 
where the inequality holds  outside the cut-locus of $M$ and   holds   in the weak sense at any point of $M.$  
 Using inequality    \eqref{tube3} and  that  $ \Delta f_R(r) = f_R'(r)\Delta r  + f_R''(r) |\nabla r|^2,$ one has   
\begin{equation}\label{tube4}
\Delta \psi_R \geq  -\frac{2\beta}{R^2}(n+ (n-1)kR).
 \end{equation}
 
For $R$  large,  $\Delta \psi_R \geq -\varepsilon,$ as we wished.  This last inequality with   inequality \eqref{tube2} yield    
$\Delta \phi \leq\Delta \psi_R $   on $  B_{p_0}(R)\cap \Omega,$ for  $R$ large. Then,  by    Corollary \ref{basic2},  
    $\phi \geq \psi_R $ on  $  B_{p_0}(R)\cap \Omega,$ for $R$ large. 
  Letting  $R \rightarrow \infty, $  we obtain  $\phi \geq \beta$ on $\Omega$. Therefore $\phi \geq \rho-\delta$ in $\Omega$ for any $\delta>0.$ 
  Thus  $\phi \geq \rho$ in $\Omega.$ 
This is a contradiction, hence   $T^+(\rho)$ is embedded.

(2) The proof is the same as in (1), except than for the choice of the test function $\psi_R.$ 
Without loss of generality, we can assume that  $\varepsilon <1$ and $\rho <1$. 
 Let  $R_1>1 $ be  such that $K\subset B_{p_0}(R_1-1)$  and let $\sigma := \underset{p\in \partial B_{p_0}(R_1-1)}{\inf}\phi(p)$ (notice that 
 $\sigma \leq \rho \leq 1$). 
Let  
  $\Omega' = \Omega \cap (M\setminus B_{p_0}(R_1))$ and let $r(x)$ be the distance function in $M$  from any fixed point $p_0$ in $\Omega'.$
 Define   $R$ to be the distance  from  $p_0\in\Omega'$   to $B_{R_1-1}$ (notice that $R>1$).
 We define a radial test function  $\psi_R = g_R\circ r$ where 
 
  \begin{equation}
  \label{tube5}
g_R (r) = \left\{
\begin{array}{ll}
 \beta'\left(  1 - \left(\frac{r}{R}\right)^2\right)\qquad   &   \quad \forall \  r \leq R, \\ 
0 \qquad   & \quad \forall \  r \geq  R. \\ 
\end{array}
\right.
\end{equation}
where the constant  $\beta' $ equals   $ \frac{\varepsilon}{2(n+(n-1)k)}
 (\sigma-\delta), $   where $\delta  $ is  small positive number 
and $k$ is such that  $Ric_M\geq -(n-1)k^2.$
 Notice that $\psi_R$ vanishes  on $\partial   B_{p_0}(R) \cap \Omega'$ 
and  $\psi_R \leq \phi$ on $B_{p_0}(R) \cap \partial\Omega'.$
 Therefore $\psi_R \leq \phi $ on 
 $\partial (\Omega' \cap B_{p_0}(R)). $ 
 
 Using inequality \eqref{tube3}, by a straigtforward computation one has
 
  \begin{equation}\label{tube6}
\Delta \psi_R \geq  - \varepsilon(\sigma-\delta)
\end{equation}

Since $0<\sigma -\delta <1$,  $\Delta \psi_R \geq -\varepsilon,$ as we wished.  This last inequality together  with   inequality \eqref{tube2} yield    
$\Delta \phi \leq\Delta \psi_R $   on $  B_{p_0}(R)\cap \Omega',$. Then,  by    Corollary \ref{basic2},  
    $\phi \geq \psi_R $ on  $  B_{p_0}(R)\cap \Omega',$ for $R$ large.  

 In particular $\phi(p_0) \geq \beta'$  for any $p_0\in  \Omega\setminus B_{p_0}(R_1)$
 This proves that the tube $T^+(\beta')(  \Omega\setminus B_{p_0}(R_1)$ is embedded.

 \end{proof}
   
\section{Caccioppoli's inequality for  constant mean curvature hypersurfaces with finite index}   
 \label{caccioppoli}

In this section we recall some results obtained in \cite{INS} and needed in the sequel. We assume that ${\mathcal N}$ is an  orientable Riemannian manifold with bounded sectional curvature. Moreover  let $M$  be an orientable  hypersurface  immersed in ${\mathcal N}$ and assume that $M$ 
has  constant mean curvature. When the mean curvature is non zero,  we  orient $M$ by its mean curvature vector $\vec H.$ Then $\vec H=H\vec \nu$   with $H$  positive  and $\vec\nu$ a unit normal vector.    When the mean curvature is zero, we choose once for all an orientation $\vec\nu$ on $M.$ We denote by  $\varphi$ the length of the    traceless part of the second fundamental form $A$ i.e. $\varphi := |A - Hg|.$
In the present article we will need the 
following Caccioppoli's inequalities (Theorems 5.1 and 5.4 of  \cite{INS}) 
 and a reverse H\"{o}lder inequality (Theorem 4.2  \cite{INS}).

\begin{theorem} [{\bf Caccioppoli's inequality of type I}]
\label{caccio-theo}
Let $M$ be a complete hypersurface immersed with constant mean curvature $H$ in a manifold $\mathcal N.$  Assume $M$  has finite index. 
Then, there exist a compact subset $K$ of $M$ ( which is empty if $M$ is stable)   and   constants $\beta_1,$  $\beta_2,$  $\beta_3,$ such that for every $f\in C_0^{\infty}(M\setminus K)$ and $x\geq 1$
 \begin{equation}
\label{i3}
\beta_1\int_{M\setminus K} f^{2x+2}\varphi^{2x+2} \leq \beta_2 \int_{M\setminus K}  |\nabla f|^{2x+2}+\beta_3\int_{M\setminus K}  f^{2x+2}.
\end{equation}
Moreover  the constant $\beta_1$ is positive  if and only if $x\in[\left.1,1+\sqrt{\frac{2}{n}}\right),$  \end{theorem}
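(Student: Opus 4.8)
The plan is to establish the Caccioppoli inequality \eqref{i3} by exploiting the stability (or finite-index) hypothesis through the stability operator $L=\Delta+\mathrm{Ric}(\nu,\nu)+|A|^2$, testing the stability inequality $Q(g,g)\geq 0$ with a carefully chosen function $g$ built from $f$ and a power of $\varphi$. Since $M$ has finite index, by the discussion recalled after the definition of stability (Proposition 2.1 of \cite{INS}), there exists a compact $K$ such that $M\setminus K$ is stable; all test functions will be supported in $M\setminus K$, which is why $K$ appears in the statement and is empty in the stable case. The guiding idea is that the stability inequality controls $\int |A|^2 g^2$ by $\int|\nabla g|^2$ plus curvature terms, and that the traceless second fundamental form $\varphi$ satisfies a Simons-type differential inequality which, after integration by parts, feeds the $\varphi^{2x+2}$ term on the left of \eqref{i3}.

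Concretely, I would first set $g=f\varphi^{x}$ (or a regularized version to handle the zeros of $\varphi$) and insert it into $Q(g,g)\geq 0$. Expanding $|\nabla(f\varphi^{x})|^{2}$ produces a cross term $\int f\varphi^{2x-1}\langle\nabla f,\nabla\varphi\rangle$ and a term $x^{2}\int f^{2}\varphi^{2x-2}|\nabla\varphi|^{2}$; the key analytic input is the refined Kato inequality for $\varphi$ on a constant-mean-curvature hypersurface, namely $|\nabla A|^{2}\geq (1+\tfrac{2}{n})|\nabla\varphi|^{2}$, which is precisely what forces the admissible range of the exponent. The Simons identity gives $\varphi\Delta\varphi\geq |\nabla\varphi|^{2}-|\nabla A|^{2}-c|A|^{4}-(\text{curvature})\,\varphi^{2}$, so that combining it with the stability inequality and the refined Kato inequality yields, after absorbing the gradient-of-$\varphi$ terms, a differential inequality whose integrated form is \eqref{i3}. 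The curvature of $\mathcal N$ being bounded furnishes the constant $\beta_3$ multiplying $\int f^{2x+2}$, while $\beta_2$ collects the constants from integration by parts and Young's inequality applied to the cross term.

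The coefficient $\beta_1$ of the leading term is a quadratic expression in $x$ coming from the competition between the positive contribution of the refined Kato constant $1+\tfrac{2}{n}$ and the negative contributions generated when the $|\nabla\varphi|^{2}$-terms are absorbed via Young's inequality. I would track this coefficient explicitly: it has the form $\beta_1 = a - b(x-1)^{2}$ type expression, or more precisely a quadratic in $x$ that is positive exactly on the interval $\left[1,1+\sqrt{\tfrac{2}{n}}\right)$. This is the content of the final sentence of the theorem, and verifying the sharp endpoint $1+\sqrt{\tfrac{2}{n}}$ is where the refined Kato constant enters decisively — the threshold is governed by when the discriminant-type condition on the quadratic in $x$ changes sign.

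The main obstacle I anticipate is the careful bookkeeping of the absorption step: one must apply Young's inequality to the cross terms $\int f\varphi^{2x-1}\langle\nabla f,\nabla\varphi\rangle$ and $\int f^{2}\varphi^{2x-2}|\nabla\varphi|^{2}$ with a free parameter, then use the refined Kato inequality to convert $|\nabla A|^{2}$ into $|\nabla\varphi|^{2}$, and finally choose the Young parameter so that all $|\nabla\varphi|^{2}$-terms cancel while keeping the coefficient of $\int f^{2x+2}\varphi^{2x+2}$ positive. Making the constant $\beta_1$ come out as a sharp quadratic with the stated root $1+\sqrt{\tfrac{2}{n}}$ requires that this choice be optimal, so the delicate part is showing that no slack is lost and that the endpoint of the interval is genuinely attained. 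Since this is recalled from \cite{INS} (Theorems 5.1 and 5.4), I would ultimately defer the full computation to that reference, presenting here only the structure of the argument and the role of the refined Kato inequality in pinning down the range of $x$.
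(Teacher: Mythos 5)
The paper itself gives no proof of this theorem: Section \ref{caccioppoli} explicitly \emph{recalls} it as Theorems 5.1 and 5.4 of \cite{INS}, so your decision to present only the structure of the argument and defer the computation to that reference is exactly consistent with what the authors do (indeed your sketch is more detailed than the paper's treatment). Your outline also identifies the correct ingredients used in \cite{INS}: stability of $M\setminus K$ via Proposition 2.1 of \cite{INS}, the Simons-type inequality for $\varphi$, the refined Kato inequality whose constant $1+\tfrac{2}{n}$ is what produces the endpoint $1+\sqrt{\tfrac{2}{n}}$, and Young's inequality to absorb the cross terms, with $\beta_1$ emerging as a quadratic in $x$ and $\beta_3$ coming from the ambient curvature bounds.

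One concrete correction: the test function $g=f\varphi^{x}$ cannot yield inequality \eqref{i3} as stated. With that choice the stability inequality produces $\int f^{2}\varphi^{2x+2}$ on the left and a term $\int \varphi^{2x}|\nabla f|^{2}$ on the right, and the latter cannot be split by Young's inequality into a multiple of $\int f^{2x+2}\varphi^{2x+2}$ plus $\int|\nabla f|^{2x+2}$, because the factor $\varphi^{2x}$ carries no power of $f$ to pair with. The homogeneity of \eqref{i3} --- every term of total degree $2x+2$ in the pair $(f,\nabla f)$ and in $(\varphi,\nabla\varphi)$ --- forces the test function to be of the form $\varphi^{x}f^{x+1}$ (up to regularization at the zeros of $\varphi$); then the gradient expansion gives $(x+1)^{2}\int \varphi^{2x}f^{2x}|\nabla f|^{2}$, which Young's inequality with conjugate exponents $\tfrac{x+1}{x}$ and $x+1$ splits as $\varepsilon\int f^{2x+2}\varphi^{2x+2}+C_{\varepsilon}\int|\nabla f|^{2x+2}$, producing exactly the right-hand side of \eqref{i3}. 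You should also note that the ``only if'' direction of the final claim about $\beta_1$ requires exhibiting the explicit quadratic expression for $\beta_1(x)$ and checking where it changes sign, not merely verifying positivity on $\left[1,1+\sqrt{\tfrac{2}{n}}\right)$; this too is carried out in \cite{INS}.
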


 \begin{theorem}
 \label{reduction-exponent}
 Let $M$ be a complete hypersurface immersed with constant mean curvature $H$ in a manifold with constant sectional curvature $c.$  Assume $M$ has finite index. Then there exists  a compact subset $K$ of $M$  ( which is empty if $M$ is stable)  and a positive  constant $\mathcal S$ such that for any $x\in\left.[1,1+\sqrt{\frac{2}{n}}\right)$
\begin{align}
\label{reduction-ineq}
\int_{M\setminus K} \varphi^{2x+2}\leq {\mathcal S} \int_{M\setminus K} \varphi^{2x}
\end{align}
\end{theorem}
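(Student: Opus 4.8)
The plan is to play a Simons-type inequality for $\varphi$ against the stability inequality on the end of $M$, tested against a power of $\varphi$ times a cutoff. Since $M$ has finite index, by the fact recalled in Section~\ref{stability} there is a compact $K$ with $M\setminus K$ stable, and the constant curvature $c$ of the ambient space form lets me write Simons' identity cleanly. First I would record the Simons inequality for a constant mean curvature hypersurface of $\mathcal N$: writing $\phi=A-Hg$ for the traceless part (so $\varphi=|\phi|$) and invoking the refined Kato inequality $|\nabla\phi|^2\geq(1+\tfrac2n)|\nabla\varphi|^2$, one obtains, away from the zero set of $\varphi$,
\begin{equation*}
\varphi\,\Delta\varphi\ \geq\ \tfrac{2}{n}\,|\nabla\varphi|^2 + b_0\,\varphi^2 - c_2\,\varphi^3 - \varphi^4 ,
\end{equation*}
where $b_0=n(c+H^2)$ and $c_2\geq 0$ depends only on $n,H$. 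The decisive features are the Kato coefficient $\tfrac2n$ and the fact that the leading algebraic term is exactly $-\varphi^4$; these two numbers are precisely what will cut out the stated range of $x$.

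Next I would fix a cutoff $\eta\in C_0^{\infty}(M\setminus K)$ with $\eta\equiv1$ on a large ball $B_R$ (kept away from $K$) and $|\nabla\eta|\leq C/R$, and test the stability inequality $\int(\mathrm{Ric}(\nu,\nu)+|A|^2)g^2\leq\int|\nabla g|^2$ with $g=\varphi^{x}\eta$. Since $\mathrm{Ric}(\nu,\nu)+|A|^2=\varphi^2+b_0$ on the space form, this gives
\begin{equation*}
\int\varphi^{2x+2}\eta^2\ \leq\ x^2\!\int\varphi^{2x-2}|\nabla\varphi|^2\eta^2 + 2x\!\int\varphi^{2x-1}\eta\,\nabla\varphi\!\cdot\!\nabla\eta + \int\varphi^{2x}|\nabla\eta|^2 - b_0\!\int\varphi^{2x}\eta^2 .
\end{equation*}
Separately, multiplying the Simons inequality by $\varphi^{2x-2}\eta^2$ and integrating by parts controls the gradient integral by
\begin{equation*}
\Big(2x-1+\tfrac2n\Big)\!\int\varphi^{2x-2}|\nabla\varphi|^2\eta^2\ \leq\ \int\varphi^{2x+2}\eta^2 + c_2\!\int\varphi^{2x+1}\eta^2 + |b_0|\!\int\varphi^{2x}\eta^2 + (\text{cross}) .
\end{equation*}
Substituting the second estimate into the first, the integral $\int\varphi^{2x+2}\eta^2$ reappears on the right with coefficient $x^2/(2x-1+\tfrac2n)$, which is strictly less than $1$ exactly when $(x-1)^2<\tfrac2n$, i.e. $x\in[1,1+\sqrt{2/n})$. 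In that range the offending term can be moved to the left and absorbed.

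After absorption the surviving quantities are $\int\varphi^{2x+1}\eta^2$, $\int\varphi^{2x}\eta^2$, the $\nabla\eta$ cross terms, and $\int\varphi^{2x}|\nabla\eta|^2$. I would dispose of the cross terms and of $\int\varphi^{2x+1}\eta^2$ by Young's inequality, $\varphi^{2x+1}\leq\delta\,\varphi^{2x+2}+C_\delta\,\varphi^{2x}$, where the strictness in the absorption condition leaves a quantitative margin to reabsorb the small $\delta$-multiple of $\int\varphi^{2x+2}\eta^2$. One is then left with
\begin{equation*}
\int_{B_R}\varphi^{2x+2}\ \leq\ C\int_{M\setminus K}\varphi^{2x} + \frac{C}{R^2}\int_{M\setminus K}\varphi^{2x} .
\end{equation*}
If $\int_{M\setminus K}\varphi^{2x}=\infty$ the claim is trivial; otherwise letting $R\to\infty$ annihilates the last term and yields $\int_{M\setminus K}\varphi^{2x+2}\leq\mathcal{S}\int_{M\setminus K}\varphi^{2x}$ with $\mathcal{S}=C$. (The reverse H\"older inequality of \cite{INS} is what guarantees the finiteness of $\int_{M\setminus K}\varphi^{2x}$ making the statement non-vacuous.)

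I expect the main obstacle to be the bookkeeping in the absorption step: one must verify that the \emph{total} coefficient of $\int\varphi^{2x+2}\eta^2$ arising from the principal substitution, from the Young split of $\int\varphi^{2x+1}\eta^2$, and from the $\nabla\eta$ cross terms remains below $1$, which succeeds only because $x<1+\sqrt{2/n}$ supplies a strict, quantitative gap. A secondary technical point is the non-smoothness of $\varphi$ on $\{\varphi=0\}$, handled in the standard way by replacing $\varphi$ with $(\varphi^2+\epsilon^2)^{1/2}$ and letting $\epsilon\to0$, together with the need to keep $\eta$ supported in $M\setminus K$ so that the stability inequality is available.
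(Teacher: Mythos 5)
The paper does not actually prove Theorem \ref{reduction-exponent}: it is imported verbatim from \cite{INS} (Theorem 4.2 there), so there is no internal proof to compare against. Your argument is, however, precisely the Schoen--Simon--Yau-type mechanism underlying the proof in \cite{INS} --- the paper itself alludes to ``inequality (58) of the proof of Theorem 4.2 in \cite{INS}'', i.e.\ the stability inequality on $M\setminus K$ tested against $\varphi^{x}\eta$ played off against Simons' identity --- and the two constants you isolate are the right ones: the refined Kato coefficient $\tfrac2n$ and the coefficient $1$ of the $-\varphi^{4}$ term yield the absorption condition $x^{2}<2x-1+\tfrac2n$, i.e.\ $(x-1)^{2}<\tfrac2n$, which is exactly the stated range $x\in[1,1+\sqrt{2/n})$; the lower-order terms $c_{2}\varphi^{3}$ and $n(c+H^{2})\varphi^{2}$ are correctly disposed of by Young's inequality at the cost of enlarging $\mathcal S$. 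Two small points to tidy up. First, since $\eta$ must vanish near $K$, the compact set in the conclusion is a slight enlargement $K'$ of the set off which $M$ is stable, and your final estimate reads $\int_{M\setminus K'}\varphi^{2x+2}\leq C\int_{M\setminus K}\varphi^{2x}$; converting the additive contribution of $\int_{K'\setminus K}\varphi^{2x}$ into a multiplicative constant over $M\setminus K'$ requires the (harmless) case distinction according to whether $\varphi$ vanishes identically outside $K'$. Second, your constant degenerates as $x\to1+\sqrt{2/n}$, so $\mathcal S$ depends on $x$; the theorem's quantifier order suggests a single $\mathcal S$ for the whole range, but since the paper only ever invokes the result at finitely many values of $x$ (e.g.\ $x=1$ and $x=\tfrac32$ in Corollary \ref{cortysk}), this is a matter of reading rather than a gap. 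With those caveats your proposal is a correct reconstruction of the cited proof.
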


It is  worthwhile  to note  that, if $M$ is stable,  using a  suitable test function  in inequality (58)  of  the proof of Theorem 4.2 in \cite{INS},  
one can deduce

\begin{equation}
\label{n-red-exp}
\int_{B_{p_0}(R)}\varphi^{2x+2}\leq{\mathcal S}\int_{B_{p_0}(R+1)}  \varphi^{2x}
\end{equation}
 
 where  $p_0$ is a fixed point of $M$ and $\mathcal S$ is a positive constant. 
 
 Before stating  the  next Theorem, we need two define new notations:

\nero  For $\gamma=\frac{n-2}{n},$ $\mu=\frac{n^2}{4(n-1)}, $   let $g$ be the following function
\begin{equation}
\label{function-g}
g_n(x)=\frac{(2x-\gamma)^2-x^4}{(2x-\gamma)^2-\mu x^4}
\end{equation}

\nero Let  $x_2$ the following real number

\begin{equation}
\label{roots}
  x_2=\frac{2\sqrt{n-1}}{n} \left(1+\sqrt{1-\frac{n-2}{2\sqrt{n-1}}}\right)
\end{equation}

\begin{theorem} [{\bf Caccioppoli's inequality of type III - $H\not=0$}]
\label{caccio-H}
Let $M$ be a complete hypersurface immersed with constant mean curvature $H\not=0,$ in a manifold with  constant   curvature $c.$ Assume  $M$ has finite index  and $n\leq 5.$   
Then there exist  a compact subset $K$ in $M$  and a constant $\gamma$ such that,  for any $f\in C^{\infty}_0(M\setminus K)$  
\begin{equation}
\label{j-bis}
\gamma\int_{M\setminus K} f^2 \varphi^{2x}\leq {\mathcal D}\int_{M\setminus K} \varphi^{2x}|\nabla f|^2
\end{equation}
provided either 
 (1)  $c=0$ or $1,$  $x\in[1,x_2)$ or 
(2)   $c=-1,$  $\varepsilon>0,$  $x\in[1, x_2-\varepsilon],$ $H^2\geq g_n(x).$
\end{theorem}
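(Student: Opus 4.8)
The plan is to play the stability inequality on the stable end of $M$ against the Simons identity for the traceless second fundamental form $\Phi:=A-Hg$, testing with $f\varphi^{x}$, and to eliminate the two ``bad'' quantities $\int f^{2}\varphi^{2x-2}|\nabla\varphi|^{2}$ and $\int f^{2}\varphi^{2x+2}$ between the resulting inequalities. Since $M$ has finite index, Proposition 2.1 of \cite{INS} provides a compact $K\subset M$ with $M\setminus K$ stable, so $\int_{M\setminus K}(Ric(\nu,\nu)+|A|^{2})\psi^{2}\le\int_{M\setminus K}|\nabla\psi|^{2}$ for all $\psi\in C_{0}^{\infty}(M\setminus K)$. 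In a space form of curvature $c$ one has $Ric(\nu,\nu)=nc$ and $|A|^{2}=\varphi^{2}+nH^{2}$. Abbreviating (all integrals over $M\setminus K$) $I=\int f^{2}\varphi^{2x-2}|\nabla\varphi|^{2}$, $J=\int f^{2}\varphi^{2x}$, $C=\int f\varphi^{2x-1}\langle\nabla f,\nabla\varphi\rangle$ and $G=\int\varphi^{2x}|\nabla f|^{2}$, the choice $\psi=f\varphi^{x}$ turns stability into
\begin{equation*}
\int f^{2}\varphi^{2x+2}+(nc+nH^{2})J\le G+2xC+x^{2}I.\tag{S}
\end{equation*}

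For the second ingredient I would use the Simons identity $\tfrac12\Delta\varphi^{2}=|\nabla\Phi|^{2}+nc\,\varphi^{2}-\varphi^{4}+nH\,\mathrm{tr}\,\Phi^{3}$ for constant mean curvature hypersurfaces of a space form, together with the refined Kato inequality $|\nabla\Phi|^{2}\ge(1+\tfrac2n)|\nabla\varphi|^{2}$ and the algebraic estimate $|\mathrm{tr}\,\Phi^{3}|\le\frac{n-2}{\sqrt{n(n-1)}}\varphi^{3}$. These give the pointwise bound $\varphi\Delta\varphi\ge\frac2n|\nabla\varphi|^{2}+nc\,\varphi^{2}-b|H|\varphi^{3}-\varphi^{4}$ with $b=\frac{n(n-2)}{\sqrt{n(n-1)}}$. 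Multiplying by $f^{2}\varphi^{2x-2}$ and integrating by parts (which is exactly what produces the factor $2x-\gamma$, $\gamma=\frac{n-2}{n}$, and the same cross term $C$) yields
\begin{equation*}
(2x-\gamma)\,I\le -2C-nc\,J+b|H|\!\int f^{2}\varphi^{2x+1}+\int f^{2}\varphi^{2x+2}.\tag{Si}
\end{equation*}

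The heart of the matter is to form $a\cdot(\mathrm{Si})+(\mathrm{S})$ and choose $a\in(\lambda,1)$, $\lambda:=\frac{x^{2}}{2x-\gamma}$, so that the coefficient $a(2x-\gamma)-x^{2}$ of $I$ is positive and the coefficient $1-a$ of $\int f^{2}\varphi^{2x+2}$ is nonnegative; both require $\lambda<1$, i.e. $x<1+\sqrt{2/n}$. The surviving cross term $2(x-a)C$ is absorbed into the positive $I$-term by completing the square against $G$; the cubic integral is bounded by Cauchy--Schwarz, $\int f^{2}\varphi^{2x+1}\le J^{1/2}\big(\int f^{2}\varphi^{2x+2}\big)^{1/2}$, and then by Young's inequality against $J$ and $\int f^{2}\varphi^{2x+2}$ so as to cancel the latter. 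What remains is an inequality $\kappa\,J\le\mathcal D\,G$ with $\kappa=nc(1+a)+nH^{2}\big(1-\frac{a^{2}b^{2}}{4n(1-a)}\big)$, and the statement is the assertion that $a$ can be chosen with $\kappa>0$. For $c=0,1$ the term $nc(1+a)\ge0$, so positivity holds on a range $[1,x_{\ast})$; for $c=-1$ the negative term $-n(1+a)$ must be dominated, which becomes a lower bound $H^{2}\ge g_{n}(x)$. In the sharp form of this bookkeeping the admissible range is exactly $[1,x_{2})$, where $x_{2}$ is the larger root of $(2x-\gamma)^{2}-\mu x^{4}=0$, $\mu=\frac{n^{2}}{4(n-1)}$, i.e. the vanishing of the denominator of $g_{n}$; and the restriction $x\le x_{2}-\varepsilon$ keeps that denominator away from $0$, so that the constants in the final inequality stay finite.

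The role of the dimension bound is transparent from this quadratic: at $x=1$ one computes $(2x-\gamma)^{2}-\mu x^{4}=\frac{(n+2)^{2}}{n^{2}}-\frac{n^{2}}{4(n-1)}$, which is positive precisely for $n\le5$; thus $n\le5$ is exactly the condition that $1<x_{2}$ and that the range $[1,x_{2})$ is non-empty. The step I expect to be the real obstacle is the simultaneous control of the cross term $C$ and of the two top-order integrals: the parameters available suffice to neutralize $I$ and $\int f^{2}\varphi^{2x+2}$ but not also $C$, so a crude completion of the square delivers only a sufficient, \emph{non-sharp} threshold. Recovering the sharp constant $\mu=\frac{n^{2}}{4(n-1)}$, and hence the precise endpoint $x_{2}$ and the exact function $g_{n}$, requires a more careful handling of the gradient terms --- retaining the full $|\nabla\Phi|^{2}$ rather than only $|\nabla\varphi|^{2}$, and treating the cubic term without slack --- and this optimal bookkeeping is where the delicate part of the argument resides.
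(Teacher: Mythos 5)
First, a point of reference: this paper does not actually prove Theorem \ref{caccio-H}. Section \ref{caccioppoli} explicitly imports it as Theorem 5.4 of \cite{INS}, so there is no internal proof to compare against. Judged against the argument in that source, your strategy is the right one and essentially the same: pass to the stable end $M\setminus K$ furnished by finite index, test the stability inequality with $f\varphi^{x}$, pair it with the integrated Simons inequality for the traceless second fundamental form (which is where $2x-\gamma$ and the Kato constant $2/n$ come from), and eliminate $\int f^{2}\varphi^{2x-2}|\nabla\varphi|^{2}$ and $\int f^{2}\varphi^{2x+2}$ by a choice of auxiliary parameters. Your identification of $x_{2}$ in \eqref{roots} as the larger root of $(2x-\gamma)^{2}-\mu x^{4}=0$, and of $n\le 5$ as exactly the condition $x_{2}>1$, is correct.

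The gap is the one you flag yourself, and it is not cosmetic, because the statement asserts the inequality on the precise range $[1,x_{2})$ and with the precise threshold $H^{2}\ge g_{n}(x)$ of \eqref{function-g}; your bookkeeping, as presented, reaches neither. Two concrete points. (i) Your Simons identity omits the term $nH^{2}\varphi^{2}$: for a CMC hypersurface of a space form one has $\tfrac12\Delta\varphi^{2}=|\nabla\Phi|^{2}+\varphi^{2}\left(nc+nH^{2}\right)-\varphi^{4}-nH\,\mathrm{tr}\,\Phi^{3}$ (up to the sign convention on the cubic term). That $nH^{2}\varphi^{2}$ contribution enters the $J$-coefficient with weight $a$ in your combination and is part of what makes the $H^{2}$-dependence close up into the ratio $g_{n}(x)=\frac{(2x-\gamma)^{2}-x^{4}}{(2x-\gamma)^{2}-\mu x^{4}}$; without it the $c=-1$ case cannot produce the stated threshold. (ii) Estimating the cross term $C$ by Cauchy--Schwarz against $G$ and the cubic term by Young against $J$ and $\int f^{2}\varphi^{2x+2}$, with the single free parameter $a\in(\lambda,1)$, yields a positivity condition on your $\kappa$ that is strictly stronger than $(2x-\gamma)^{2}>\mu x^{4}$; the sharp constant $\mu=\frac{n^{2}}{4(n-1)}$ is obtained only by optimizing jointly over the weights in the Kato inequality, the cubic estimate $|\mathrm{tr}\,\Phi^{3}|\le\frac{n-2}{\sqrt{n(n-1)}}\varphi^{3}$ and the square completions, and that optimization is the actual content of the proof in \cite{INS}. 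As written, your argument establishes a correct but strictly weaker Caccioppoli inequality (smaller admissible range of $x$, larger lower bound on $H^{2}$), not the statement as given.
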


   
\section{Entropies}
   \label{entropy-section}
Let $\mathcal N$ be a complete, noncompact Riemannian manifold. In this section, except when it is indicated, there is no hypothesis on the curvature of $\mathcal{N}$.\\
In this  Section, we deal with the exponential growth of various functionals on $\mathcal{N}.$ As we explained  in the Introduction, 
we will call such exponential  growths, {\it  entropies}.

The most important one is the entropy associated to the volume of geodesic balls in ${\mathcal N}$ (see for instance \cite{Br}, \cite{Br1}).

\begin{definition}  Let $w$ be a positive non-decreasing function.  The entropy of $w$  is by definition
 
 \begin{equation}\label{entropy}
\mu_w := \underset{r\lra\infty}{\limsup} \left(\frac{\ln w(r)}{r}\right).  
 \end{equation}
 
\end{definition}

We say that     the function $w$ has {\it subexponential growth}    if its entropy is zero. It is   worth  noting   that  $\mu_w =0$    is equivalent to

\begin{equation}\label{sub-exponential}
\limsup_{r\lra\infty} \frac{ w(r)}{e^{\alpha r}}= 0, 
  \quad  \forall \alpha >0. \end{equation}
  
We say that    $w$ has {\it  exponential growth  } if  its entropy is positive. 

We observe  that    having a  subexponential  growth is a  assumption weaker than   being   bounded by a polynomial of any degree (for instance  $w(r)=e^{r^{\beta}},$ $\beta<1$ has subexponential growth).
Let $B_{\sigma}^{\mathcal N}(R)$ be a geodesic ball in $\mathcal N,$ of radius $R,$ centered at a fixed point   
 $\sigma\in \mathcal N$  and denote  by 
$|B_{\sigma}^{\mathcal N}(R)|$ its  volume. When $w(R)=|B_{\sigma}^{\mathcal N}(R)|,$ the entropy of $w$ is called the {\em volume entropy} of 
$\mathcal N$ and it is denoted by 

\begin{equation}
\label{volume-entropy}
\mu_{\mathcal N}:= \underset{R\lra\infty}{\limsup} \left(\frac{\ln |B_{\sigma}^{\mathcal N}(R)| }{R}\right).
\end{equation}

Using the distance inequality, one can easily check  that the volume entropy does not depend on the center $\sigma$ of the balls.\\
\begin{remark}
In   the definition of entropies, one can take $\liminf$ instead of $\limsup$. 
\end{remark}
Let $M$ be  a  complete, noncompact manifold immersed in $\mathcal N.$ Let $g$ (respectively $A$)  be the induced metric (respectively the second fundamental form)  of $M$ and consider the traceless second fundamental form of $M,$ i.e.  $\phi=A- Hg,$ where $  H$ is the mean curvature   of $M.$ 

In addition to the volume entropy of $M,$ we will be interested in  the following entropies. 

\nero  {\em  Extrinsic volume entropy}  of $M.$  It is denoted by $\mu_{M}^{\mathcal N}$ and  is obtained by replacing, in  the definition of the volume entropy,  the volume of the intrinsic balls of $M$ by  the volume in $M$ of $B_{\sigma}^{\mathcal N}(R)\cap M_{\sigma}$  where $M_{\sigma}$ is the connected component of $M$ containing ${\sigma}$.\\
As for  the volume entropy, the  extrinsic volume entropy doesn't depend on the choice of ${\sigma}$.\\
\nero {\em Total $p$-curvature entropy} of $M.$ It is denoted by  $\mu_{{\mathcal T}_p},$  for any $p>0$  and it is the entropy of the    {\it total $p$-curvature} ${\mathcal T}_p$  defined by   
\begin{equation}
\label{p-entropy}
{\mathcal T}_{p}(R)=\int_{B_{\sigma}(R)} |\phi|^{p}
\end{equation}

 There are  important relations between the volume entropy of  a manifold ${\mathcal N}$ and the bottom of its  {\em essential spectrum}.  
Let $\Delta$ be the Laplacian  on $\mathcal N,$ then the bottom of the  spectrum $\sigma(\mathcal N)$ of $-\Delta$ is

\begin{equation}
\label{lambda-zero}
\lambda_0({\mathcal N})=\inf\{\sigma(\mathcal N)\}=\underset{f\not=0}{\underset{f\in C_0^{\infty}({\mathcal N})}{\inf}}\left(\frac{\int_{\mathcal N}|\nabla f|^2}
{\int_{\mathcal N} f^2}\right).
\end{equation}

The bottom of the essential spectrum $\sigma_{ess}(\mathcal N)$ of $-\Delta$ is 

\begin{equation}
\label{lambda-ess}
\lambda^{ess}_0({\mathcal N})=\inf\{\sigma_{ess}(\mathcal N)\}=\underset{K}{\sup} \lambda_0({\mathcal N}\setminus K)
\end{equation}

where $K$ runs through all compact subsets of $\mathcal N.$

Another invariant related to  the spectrum is the Cheeger isoperimetric constant. Recall that the Cheeger constant $h_{\mathcal N}$ of a Riemannian manifold $\mathcal N$ is defined as
$h_{\mathcal N}=\underset{\Omega}{\inf}\frac{|\partial \Omega|}{|\Omega|},$  where $\Omega$ runs  over all compact domains of $\mathcal N$ with piecewise smooth boundary $\partial \Omega$ .

J. Cheeger \cite{Ch}
 and R. Brooks   \cite{Br} proved the following important comparison result  between the bottom of the essential spectrum, the volume entropy and the Cheeger constant:
 
\begin{theorem}[{\bf Brooks-Cheeger's  Theorem}]
\label{brooks}
  If $\mathcal N$ has infinite volume, then 

\begin{equation}
\label{brooks-ineq}
\frac{h_{\mathcal N}^2}{4}\leq \lambda_0(\mathcal N)\leq \lambda_0({\mathcal N}\setminus K)\leq \lambda_0^{ess}({\mathcal N})\leq \frac{\mu_{\mathcal N}^2}{4}
\end{equation}
where $K$ is any compact subset of $\mathcal N$ and 
 $h_{\mathcal N}$ is the  Cheeger constant of  $\mathcal N.$ 
\end{theorem}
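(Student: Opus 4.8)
The inequality to be established is a chain
$$\frac{h_{\mathcal N}^2}{4}\leq \lambda_0(\mathcal N)\leq \lambda_0({\mathcal N}\setminus K)\leq \lambda_0^{ess}({\mathcal N})\leq \frac{\mu_{\mathcal N}^2}{4},$$
and the plan is to treat the four inequalities separately, since each has its own character. The two middle inequalities are essentially tautological from the definitions given in \eqref{lambda-zero} and \eqref{lambda-ess}: the middle one, $\lambda_0({\mathcal N})\leq\lambda_0({\mathcal N}\setminus K)$, follows because the Rayleigh quotient in \eqref{lambda-zero} is an infimum over $C_0^\infty(\mathcal N)$, and restricting test functions to those supported in ${\mathcal N}\setminus K$ can only raise the infimum; the third, $\lambda_0({\mathcal N}\setminus K)\leq\lambda_0^{ess}({\mathcal N})$, is immediate from the definition \eqref{lambda-ess} of $\lambda_0^{ess}$ as the supremum over all compact $K$. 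So the real content lies in the first and last inequalities.

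For the first inequality $h_{\mathcal N}^2/4\leq\lambda_0({\mathcal N})$ (the Cheeger inequality), the plan is the classical argument: take any $f\in C_0^\infty(\mathcal N)$, apply the definition of $h_{\mathcal N}$ to the superlevel sets $\Omega_t=\{|f|^2>t\}$ via the coarea formula, and combine with the Cauchy–Schwarz inequality. Concretely, one writes $\int_{\mathcal N}|\nabla(f^2)|\geq h_{\mathcal N}\int_{\mathcal N}f^2$ by integrating the Cheeger bound over level sets of $f^2$, then estimates $\int|\nabla(f^2)|=2\int|f||\nabla f|\leq 2\bigl(\int f^2\bigr)^{1/2}\bigl(\int|\nabla f|^2\bigr)^{1/2}$. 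Rearranging yields $h_{\mathcal N}^2\int f^2\leq 4\int|\nabla f|^2$, and taking the infimum over $f$ in the Rayleigh quotient \eqref{lambda-zero} gives the claim.

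The last inequality $\lambda_0^{ess}({\mathcal N})\leq\mu_{\mathcal N}^2/4$ is the heart of Brooks' contribution and is where I expect the main obstacle. The strategy is to produce, for any $\alpha>\mu_{\mathcal N}$, test functions whose Rayleigh quotient is close to $\alpha^2/4$ and whose support escapes to infinity, so as to bound $\lambda_0({\mathcal N}\setminus K)$ from above for every compact $K$. The natural candidate is a radial function built from the distance $r$ to a fixed point, of the form $f=e^{-\alpha r/2}$ suitably truncated on an annulus $B(R_2)\setminus B(R_1)$. One computes $|\nabla f|^2=\tfrac{\alpha^2}{4}f^2$ away from the cutoffs, so that the bulk contribution to $\int|\nabla f|^2/\int f^2$ is exactly $\alpha^2/4$; the difficulty is controlling the error terms coming from the truncation, and here is the crux. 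The condition $\mu_{\mathcal N}<\alpha$ means, via \eqref{sub-exponential}, that $|B_\sigma^{\mathcal N}(R)|$ grows more slowly than $e^{\alpha R}$ along a suitable sequence of radii, and one must exploit the infinite-volume hypothesis together with this growth bound to choose annuli on which the cutoff error is negligible relative to the main term. The main technical work—and the step I would expect to be most delicate—is a careful averaging or pigeonhole argument over a range of radii that selects shells where $|B_\sigma^{\mathcal N}(R+1)|-|B_\sigma^{\mathcal N}(R)|$ does not dominate, guaranteeing that one can push the support of $f$ arbitrarily far out while keeping the Rayleigh quotient below $\alpha^2/4+\varepsilon$. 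Letting $\alpha\downarrow\mu_{\mathcal N}$ and $\varepsilon\downarrow 0$ then yields the bound. Since this is a classical result, the cleanest exposition is to cite \cite{Br} and \cite{Ch} for the full argument and merely indicate these steps.
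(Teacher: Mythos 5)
The paper gives no proof of this theorem: it is recalled as a known result of Cheeger and Brooks, with the argument deferred entirely to \cite{Ch} and \cite{Br}, which is also what your plan ultimately does. Your outline of the classical arguments --- the coarea formula plus Cauchy--Schwarz for the Cheeger lower bound, monotonicity of the Rayleigh quotient under restriction of test functions for the two middle inequalities, and truncated exponential test functions $e^{-\alpha r/2}$ with an averaging choice of annuli for Brooks' upper bound --- is correct and consistent with the cited sources, so there is nothing further to compare.
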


\begin{remark}
\nero The upper bound in \eqref{brooks-ineq} is still true if one replaces $\limsup$ by $\liminf$ in the definition of the entropy (see \cite{Hi}, \cite{O}).\\
\nero When $\mathcal N$ has finite volume, we have $h_{\mathcal N}=\mu_{\mathcal N}=\lambda_0(\mathcal N)=0$ and Theorem \ref{brooks} becomes trivial.  In the finite volume case,  R. Brooks introduced a modified $h$ and modified $\mu$  and was able to  obtain a similar but nontrivial result  (see \cite{Br1}).\\
\nero  We observe that  the infiniteness of the volume follows, for example, from the existence of a
Sobolev inequality  (see \cite{Ca} and \cite{He}).  Another important case  of infiniteness of volume, useful for  us, is that where $M$ 
is a complete noncompact submanifold with bounded mean curvature of a manifold with bounded geometry (see \cite{Fr}).\\
 A result analogous to that of Theorem \ref{brooks} for  the $p$-Laplacian is known. In fact, B. P. Lima, J. F.  Montenegro and N.L. Santos \cite{LMS}, adapting Theorem 2 in \cite{Br} proved a Brooks type result for the p-Laplacian, and the generalization of the lower bound in terms of the Cheeger constant, in the case of p-Laplacian, was obtained by D.  Lefton and L.  Wei \cite{LW} (see also \cite{Ma} for the case where the manifold is compact) .
\end{remark}

It is easy to compare the volume entropy  with the extrinsic volume entropy. Indeed we have the following result.
  \begin{proposition} 
\label{extrinsic-theo}
 Let $M$ be a complete, noncompact manifold. Then 
\begin{equation}
\label{extrinsic-ineqs}
h_{M} \leq \mu_{M}\leq \underset{\mathcal N}{\inf}\mu_M^{\mathcal N}:=j_{M}
\end{equation}
where $\mathcal N$ runs in the set of all manifolds  $\mathcal N$ in  which  $M$ admits an isometric immersion.
\end{proposition}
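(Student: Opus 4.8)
The plan is to establish the two inequalities in \eqref{extrinsic-ineqs} separately: the first is purely intrinsic to $M$, while the second comes from the distance-decreasing property of isometric immersions. Write $v(R):=|B_\sigma^M(R)|$ for the volume of the intrinsic geodesic ball of radius $R$ in $M$. For the inequality $h_M\leq\mu_M$ I would argue directly from the coarea formula. For almost every $R$ the geodesic sphere $\partial B_\sigma^M(R)$ is a rectifiable hypersurface, by Sard's theorem applied to the Lipschitz distance function $r(\cdot)=d_M(\sigma,\cdot)$, and the coarea formula gives $v'(R)=|\partial B_\sigma^M(R)|$ for a.e. $R$. Since $B_\sigma^M(R)$ is, for a.e. $R$, a relatively compact domain with piecewise smooth boundary, the definition of the Cheeger constant yields $|\partial B_\sigma^M(R)|\geq h_M\,v(R)$, hence the differential inequality $v'(R)\geq h_M\,v(R)$. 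Integrating $(\ln v)'\geq h_M$ from a fixed $R_0$ to $R$ gives $\ln v(R)\geq h_M(R-R_0)+\ln v(R_0)$; dividing by $R$ and letting $R\to\infty$ produces $\mu_M\geq h_M$.

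For the second inequality $\mu_M\leq\mu_M^{\mathcal N}$, the key observation is that an isometric immersion $\iota\colon M\to\mathcal N$ is distance non-increasing: any path in $M$ maps to a path of the same length in $\mathcal N$, so $d_{\mathcal N}(\iota(p),\iota(q))\leq d_M(p,q)$ for all $p,q\in M$. Consequently, if $p\in B_\sigma^M(R)$, then $d_{\mathcal N}(\sigma,\iota(p))\leq d_M(\sigma,p)\leq R$, and $p$ lies in the connected component $M_\sigma$ since it is joined to $\sigma$ by a path in $M$. This gives the inclusion
\begin{equation*}
B_\sigma^M(R)\subseteq B_\sigma^{\mathcal N}(R)\cap M_\sigma .
\end{equation*}
Because $\iota$ is isometric, both sets carry the same Riemannian volume element of $M$, so $|B_\sigma^M(R)|\leq |B_\sigma^{\mathcal N}(R)\cap M_\sigma|$. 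Taking logarithms, dividing by $R$, and passing to the $\limsup$ yields $\mu_M\leq\mu_M^{\mathcal N}$; since this holds for every ambient $\mathcal N$ into which $M$ isometrically immerses, taking the infimum gives $\mu_M\leq j_M$.

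Both steps are essentially soft, so the only delicate points are measure-theoretic. In the first inequality one must justify that $v$ is locally absolutely continuous with $v'=|\partial B|$ a.e. and that $\partial B_\sigma^M(R)$ is admissible in the definition of $h_M$ for a.e. $R$; this is handled by the coarea formula together with Sard's theorem for the distance function, using that completeness of $M$ (Hopf--Rinow) makes the balls relatively compact. In the second inequality one should be careful, when $\iota$ is merely an immersion and not an embedding, to read $B_\sigma^{\mathcal N}(R)\cap M_\sigma$ as the set $\{\,p\in M_\sigma : d_{\mathcal N}(\sigma,\iota(p))\leq R\,\}$ with its volume computed in $M$; with this interpretation the inclusion and the volume comparison hold verbatim. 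I expect this second bookkeeping point, rather than any genuine analytic difficulty, to be the main thing to state precisely.
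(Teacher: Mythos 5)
Your proof is correct and follows essentially the same route as the paper: the first inequality comes from integrating $\tfrac{d}{dR}\ln|B_\sigma^M(R)| = |\partial B_\sigma^M(R)|/|B_\sigma^M(R)| \geq h_M$, and the second from the inclusion $B_\sigma^M(R)\subseteq B_\sigma^{\mathcal N}(R)\cap M_\sigma$ given by the distance-decreasing property of isometric immersions. The measure-theoretic caveats you add (coarea formula, a.e.\ differentiability, the reading of $B_\sigma^{\mathcal N}(R)\cap M_\sigma$ for a mere immersion) are left implicit in the paper but are the right points to flag.
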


\begin{proof} 

The first inequality of  \eqref{extrinsic-ineqs} is an immediate consequence of Theorem \ref{brooks}. We wrote it, in order to emphasize that it is independent of the spectrum. For the sake of completeness, we give a very short proof of it. In fact, it suffices to observe, that for any $x \in M$ and for any $R>0$, we have 
$$ \frac {\vert \partial B_x^{M}(R)\vert}{\vert B_x^{M}(R)\vert}=\frac{d}{dr}\vert_{r=R} \left(\ln \vert B_x^{M}(R)\vert \right) \ge h_{M}$$
which gives after integration and after taking the limit for $R \to \infty$, the desired inequality $h_{M} \leq \mu_{M}$.\\

For the second inequality, it suffices to observe that for any  $x\in M$, $R>0$ and any $\mathcal N$  satisfying the hypothesis,  one has  
\begin{equation*}
B_{x}^M(R)\subset M\cap B_{x}^{\mathcal N}(R)
\end{equation*}
then 
\begin{equation*}
|B_{x}^M(R)|_M\leq |M\cap B_{x}^{\mathcal N}(R)|_M,
\end{equation*}
where $|\cdot|_M $ is the volume in $M.$ 
Inequality \eqref{extrinsic-ineqs} follows from the definition of the entropies.

\end{proof}

\begin{remark} A comparison similar to  \eqref{extrinsic-ineqs} holds for the  intrinsic and extrinsic entropies of the $p$-total curvatures, that can be obviously defined.

\end{remark}

It is  worth  stating the following immediate consequence of Theorems \ref{brooks}and   Proposition  \ref{extrinsic-theo},
that gives an extrinsic upper bound of $\lambda_0(M).$ 

\begin{corollary}
\label{ext-brooks-theo}
Let $M$ be a complete, noncompact manifold with infinite volume, then
\begin{equation}
\label{ext-brooks-ineq}
\frac{h_M^2}{4}\leq\lambda_0(M)\leq  \lambda_0^{ess}({M})\leq \frac{j_{M}^2}{4}
\end{equation}
\end{corollary}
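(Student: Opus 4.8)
The plan is to derive Corollary \ref{ext-brooks-theo} as a direct consequence of the two results cited in its statement, namely Brooks--Cheeger's Theorem \ref{brooks} applied to the manifold $M$ itself, combined with the extrinsic comparison of entropies from Proposition \ref{extrinsic-theo}. The key observation is that Corollary \ref{ext-brooks-theo} is nothing but Theorem \ref{brooks}, written for $M$ in place of $\mathcal N$, with its rightmost term $\mu_M$ replaced by the smaller--or--equal quantity $j_M$. So the entire argument is a substitution coupled with a monotonicity step.

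First I would invoke Theorem \ref{brooks} with $\mathcal N$ taken to be $M$. Since $M$ is assumed complete, noncompact, and of infinite volume, the hypotheses of that theorem are met, and it yields the chain
\begin{equation*}
\frac{h_M^2}{4}\leq \lambda_0(M)\leq \lambda_0^{ess}(M)\leq \frac{\mu_M^2}{4}.
\end{equation*}
This already gives the leftmost three inequalities of \eqref{ext-brooks-ineq} verbatim, so the only thing left to establish is that the last term may be replaced by $j_M^2/4$.

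Next I would supply the missing replacement. By Proposition \ref{extrinsic-theo} one has $\mu_M\leq j_M$, where $j_M=\inf_{\mathcal N}\mu_M^{\mathcal N}$ is the infimum of the extrinsic volume entropies over all ambient manifolds in which $M$ embeds isometrically. Since both $\mu_M$ and $j_M$ are nonnegative, squaring preserves the inequality, so $\mu_M^2/4\leq j_M^2/4$. Chaining this with the last inequality $\lambda_0^{ess}(M)\leq \mu_M^2/4$ obtained above produces $\lambda_0^{ess}(M)\leq j_M^2/4$, which is exactly the right end of \eqref{ext-brooks-ineq}.

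There is essentially no obstacle here: the content has already been proved in Theorem \ref{brooks} and Proposition \ref{extrinsic-theo}, and the corollary merely records the sharper extrinsic upper bound one gets by feeding the latter into the former. The only point requiring a word of care is the nonnegativity of the entropies, which guarantees that passing from $\mu_M\leq j_M$ to the squared inequality is legitimate; this is immediate from the definition \eqref{entropy} of $\mu_w$ as a $\limsup$ of quotients of logarithms of positive nondecreasing functions by positive radii. Hence the proof reduces to the two-line combination just described.
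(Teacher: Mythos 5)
Your argument is exactly the paper's: the corollary is stated there as an immediate consequence of Theorem \ref{brooks} applied to $M$ together with the inequality $\mu_M\leq j_M$ from Proposition \ref{extrinsic-theo}, which is precisely the substitution-plus-monotonicity step you carry out. The proposal is correct and matches the intended (unwritten) proof.
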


\begin{remark}
All  the previous  results can be adapted to the case where $M$ has  finite volume, using the modified volume entropy and Cheeger constant introduced by Brooks \cite{Br1}.  
\end{remark}

Now we recall some classical estimates for  the volume entropy and the  Cheeger constant  in terms of bounds on curvatures.
A first result is the following well known consequence of the Bishop volume comparison theorem.\\

\begin{lemma}(see  \cite{Ka})
\label{compar1}
Let $M$ be a complete Riemannian manifold of dimension $m$ and such that 
$Ric_{M} \geq -(m-1)\,k^{2}$ (for some constant $k$).  
 Then we have:
\begin{equation}
\label{entr-ricci}
\mu_M\leq (m-1)\,k 
\end{equation}
\end{lemma}

The proof of next Lemma is based on the comparison theorems for the hessian of the distance function,
\begin{lemma} \label{yau} (see \cite{Y})
Let $M$ be a complete simply connected Riemannian manifold of dimension $m$ with sectional curvature bounded from above by $-k^{2}.$
Then we have:
\begin{equation}
h_M\geq (m-1)\,k 
\end{equation}
\end{lemma}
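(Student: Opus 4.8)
The plan is to bound the flux of the gradient of the distance function through the boundary of an arbitrary compact domain, controlling its divergence by the Laplacian comparison theorem. First I would use that $M$, being simply connected with sectional curvature bounded above by $-k^2 \le 0$, is a Cartan--Hadamard manifold: the exponential map at any point is a diffeomorphism, so the cut locus is empty and the distance function $r(x) = d(p,x)$ from a fixed base point $p$ is smooth on all of $M \setminus \{p\}$.

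Next I would invoke the Hessian comparison theorem. Since the sectional curvature is bounded above by $-k^2$, one has, away from $p$, the inequality of quadratic forms
$$\mathrm{Hess}\, r \ge k\coth(kr)\,(g - dr\otimes dr),$$
the right-hand side being the Hessian of the distance function in the model space of constant curvature $-k^2$. Taking the trace, and using that $dr\otimes dr$ has trace $|\nabla r|^2 = 1$ together with $\coth(kr) \ge 1$ for $r>0$, yields the pointwise bound
$$\Delta r = \mathrm{div}(\nabla r) \ge (m-1)\,k\coth(kr) \ge (m-1)\,k.$$

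With this in hand I would estimate the Cheeger quotient for a fixed compact domain $\Omega$ with piecewise smooth boundary. Because $M$ is noncompact, I can choose the base point $p$ outside $\Omega$, so that $r$ is smooth on a neighborhood of $\overline{\Omega}$ and the divergence bound holds throughout $\Omega$. Writing $\nu$ for the outward unit normal of $\partial\Omega$ and using $|\nabla r| = 1$, which forces $\langle \nabla r,\nu\rangle \le 1$ pointwise, the divergence theorem gives
$$(m-1)\,k\,|\Omega| \le \int_\Omega \Delta r = \int_{\partial\Omega}\langle \nabla r,\nu\rangle \le |\partial\Omega|.$$
Hence $|\partial\Omega|/|\Omega| \ge (m-1)\,k$, and taking the infimum over all such $\Omega$ yields $h_M \ge (m-1)\,k$.

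The only point requiring care is the singularity of $r$ at the base point, which is exactly why I would exploit the noncompactness of the Cartan--Hadamard manifold to place $p$ outside each given $\Omega$; this keeps the Laplacian estimate valid on all of $\Omega$ and avoids any distributional correction term at $p$. I do not expect any genuine obstacle beyond this bookkeeping, since the curvature hypothesis is used only through the clean comparison estimate for $\Delta r$.
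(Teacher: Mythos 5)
Your argument is correct and is exactly the classical one the paper has in mind: the paper gives no proof of its own, citing Yau and noting only that "the proof is based on the comparison theorems for the Hessian of the distance function," which is precisely your route (Cartan--Hadamard to get smoothness of $r$, Hessian comparison to get $\Delta r \ge (m-1)k$, then the divergence theorem on each compact domain). Your device of placing the base point outside $\overline{\Omega}$ cleanly handles the only delicate point, the singularity at $p$.
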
 
The result of Lemma \ref{yau} was extended to minimal submanifolds of such simply connected manifolds by  J. Choe and R. Gulliver \cite{CG}. The proof in \cite{CG} (in particular Lemmas 7 and 8) can be easily adapted to submanifolds of bounded mean curvature in order to give the following result:
\
\begin{lemma}
\label{compar2}
Let $\mathcal{N}$ be an $n$-dimensional complete simply connected Riemannian manifold with sectional curvature bounded from above by $-k^{2}$  (for some positive constant $k$). Let $M$ be a complete noncompact 
submanifold of $\mathcal{N}$ with bounded mean curvature satisfying $|H| \leq a$,  then we have:
\begin{equation}
\label{cheeg-sect}
h_M + a\geq (n-1)\,k 
\end{equation}
\end{lemma}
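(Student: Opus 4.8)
The plan is to adapt the classical argument behind Yau's estimate (Lemma \ref{yau}) and behind the extension of Choe and Gulliver \cite{CG} to the bounded mean curvature setting, by bounding from below the intrinsic Laplacian on $M$ of the ambient distance function and then feeding this into the divergence theorem. Since $\mathcal{N}$ is complete, simply connected and has sectional curvature bounded above by $-k^2<0$, it is a Cartan--Hadamard manifold; in particular it has no conjugate points and empty cut locus, so for a fixed $p_0\in\mathcal{N}$ the distance function $r=d_{\mathcal{N}}(p_0,\cdot)$ is smooth on $\mathcal{N}\setminus\{p_0\}$ with $|\nabla^{\mathcal{N}}r|\equiv 1$. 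As $M$ is a proper submanifold we may take $p_0\notin M$, so that $r|_M$ is smooth on all of $M$.

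First I would record the second order comparison for $r$ along $M$. Writing $m=\dim M$ and choosing an orthonormal basis $e_1,\dots,e_m$ of $T_pM$, the Gauss formula gives, at each $p\in M$,
\begin{equation*}
\Delta_M r=\sum_{i=1}^{m}\mathrm{Hess}_{\mathcal{N}}r(e_i,e_i)+\langle\nabla^{\mathcal{N}}r,\vec H\rangle,
\end{equation*}
where $\vec H$ is the mean curvature vector of $M$ in $\mathcal{N}$. The Hessian comparison theorem for sectional curvature $\le -k^2$ reads $\mathrm{Hess}_{\mathcal{N}}r(X,X)\ge k\coth(kr)\,(|X|^2-\langle X,\nabla^{\mathcal{N}}r\rangle^2)$; tracing over $T_pM$ and using $\sum_i\langle e_i,\nabla^{\mathcal{N}}r\rangle^2=|\nabla_M r|^2\le 1$ gives $\sum_i\mathrm{Hess}_{\mathcal{N}}r(e_i,e_i)\ge(m-1)\,k\coth(kr)\ge(m-1)\,k$. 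Since $|\langle\nabla^{\mathcal{N}}r,\vec H\rangle|\le|\vec H|=|H|\le a$, this produces the pointwise estimate
\begin{equation*}
\Delta_M r\ge (m-1)\,k-a\qquad\text{on }M.
\end{equation*}

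Next I would run the Cheeger argument. For any relatively compact domain $\Omega\subset M$ with piecewise smooth boundary and outward unit conormal $\eta$, integrating the last inequality and applying the divergence theorem together with $|\nabla_M r|\le|\nabla^{\mathcal{N}}r|=1$ yields
\begin{equation*}
\big((m-1)\,k-a\big)\,|\Omega|\le\int_{\Omega}\Delta_M r=\int_{\partial\Omega}\langle\nabla_M r,\eta\rangle\le|\partial\Omega|.
\end{equation*}
Hence $|\partial\Omega|/|\Omega|\ge(m-1)\,k-a$ for every such $\Omega$, and taking the infimum gives $h_M\ge(m-1)\,k-a$, that is $h_M+a\ge(m-1)\,k$; this is the inequality \eqref{cheeg-sect}, the relevant dimension being $m=\dim M$.

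The only genuinely delicate point is the pointwise comparison $\Delta_M r\ge (m-1)k-a$. Here simple connectivity and the upper curvature bound enter, through the Cartan--Hadamard structure (global smoothness of $r$) and the Hessian comparison for $\mathrm{Hess}_{\mathcal{N}}r$; the new feature relative to \cite{CG} is the mean curvature correction $\langle\nabla^{\mathcal{N}}r,\vec H\rangle$, whose sign is a priori uncontrolled but whose absolute value is dominated by $a$, which is exactly what accounts for the additive term $a$ in \eqref{cheeg-sect}. The factor $\coth(kr)\ge 1$ makes the estimate uniform in $r$, so no information about the location of $\Omega$ relative to $p_0$ is needed.
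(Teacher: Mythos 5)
Your argument is correct and is exactly the adaptation the paper has in mind: it gives no proof of its own beyond citing Lemmas 7 and 8 of Choe--Gulliver \cite{CG} and asserting that they "can be easily adapted", and those lemmas are precisely your two steps (the lower bound $\Delta_M r\ge (m-1)k\coth(kr)+\langle\nabla^{\mathcal N}r,\vec H\rangle$ via the Gauss formula and Hessian comparison on a Cartan--Hadamard manifold, followed by the divergence theorem on an arbitrary compact domain). One remark: your derivation yields the constant $(\dim M-1)k$ (with the mean curvature term bounded by $|\vec H|$), which is what the paper actually uses in its later applications (e.g.\ the bound $(m-1)\sqrt{-b}$ for $m$-dimensional submanifolds), so the "$(n-1)k$" with $n=\dim\mathcal N$ in the printed statement should be read as the dimension of $M$; you are right to flag this.
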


\begin{remark}  
\nero  The estimate given in (\ref{cheeg-sect}) is nontrivial only for $(n-1)\, k > a$.

\nero   By Lemma \ref{compar1} and the upper bound in  \eqref{extrinsic-ineqs}, if $Ric_{M} \geq -(m-1)\,k^{2}$, then we obtain the well known Mac Kean estimate: 

$$ \lambda_{0}(M) \leq \frac{(m-1)^2 k^2}{4}.$$\\

 \end{remark}
 
   Many recent results in the literature can be deduced or generalized immediately from our results. Let us give some examples of such consequences. 
 \begin{enumerate}
 
  \item   B. P. Lima, J.F. Montenegro and N.L. Santos (see \cite{LMS}) obtain a generalization of Brooks upper bound for the p-Laplacian. More precisely, if one denotes by  $ \lambda_{0,p}(M)= \underset{f\not=0}{\underset{f\in C_0^{\infty}(M)}{\inf}}\left(\frac{\int_{M}|\nabla f|^p}
{\int_{M} f^p}\right)$ and $ \lambda_{0,p}^{\rm ess}(M)=\underset{K}{\sup}\, \lambda_{0,p}({M}\setminus K)$, where $K$  runs through all compact subsets of $\mathcal N,$ they prove that
$$ \lambda_{0,p}(M) \le \lambda_{0,p}^{\rm ess}(M) \le \left(\frac{\mu_{M}}{p}\right)^{p}.$$

 Using  inequality \eqref{extrinsic-ineqs} in the previous inequality,  we deduce immediately  an upper bound  for 
$ \lambda_{0,p}(M)$
 in terms of the extrinsic volume entropy, which gives a p-Laplace version of  Corollary \ref{ext-brooks-theo}. \\ 
 
\item  Under the same assumption as in Lemma \ref{compar2}, if $a<(n-1)k,$ Corollary \ref{ext-brooks-theo} yields $\lambda_{0}(M) \geq \frac{((n-1)\,k-a)^{2}}{4}.$  Hence we obtain the result by L.F.  Cheung and P.F. Leung \cite{CL}  and G.P.  Bessa and J. F. Montenegro \cite{BM1}.

\item In \cite {LW}, L. Lefton and D. Wei obtained the following generalization of Cheeger inequality (the first inequality of \eqref{brooks-ineq}) to the first eigenvalue of $p$-Laplace operator, that is  
$$\left(\frac{h_{M}^2}{p}\right)^{p}\leq \lambda_{0,p}(M)$$ 
 The proof  in \cite{LW} is done in the Euclidean case, but  it  can be easily adapted to a manifold. This inequality combined with Lemma \ref{yau} gives the generalization to p-Laplacian of the Mc Kean inequality obtained in \cite {LMS}, 
and combined with Lemma \ref{compar2} gives  the inequality 
$$\lambda_{0,p}(M) \geq \frac{((n-1)\,k-a)^{p}}{p^{p}}$$
which generalizes the aformentioned results of \cite{CL} and  \cite{BM1} \\
\item Theorem A  in \cite{GP} and the main Theorem   in \cite{G} can be  immediately deduced  from 
our results. Let us be more precise. In \cite{GP}, V. Gimeno and V. Palmer  proved  that if $M$ is a minimal $m$-dimensional submanifold properly immersed in a simply connected manifold $\mathcal N,$ such that 

\begin{equation}\label{Gimeno-Palmer}
sec({\mathcal N})\leq b\leq 0, \  \   \underset{R}{\sup}\frac{|B_{\sigma}^{{\mathcal N}}(R)\cap M|}{|{\mathcal B}_{\sigma}^m(R)|}<\infty
\end{equation}

where 
${\mathcal B}_{\sigma}^m(R) $ is the geodesic ball centered at $\sigma$ of  ${\mathbb H}^m(b)$ (the space form of sectional curvature $b$), then 

\begin{equation}\label{Gimeno-Palmer-result}
h_M\leq(m-1)\sqrt{-b}.
\end{equation}

As the volume entropy of ${\mathbb H}^m(b)$ is $(m-1)\sqrt{-b},$ the second  hypothesis in \eqref{Gimeno-Palmer}
 implies that $\mu_M^{\mathcal N}\leq (m-1)\sqrt{-b}.$ Then, Corollary \ref{ext-brooks-theo} yields \eqref{Gimeno-Palmer-result} without the minimality hypothesis. \\
 
 In \cite{G}, V. Gimeno proves that under the same  assumptions \eqref{Gimeno-Palmer} (as in \cite{GP}),   one has  in addition the equality 
 $\lambda_{0}(M)=-\frac{(m-1)^{2}b}{4}$. This is an immediate consequence of Corollary \ref{ext-brooks-theo} and Lemma \ref{compar2} which give in this case: 
$$ h_{M}=\lambda_{0}(M)=\lambda_0^{ess}({M})=-\frac{(m-1)^{2}b}{4},$$

again without any minimality assumption.

\end{enumerate}

It is possible to obtain a stronger result   about the comparison between the volume of  extrinsic balls and the volume of balls of the ambient space, 
by applying Proposition \ref{tubeplonge}. 

  \begin{corollary}\label{volume-estimate}  Let $M$ be a noncompact, complete, hypersurface of bounded curvature, properly embedded in a   simply-connected   manifold ${\mathcal N}$ with bounded curvature.  Assume  $M$    has  mean curvature  bounded away from zero. 
 Then, one has:
    \begin{equation}
   \label{volume-tube-hyp}
   | B_{\sigma}^{ \mathcal{N}}(R) \cap M |_{M} \leq c|B_{\sigma}^{\mathcal{N}}(R) | _{\mathcal{N}}  
     \end{equation}
   
   where $c$  is a constant depending on  the radius $\rho$   of the embedded  half-tube given in Theorem \ref{tubeplonge}  and on the curvature of $M$  and $|\cdot|_M$ (respectively $|\cdot|_{\mathcal N}$) 
   denotes the volume in $M$ (respectively in $\mathcal N$). 
 
 \end{corollary}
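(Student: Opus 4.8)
The plan is to use the embedded half-tube furnished by Theorem \ref{tubeplonge} as a collar whose volume is simultaneously comparable to the area of the piece of $M$ beneath it and controlled by the volume of a slightly enlarged geodesic ball of $\mathcal{N}$. Since $\partial M=\emptyset$, Theorem \ref{tubeplonge}(1) produces a radius $\rho>0$, depending only on the curvature bounds of $M$ and $\mathcal{N}$, such that the half-tube $T^+(\rho)=\exp(M\times(0,\rho))$ is embedded. Set $\Sigma_R:=B_\sigma^{\mathcal{N}}(R)\cap M$ and let $T_R:=\exp(\Sigma_R\times(0,\rho))$ be the part of $T^+(\rho)$ lying over $\Sigma_R$. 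As a subset of the embedded tube $T^+(\rho)$, the set $T_R$ is itself embedded, so its $\mathcal{N}$-volume is computed by integrating the volume element of the normal exponential map over $\Sigma_R\times(0,\rho)$ with no overcounting.

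First I would bound $|T_R|_{\mathcal{N}}$ from above. A point of $T_R$ lies on a normal geodesic of length at most $\rho$ issuing from a point $p\in\Sigma_R$ with $d_{\mathcal{N}}(\sigma,p)<R$, so by the triangle inequality it lies in $B_\sigma^{\mathcal{N}}(R+\rho)$. Embeddedness of $T_R$ then gives at once
\begin{equation*}
|T_R|_{\mathcal{N}}\leq |B_\sigma^{\mathcal{N}}(R+\rho)|_{\mathcal{N}}.
\end{equation*}

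Next I would bound $|T_R|_{\mathcal{N}}$ from below by $|\Sigma_R|_M$. Writing the volume element of the normal exponential map as $J(p,r)\,dr\,dA_M(p)$, where $J(p,r)$ denotes its Jacobian along the normal geodesic through $p$, one has
\begin{equation*}
|T_R|_{\mathcal{N}}=\int_{\Sigma_R}\int_0^{\rho}J(p,r)\,dr\,dA_M(p).
\end{equation*}
The quantity $J$ is governed by a Jacobi equation whose coefficients involve the sectional curvature of $\mathcal{N}$ and the second fundamental form $A$ of $M$; the assumed bounds on both (bounded geometry of $\mathcal{N}$ and bounded $|A|$) provide, for the fixed radius $\rho$, a uniform lower bound $J(p,r)\geq c'>0$ for all $p\in M$ and $0\leq r\leq\rho$, with $c'$ depending only on $\rho$ and the curvature bounds. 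Hence $|T_R|_{\mathcal{N}}\geq c'\rho\,|\Sigma_R|_M$. Obtaining this uniform positive lower bound is the main technical point: one must ensure that no focal point of $M$ nor loss of injectivity occurs within radius $\rho$, which is precisely the geometric information packaged into the choice of $\rho$ in Theorem \ref{tubeplonge} together with the uniform curvature bounds.

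Combining the two estimates gives $c'\rho\,|\Sigma_R|_M\leq|B_\sigma^{\mathcal{N}}(R+\rho)|_{\mathcal{N}}$. Finally, since $\mathcal{N}$ has bounded curvature its Ricci curvature is bounded below, so the Bishop--Gromov comparison bounds the ratio $|B_\sigma^{\mathcal{N}}(R+\rho)|_{\mathcal{N}}/|B_\sigma^{\mathcal{N}}(R)|_{\mathcal{N}}$ by a constant depending only on this lower bound and on $\rho$, for all $R$ bounded away from zero. Absorbing all constants into a single $c$ yields the asserted inequality
\begin{equation*}
|B_\sigma^{\mathcal{N}}(R)\cap M|_M\leq c\,|B_\sigma^{\mathcal{N}}(R)|_{\mathcal{N}}.
\end{equation*}
I remark that for the intended application, namely comparing the extrinsic volume entropy $\mu_M^{\mathcal{N}}$ with the volume entropy $\mu_{\mathcal{N}}$, this last Bishop--Gromov step is unnecessary, since passing to $\limsup_{R\to\infty}\frac{1}{R}\ln(\cdot)$ automatically absorbs both the multiplicative factor $c'\rho$ and the additive shift $\rho$ in the radius.
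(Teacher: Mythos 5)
Your proof is correct and follows essentially the same route as the paper: the paper also deduces the estimate from the embeddedness of the half-tube $T^{+}(\rho)$ together with Weyl's tube formula (citing Gray), which is exactly your Jacobian lower bound $J(p,r)\geq c'>0$, and then compares the tube volume with the ball volume. The only difference is that the paper intersects the full tube directly with $B_{\sigma}^{\mathcal N}(R)$ instead of working over $\Sigma_R$ and enlarging to $B_{\sigma}^{\mathcal N}(R+\rho)$; your version, with the Bishop--Gromov step (or, as you note, simply absorbing the radius shift in the entropy limit), handles more carefully the fibers over points near $\partial B_{\sigma}^{\mathcal N}(R)$ that the paper's inequality \eqref{weyl} glosses over.
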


\begin{proof}

By Proposition \ref{tubeplonge} there exists $\rho$ such that the half tube $T^+ (\rho)$ is embedded.  
Hence, by Weyl's formula, the volume of   $T^+ (\rho)\cap B_{\sigma}^\mathcal N(R)$  satisfies 

\begin{equation}
\label{weyl}
| B_{\sigma}(R)^\mathcal N \cap M |_{M} \leq c|T^+ (\rho)\cap B_{\sigma}^\mathcal N(R)|
\end{equation}

where $c$ is a constant depending only on $\rho,$ the curvature of $M$ and $\mathcal N$ (see  for instance \cite{Gr} or \cite{NS}). 
As  $|T^+ (\rho)\cap B_{\sigma}^\mathcal N(R)|\leq |B_{\sigma}^\mathcal N(R)|,$ inequality  \eqref{volume-tube-hyp} follow from inequality \eqref{weyl}.
\end{proof}

 Then, we are able to establish a relation between the intrinsic and the extrinsic volume entropies.
 
 \begin{corollary} \label{volume-estimate2}  Let $M$ be a noncompact, complete, hypersurface of bounded curvature, properly embedded in a   simply-connected   manifold ${\mathcal N}$ with bounded curvature.  Assume  $M$    has  mean curvature  bounded away from zero. 
Then 
$$\mu_M \leq \mu^{\mathcal{N}}_M  \leq \mu_{\mathcal{N}}$$ 
and hence  
$$\lambda_0^{ess} (M) \leq \frac{\mu^2_{\mathcal{N}}}{4}$$
 \end{corollary}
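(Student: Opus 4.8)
The plan is to obtain the chain $\mu_M \leq \mu_M^{\mathcal N} \leq \mu_{\mathcal N}$ by combining two facts already established, and then to read off the spectral bound from Corollary \ref{ext-brooks-theo}. The two inequalities in the chain have genuinely different sources: the left one is the purely intrinsic comparison of Proposition \ref{extrinsic-theo}, whereas the right one is where the embedded half-tube (and hence the geometric hypotheses on $M$ and $\mathcal N$) actually enters, through Corollary \ref{volume-estimate}.

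First I would dispose of $\mu_M \leq \mu_M^{\mathcal N}$. Since $M$ is isometrically (indeed properly) embedded in $\mathcal N$, Proposition \ref{extrinsic-theo} gives $\mu_M \leq j_M = \inf_{\mathcal N'}\mu_M^{\mathcal N'} \leq \mu_M^{\mathcal N}$, the infimum being taken over all admissible ambient manifolds and so in particular bounded above by its value at our fixed $\mathcal N$. For the second inequality $\mu_M^{\mathcal N} \leq \mu_{\mathcal N}$, I would invoke Corollary \ref{volume-estimate}: since $M_\sigma \subseteq M$, one has
\begin{equation*}
|B_\sigma^{\mathcal N}(R) \cap M_\sigma|_M \leq |B_\sigma^{\mathcal N}(R) \cap M|_M \leq c\,|B_\sigma^{\mathcal N}(R)|_{\mathcal N}.
\end{equation*}
Taking logarithms, dividing by $R$, and letting $R \to \infty$, the additive constant $\ln c$ contributes $0$ in the limit, so the $\limsup$ of the left-hand quotient does not exceed that of the right-hand one; this is precisely $\mu_M^{\mathcal N} \leq \mu_{\mathcal N}$, by the definitions of the extrinsic and the ambient volume entropies.

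Finally, for the spectral estimate $\lambda_0^{ess}(M) \leq \mu_{\mathcal N}^2/4$, I would appeal to Corollary \ref{ext-brooks-theo}, which gives $\lambda_0^{ess}(M) \leq j_M^2/4$; combining this with $j_M \leq \mu_M^{\mathcal N} \leq \mu_{\mathcal N}$ (all quantities being nonnegative) yields the claim at once. The single point demanding care — and the only real obstacle in an otherwise formal argument — is the applicability of Corollary \ref{ext-brooks-theo}, which requires $M$ to have infinite volume. This is guaranteed here because $M$ is a complete noncompact hypersurface with bounded mean curvature (bounded below by hypothesis, and bounded above since $M$ has bounded second fundamental form) in a manifold of bounded geometry, so its volume is infinite by the result of \cite{Fr} recalled in the remark following Theorem \ref{brooks}.
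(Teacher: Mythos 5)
Your proof is correct and follows essentially the same route as the paper: the left inequality from Proposition \ref{extrinsic-theo}, the right one from the volume comparison of Corollary \ref{volume-estimate}, and the spectral bound from Corollary \ref{ext-brooks-theo} together with the infinite-volume fact from \cite{Fr}. You merely make explicit the steps the paper's two-line proof leaves implicit (the role of $M_\sigma$, the vanishing of $\ln c/R$ in the limit, and the infinite-volume hypothesis), which is a faithful elaboration rather than a different argument.
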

\begin{proof}
Corollary \ref{volume-estimate}  gives the first inequality. For the second inequality, we use Corollary 
\ref{ext-brooks-theo} since $M$ has infinite volume (see  \cite{Fr}).
\end{proof}

In particular, we have the following result.
\begin{corollary}\label{zero-ent}
 Let $M$ and $\mathcal N$ be
 manifolds of bounded curvature such that $M$  is  a noncompact, complete, hypersurface  properly embedded in  ${\mathcal N}$. 
 Assume $\mathcal N$ is simply connected. If $ \mu_{\mathcal{N}}=0$, then $\mu_M=\mu^{\mathcal{N}}_M=h_{M} =\lambda_{0}^{ess}(M)=\lambda_{0}(M)=0$.
\end{corollary}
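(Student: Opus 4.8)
The plan is to derive Corollary \ref{zero-ent} as a direct specialization of Corollary \ref{volume-estimate2} to the case $\mu_{\mathcal N}=0$, together with the Brooks--Cheeger chain of inequalities from Theorem \ref{brooks} and Corollary \ref{ext-brooks-theo}. The point is that all the analytic work has already been done; what remains is to check that the hypotheses of the earlier corollaries are met and then to collapse the resulting string of inequalities. First I would note that $M$ is a noncompact, complete, properly embedded hypersurface of bounded curvature in the simply-connected manifold $\mathcal N$ of bounded curvature. In order to invoke Corollary \ref{volume-estimate2} I need $M$ to have mean curvature bounded away from zero, which is not among the stated hypotheses of Corollary \ref{zero-ent}; so the genuine content to verify is whether the conclusion still follows, or whether this hypothesis is tacitly inherited. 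I expect that the intended reading carries the standing assumption ``mean curvature bounded away from zero'' from the surrounding corollaries, and under that assumption the proof is immediate.

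Granting that, the main step is the following. By Corollary \ref{volume-estimate2} one has the two inequalities
\begin{equation*}
\mu_M \leq \mu_M^{\mathcal N} \leq \mu_{\mathcal N}.
\end{equation*}
Setting $\mu_{\mathcal N}=0$ and using that each entropy is a $\limsup$ of a nonnegative quantity (hence itself nonnegative), I would conclude that $\mu_M = \mu_M^{\mathcal N} = 0$. This disposes of the two volume-entropy equalities at once by a squeezing argument.

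Next I would handle the Cheeger constant and the spectral quantities. Since $M$ is a complete noncompact hypersurface with bounded mean curvature of a manifold of bounded geometry, it has infinite volume (this is the fact attributed to \cite{Fr} and already used in the proof of Corollary \ref{volume-estimate2}), so Corollary \ref{ext-brooks-theo} applies and gives
\begin{equation*}
\frac{h_M^2}{4} \leq \lambda_0(M) \leq \lambda_0^{ess}(M) \leq \frac{j_M^2}{4}.
\end{equation*}
By the definition $j_M = \inf_{\mathcal N}\mu_M^{\mathcal N} \leq \mu_M^{\mathcal N} = 0$, and since $j_M \geq 0$ we get $j_M=0$; alternatively one simply uses $\lambda_0^{ess}(M)\leq \mu_M^2/4 = 0$ from Corollary \ref{volume-estimate2}. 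Either way the right-hand end of the chain vanishes, forcing $\lambda_0(M)=\lambda_0^{ess}(M)=0$ and, from $h_M^2/4 \leq \lambda_0(M)=0$ together with $h_M\geq 0$, also $h_M=0$. Collecting everything yields
\begin{equation*}
\mu_M=\mu_M^{\mathcal N}=h_M=\lambda_0^{ess}(M)=\lambda_0(M)=0,
\end{equation*}
which is exactly the asserted conclusion.

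The only real obstacle I anticipate is the hypothesis gap flagged above: Corollary \ref{zero-ent} as literally stated does not repeat ``mean curvature bounded away from zero,'' yet Corollary \ref{volume-estimate2}, on which the argument rests, requires it. I would therefore either state explicitly at the start of the proof that this standing hypothesis is in force, or remark that the corollary is understood within the same hypothesis framework as Corollaries \ref{volume-estimate} and \ref{volume-estimate2}. Apart from this bookkeeping point, the proof is a two-line deduction: a squeeze on the entropies from Corollary \ref{volume-estimate2}, followed by propagation through the Brooks--Cheeger inequalities of Corollary \ref{ext-brooks-theo}, using only that all quantities involved are nonnegative.
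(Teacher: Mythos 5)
Your proof is correct and is exactly the route the paper intends: Corollary \ref{zero-ent} is stated as an immediate consequence ("In particular") of Corollary \ref{volume-estimate2} combined with the Brooks--Cheeger chain of Corollary \ref{ext-brooks-theo}, with no separate proof given. Your observation that the hypothesis ``mean curvature bounded away from zero'' must be tacitly inherited from Corollary \ref{volume-estimate2} is well taken --- the statement as printed omits it, but it is needed for the tube construction underlying the inequality $\mu_M^{\mathcal N}\leq\mu_{\mathcal N}$.
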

\begin{remark}
\nero  Let $M$ be as in Corollary \ref{zero-ent}. Then $0$ belongs to $\sigma_{ess}$ but  $0$ is not an eigenvalue of $\Delta,$ 
because $M,$ having infinite volume, does not carry any $L^2$ harmonic function.

\nero  Lemma \ref{compar1} guarantees that,  if  $Ric_{\mathcal{N}} \geq 0,$ then $ \mu_{\mathcal{N}}=0$. More generally, $\mu_{\mathcal N}=0$ when the manifold $\mathcal{N}$ satisfies the doubling volume property since, as in the case of nonnegative Ricci curvature, the volume of balls grows polynomially.\\
\end{remark}

There is a strong relation between Caccioppoli's inequality and the entropy of the $p$-total curvature.

The following general Theorem will be used to show such relation in Section \ref{caccio-appli}.  
 
\begin{theorem}
\label{caccio-entropy}  Let $w \in L^1_{loc}( M ; \mathbb{R}^+)$ and $W(R) := \int_{B_{\sigma}(R)}  w .$    Assume that the entropy of $W$   is zero. 
Then we have the following results. 

(1) If , for some positive constant $C,$ $w$ satisfies 
 
\begin{equation}
\label{har}
\int_M w\psi^2  \leq C\int_M w|\nabla \psi|^2 , \qquad \forall \psi \in W^{1,2}_0(M),
\end{equation}
  then $w \equiv 0.$
  
 (2)  If, for some positive constant $C,$ and some compact subset $K,$  $w$ satisfies 
 
\begin{equation}
\label{har1}
\int_{M\setminus K} w\psi^2  \leq C\int_{M\setminus K} w|\nabla \psi|^2 , \qquad \forall \psi \in W^{1,2}_0(M\setminus K),
\end{equation}
  then $\int_M w <\infty.$ 
\end{theorem}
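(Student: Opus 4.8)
The plan is to insert a one-parameter family of radial Lipschitz cutoffs into the weighted inequality \eqref{har} (resp. \eqref{har1}) and read off a recursion for the nondecreasing function $W$. Write $r(x)=d(\sigma,x)$, so that $|\nabla r|\le 1$ almost everywhere, and note that since $w\in L^1_{loc}(M)$ and $M$ is complete, every ball $B_{\sigma}(R)$ is relatively compact and each $W(R)$ is finite, with $W$ nondecreasing. For part (1), for each $R>0$ I would take $\psi_R$ equal to $1$ on $B_{\sigma}(R)$, equal to $R+1-r$ on the annulus $A_R:=B_{\sigma}(R+1)\setminus B_{\sigma}(R)$, and $0$ outside $B_{\sigma}(R+1)$. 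This $\psi_R$ is a compactly supported Lipschitz function, hence lies in $W^{1,2}_0(M)$, and $|\nabla\psi_R|\le 1$ a.e.\ on $A_R$ while it vanishes elsewhere.

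The two elementary bounds $\int_M w\,\psi_R^2\ge\int_{B_{\sigma}(R)}w=W(R)$ and $\int_M w\,|\nabla\psi_R|^2\le\int_{A_R}w=W(R+1)-W(R)$, inserted into \eqref{har}, give $W(R)\le C\,(W(R+1)-W(R))$, that is
\[ W(R+1)\ \ge\ \Big(1+\tfrac1C\Big)\,W(R)\qquad\text{for all }R>0. \]
If $w\not\equiv 0$ there is $R_0$ with $W(R_0)>0$, and iterating yields $W(R_0+n)\ge(1+\tfrac1C)^n W(R_0)$. Taking logarithms and dividing by $R_0+n$ forces the entropy of $W$ to be at least $\ln(1+\tfrac1C)>0$, contradicting $\mu_W=0$ (equivalently \eqref{sub-exponential}). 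Hence $W\equiv 0$ and $w=0$ a.e.

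For part (2) the only change is that admissible test functions must be supported in $M\setminus K$. Fix $R_1$ with $K\subset B_{\sigma}(R_1)$ and a fixed Lipschitz function $\eta$ vanishing on a neighborhood of $K$, equal to $1$ outside a slightly larger relatively compact neighborhood $U'\subset B_{\sigma}(R_1)$, with $\nabla\eta$ supported in $U'$. For $R>R_1$ I set $\psi_R:=\eta\,\zeta_R$, where $\zeta_R$ is the radial cutoff from part (1); then $\psi_R\in W^{1,2}_0(M\setminus K)$. Since $\eta\equiv\zeta_R\equiv 1$ on $B_{\sigma}(R)\setminus U'$ and $\nabla\eta$ lives in the fixed compact set $U'$, where $w$ has finite integral, the same two estimates now read $\int_{M\setminus K}w\,\psi_R^2\ge W(R)-C_0$ and $\int_{M\setminus K}w\,|\nabla\psi_R|^2\le 2C_1+2(W(R+1)-W(R))$ for constants $C_0,C_1$ independent of $R$. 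Feeding these into \eqref{har1} produces an affine recursion
\[ W(R+1)\ \ge\ (1+c)\,W(R)-c', \]
with $c=\tfrac{1}{2C}>0$ and $c'>0$ fixed. Rewriting it as
\[ W(R+1)-\tfrac{c'}{c}\ \ge\ (1+c)\Big(W(R)-\tfrac{c'}{c}\Big) \]
shows that if $W$ ever exceeds the threshold $c'/c$ at some $R_3>R_1$, then $W(R_3+n)-c'/c\ge(1+c)^n\big(W(R_3)-c'/c\big)\to\infty$, again forcing positive entropy. As $W$ is nondecreasing, this is impossible, so $W(R)\le c'/c$ for all large $R$; being bounded and monotone, $W$ converges, whence $\int_M w=\lim_{R\to\infty}W(R)\le c'/c<\infty$.

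The integral estimates and the construction of $\eta$ are routine; the crux is the passage from the Caccioppoli-type inequality to the multiplicative (resp.\ affine) recursion for $W$, together with the observation that monotonicity of $W$ and the zero-entropy hypothesis forbid $W$ from crossing the fixed threshold. This is precisely where part (2) differs from part (1): the presence of the compact set $K$ only yields the affine recursion, whose analysis gives boundedness of $W$, i.e.\ $\int_M w<\infty$, rather than $W\equiv 0$.
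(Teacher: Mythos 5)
Your proof is correct, but it follows a genuinely different route from the paper's. The paper tests the inequality against the exponentially decaying radial functions $\psi=e^{-\alpha r}-e^{-\alpha R}$, uses the co-area formula and an integration by parts to show that the zero-entropy hypothesis makes the weighted mass $\int_M w\,e^{-2\alpha r}$ finite, and then closes the argument by absorbing: choosing $\alpha$ with $C\alpha^2<1$ forces $\int_M w\,e^{-2\alpha r}=0$ in part (1); part (2) is handled with a more elaborate piecewise test function (linear ramp near $K$, plateau, exponential tail) yielding a bound on $\int_{B_\sigma(R_2)\setminus B_\sigma(R_0)}w$ uniform in $R_2$. You instead use plain unit-width annulus cutoffs to extract the discrete recursions $W(R+1)\ge(1+\tfrac1C)W(R)$ and $W(R+1)\ge(1+c)W(R)-c'$, and then observe that either recursion forces exponential growth of $W$ at a definite rate unless $W$ stays at (resp.\ below) the relevant threshold, contradicting $\mu_W=0$. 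Your version is more elementary — no co-area formula, no integration by parts, and the role of the zero-entropy hypothesis is isolated in a single clean contradiction — and the $\eta\zeta_R$ splitting in part (2) is a tidier way to handle the compact set $K$ than the paper's four-zone test function. What the paper's method buys in exchange is a slightly better implicit quantitative threshold: its argument shows $w\equiv 0$ already when $\mu_W<2/\sqrt{C}$, whereas your recursion gives the threshold $\ln\left(1+\tfrac1C\right)$, which is smaller for large $C$; since the theorem only assumes $\mu_W=0$, both suffice. All the analytic details you label routine (admissibility of compactly supported Lipschitz test functions, finiteness of $W(R)$ by completeness and $w\in L^1_{loc}$, positivity of $W(R_0)$ when $w\not\equiv 0$, monotonicity of $W$ propagating the threshold bound) do check out.
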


\begin{proof}
(1) Fix a point $\sigma$ in $M$ and  for any $x\in M,$ let $r(x)=d(x,\sigma)$  be the distance  between $x$ and $\sigma.$ Let $\alpha>0$ and 
let 
$\psi\in W^{1,2}_0(M)$ be the radial function such that  $\psi(x)= e^{-\alpha r(x)} - e^{-\alpha R}$ on $B_{\sigma}(R)$ and $\psi\equiv 0$ on $M\setminus 
B_{\sigma}(R).$

 Applying  inequality \eqref{har} to  $\psi$   we obtain 

 \begin{equation}\label{ine1}
\int_{B_{\sigma}(R)} w  \left(e^{-\alpha r} - e^{-\alpha R}\right)^2 \leq
C\alpha^2\int_{B_{\sigma}(R)} w  e^{-2\alpha r}.
\end{equation}

First, we  prove that the   right hand side of \eqref{ine1}  is bounded independently of $R$. 
 By the co-area formula  (see for instance  Formula 3.8 in \cite{Mo} or \cite{SY})

\begin{equation}\label{ine2}
\int_{B_{\sigma}(R)} w  e^{-2\alpha r}     
=\int_0^R \left(\int_{S_r} w   d\sigma_r \right) e^{-2\alpha r}dr      
\end{equation}

where $d\sigma_r$  is the volume element of $S_r=\partial B_{\sigma}(R)$.  Then,  integrating  
by parts,  equality \eqref{ine2}, one has

\begin{equation}
\label{ine3}
\int_{B_{\sigma}(R)} we^{-2\alpha r}     
 =  W(R)e^{-2\alpha R}  +2\alpha  \int_{0}^R  W(r)e^{-2\alpha r} dr  
 \end{equation}

As the entropy of $W$  is zero, we deduce  that the first term of the right hand side of \eqref{ine3} tends to  zero for $R\longrightarrow\infty.$ On the other hand, as 
$We^{-\alpha r}$ is bounded, the second term of the right hand side of \eqref{ine3} converges for $R\longrightarrow \infty.$
 Hence   the right hand side  of inequality \eqref{ine1}  is  bounded independently of $R$ and the left hand side is bounded as well.
 
Therefore, for any compact $K\subset M,$ there exists $R$ so that

 \begin{equation}\label{ine4}
\int_{K} w  \left(e^{-\alpha r} - e^{-\alpha R}\right)^2 \leq
C\alpha^2\int_{M} w e^{-2\alpha r}.
\end{equation}

Letting $R\longrightarrow\infty$ in \eqref{ine4} gives,  for any compact $K\subset M$ 

\begin{equation}\label{ine5}
\int_{K} w  e^{-2\alpha r}\leq C\alpha^2\int_{M} w  e^{-2\alpha r}.
\end{equation}

Hence

\begin{equation}\label{ine6}
\int_{M} w  e^{-2\alpha r}\leq C\alpha^2\int_{M} w e^{-2\alpha r}.
\end{equation}

In order to conclude the proof of (1), we choose $\alpha$ such that $C\alpha^2<1.$

(2) Fix a point $\sigma$ in $M$ and  for any $x\in M,$ let $r(x)=d(x,\sigma)$  be the distance  between $x$ and $\sigma.$ Let $0<R_0<R_1<R_2<R_3,$  
$\alpha>0$ and  let 
$\phi\in W^{1,2}_0(M)$ be  the radial function such that $\phi\equiv0$ on ${B_{\sigma}(R_0)},$ $\phi$ is linear on 
${B_{\sigma}(R_1)}\setminus {B_{\sigma}(R_0)},$  $\phi\equiv0$ on
${B_{\sigma}(R_2)}\setminus {B_{\sigma}(R_1)},$ $\phi (r) = \frac{e^{-\alpha(r-R_2)} - e^{-\alpha(R_3-R_2)}}{1- e^{-\alpha(R_3-R_2)}}$ on 
${B_{\sigma}(R_3)}\setminus {B_{\sigma}(R_2)}$ and $\phi\equiv0$ on $M\setminus {B_{\sigma}(R_3)}.$ 

Replacing $\phi$ in \eqref{har1} yields 

\begin{equation}
\label{ine7}
\int_{{B_{\sigma}(R_1)}\setminus {B_{\sigma}(R_0)}}w(\phi^2-C|\nabla\phi|^2)+\int_{{B_{\sigma}(R_2)}\setminus {B_{\sigma}(R_1)}}w+
\int_{{B_{\sigma}(R_3)}\setminus {B_{\sigma}(R_2)}}w\phi^2\leq
C\int_{{B_{\sigma}(R_3)}\setminus {B_{\sigma}(R_2)}}w|\nabla\phi|^2
\end{equation}

As the third term in the left hand side of \eqref{ine7} is positive, we can remove it and obtain

\begin{equation}
\label{ine8}
\int_{{B_{\sigma}(R_1)}\setminus {B_{\sigma}(R_0)}}w(\phi^2-C|\nabla\phi|^2)+\int_{{B_{\sigma}(R_2)}\setminus {B_{\sigma}(R_1)}}w\leq
C\int_{{B_{\sigma}(R_3)}\setminus {B_{\sigma}(R_2)}}w|\nabla\phi|^2
\end{equation}

We claim that the right hand term of \eqref{ine8} is bounded independently of $R_3.$ 

A straightforward computation shows that there exists  a constant $C_1$ independent of $R_3,$ such that on ${B_{\sigma}(R_3)}\setminus {B_{\sigma}(R_2)}$

\begin{equation}
\label{ine9}
|\nabla\phi(r)|^2=\frac{\alpha^2}{(1-e^{-\alpha(R_3-R_2)})^2}e^{-2\alpha(r-R_2)}\leq C_1e^{-2\alpha(r-R_2)}
\end{equation}

Then, in order to prove our claim, one needs to bound    $\int_{{B_{\sigma}(R_3)}\setminus {B_{\sigma}(R_2)}}we^{-2\alpha(r-R_2)}$ independently of $R_3.$

This goes exactly as  in the proof of the boundedness of the right hand side of \eqref{ine1}. For the sake of completeness we do it.
By the co-area formula  (see for instance  Formula 3.8 in \cite{Mo} )

\begin{equation}\label{ine10}
\int_{{B_{\sigma}(R_3)}\setminus {B_{\sigma}(R_2)}} w  e^{-2\alpha (r-R_2)}     
=\int_{R_2}^{R_3} \left(\int_{S_r} W   d\sigma_r \right) e^{-2\alpha (r-R_2)}dr      
\end{equation}

where $d\sigma_r$  is the volume element of $S_r=\partial B_{\sigma}(R)$.  Then,  integration  
by parts   of equality \eqref{ine10} yields

\begin{equation}
\label{ine11}
\int_{{B_{\sigma}(R_3)}\setminus {B_{\sigma}(R_2)}} w  e^{-2\alpha (r-R_2)}      
 =  W(R_3)e^{-2\alpha( R_3-R_2)}-W(R_2)  +2\alpha  \int_{R_2}^{R_3}  W(r)e^{-2\alpha (r-R_2)} dr  
 \end{equation}

As the entropy of $W$  is zero, we deduce  that the first term of the right hand side of \eqref{ine11} tends to  zero as $R_3\longrightarrow \infty.$ 
On the other hand, as 
$We^{-\alpha r}$ is bounded, the third term of the right hand side of \eqref{ine11} converges for $R_3\longrightarrow \infty.$
 Hence   our claim is proved and the left  hand term  of inequality \eqref{ine8}  is  bounded independently of $R_3.$ 
We let $R_3\longrightarrow\infty$ in  \eqref{ine8}. 
As the first term is independent of $R_2,$ $R_3,$  there exists a constant $C_2$  (independent of $R_2$) such that 

\begin{equation}
\label{ine12}
\int_{{B_{\sigma}(R_2)}\setminus {B_{\sigma}(R_0)}}w\leq C_2
\end{equation}

By letting $R_2$ go to infinity on \eqref{ine12} we obtain the result.
 \end{proof}  
\begin{remark}
We note that Theorem \ref{caccio-entropy} can be viewed as a generalization to manifolds with density of the result of Brooks \cite{Br}, where  the volume entropy is zero.

\end{remark}

\section{Applications of Caccioppoli's inequalities}
\label{caccio-appli}

In this section,   we will apply Caccioppoli's inequalities of Section \ref{caccioppoli}   to obtain 
some results   about entropies. \\
 When $M$ is a minimal hypersurface immersed in a manifold with constant   curvature,    R. Schoen, L. Simon, S.T. Yau  obtained  the following inequality (see the last inequality of the proof of Theorem 2 in \cite{SSY} )
 
 \begin{equation}
 \label{SSY-ineq}
 \int_{{B_{\sigma}(tR)}}|A|^p\leq\frac{\beta}{(1-t)^pR^p}|B_{{\sigma}}(R)|
 \end{equation}
 
 where $\beta$ is a positive constant, $t\in(0,1),$ $p\in\left(0,4+\sqrt{\frac{8}{n}}\right).$ This  inequality  
 implies a comparison between the entropy  of ${\mathcal T}_p$ and the volume entropy of   $M.$ 
 In the same spirit, using Caccioppoli's inequality \eqref{i3} in  Theorem \ref{caccio-theo}, we deduce a similar comparison
 result.\\
  In this section we assume that the ambient manifold $\mathcal{N}$ 
is an orientable Riemannian manifold with bounded sectional curvature.\\

   \begin{theorem}
 \label{comparison-stable}
 Let $M$ be a  complete, noncompact  stable  hypersurface  with constant mean curvature,  immersed  in  $\mathcal N.$ Assume $x\in[1,1+\sqrt{\frac{2}{n}}),$ then one has:

 $$\mu_{{\mathcal T}_{2x+2}}\leq\mu_{M}.$$

\end{theorem}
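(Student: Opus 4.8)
The plan is to insert a standard Lipschitz cutoff into the Caccioppoli inequality of type I (Theorem~\ref{caccio-theo}) and to read off directly the comparison between exponential growth rates. Write $p=2x+2$, so that $\mathcal{T}_{2x+2}(R)=\int_{B_\sigma(R)}\varphi^{p}$, where $\varphi=|\phi|$ is the length of the traceless second fundamental form; note $\mathcal{T}_{2x+2}$ is nondecreasing, so its entropy is well defined. Since $M$ is stable, the compact set $K$ appearing in Theorem~\ref{caccio-theo} is empty, and since $x\in[1,1+\sqrt{2/n})$ the constant $\beta_1$ is strictly positive. Hence for every $f\in C_0^\infty(M)$ one has $\beta_1\int_M f^{p}\varphi^{p}\leq \beta_2\int_M|\nabla f|^{p}+\beta_3\int_M f^{p}$ with $\beta_1>0$.

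Fix $t\in(0,1)$ and $R\geq 1$, and let $f=f_{t,R}$ be the radial Lipschitz function (a smooth approximation of which may be used, by density of $C_0^\infty$ in the relevant Sobolev space) equal to $1$ on $B_\sigma(tR)$, equal to $0$ outside $B_\sigma(R)$, and linear in $r$ on the annulus, so that $0\leq f\leq 1$ and $|\nabla f|\leq \frac{1}{(1-t)R}$ with support in $B_\sigma(R)\setminus B_\sigma(tR)$. Plugging $f$ into the inequality above and using $f\equiv 1$ on $B_\sigma(tR)$ gives $\beta_1\,\mathcal{T}_{2x+2}(tR)\leq \beta_2\int_M|\nabla f|^{p}+\beta_3\int_M f^{p}$. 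Bounding $\int_M f^{p}\leq |B_\sigma(R)|$ and $\int_M|\nabla f|^{p}\leq \frac{|B_\sigma(R)|}{((1-t)R)^{p}}$, and absorbing the coefficient (bounded for $R\geq1$) into a constant $C_t$ depending on $t,\beta_1,\beta_2,\beta_3$, I obtain
\begin{equation*}
\mathcal{T}_{2x+2}(tR)\leq C_t\,|B_\sigma(R)|,\qquad R\geq 1.
\end{equation*}

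It remains to convert this into an inequality between entropies. Writing $S=tR$, taking logarithms and dividing by $S$ yields $\frac{\ln \mathcal{T}_{2x+2}(S)}{S}\leq \frac{\ln C_t}{S}+\frac{1}{t}\cdot\frac{\ln|B_\sigma(S/t)|}{S/t}$. Taking $\limsup_{S\to\infty}$, the first term vanishes and the reparametrization $S/t\to\infty$ gives $\mu_{\mathcal{T}_{2x+2}}\leq \frac{1}{t}\,\mu_M$ for every $t\in(0,1)$; letting $t\to 1^-$ yields the claimed bound $\mu_{\mathcal{T}_{2x+2}}\leq\mu_M$.

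Since Theorem~\ref{caccio-theo} is assumed, the argument is largely routine; the one point to watch is the role of the zeroth order term $\beta_3\int_M f^{p}$. In the minimal SSY estimate \eqref{SSY-ineq} this term is absent, so the entire right-hand side carries the decaying factor $R^{-p}$ and one recovers polynomial control of the total curvature by the volume. Here $\beta_3>0$ forces $\mathcal{T}_{2x+2}(tR)\lesssim |B_\sigma(R)|$ with no decaying factor, which is exactly why the conclusion is only a comparison of exponential growth rates (entropies) and not of polynomial volume growth. The $t\to 1$ limit is what upgrades the crude constant $1/t$ to the sharp constant $1$.
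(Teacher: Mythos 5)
Your proof is correct and follows essentially the same route as the paper: the same radial cutoff ($f\equiv 1$ on $B_\sigma(tR)$, linear on the annulus, vanishing outside $B_\sigma(R)$) is inserted into the type~I Caccioppoli inequality with $K=\emptyset$, yielding $\beta_1\mathcal{T}_{2x+2}(tR)\leq |B_\sigma(R)|\bigl(\beta_2(1-t)^{-(2x+2)}R^{-(2x+2)}+\beta_3\bigr)$, after which one takes logarithms, divides by $R$, and passes to the limit. Your write-up is merely more explicit about the reparametrization $S=tR$ and the final $t\to 1^-$ step, which the paper leaves implicit.
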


\begin{proof}  
 Fix  $t\in(0,1)$ and  choose a radial test function $f\equiv1$   on  $B_{{\sigma}} (tR)$, $f\equiv 0$ on $M\setminus B_{{\sigma}}(R)$ and linear on the annulus $B_{{\sigma}}(R)\setminus B_{{\sigma}}(tR).$ 
By a straightforward computation,  inequality \eqref{i3} with $K\subset B_{{\sigma}}(tR)$ applied to $f$ yields

\begin{equation}
\label{caccio-ball}
\beta_1\int_{B_{{\sigma}}(t R)} \varphi^{2x+2}\leq |B_{{\sigma}}(R)|\left(\frac{\beta_2}{(1-t)^{2x+2}R^{2x+2}}+\beta_3\right).
\end{equation}

In \eqref{caccio-ball}, we take  the logarithm,  divide by $R$ and take the limit  of both sides for $R\longrightarrow\infty$ and we   obtain the result.

\end{proof}

\begin{remark} 
\nero The H\"{o}lder inequality gives for any $p>0$ and $q\ge p$: 
\begin{equation}\label{holder-entrop}
\frac{1}{p}\mu_{{\mathcal T}_{p}}\leq (\frac{1}{p}-\frac{1}{q})\mu_{M}+\frac{1}{q}\mu_{{\mathcal T}_{q}}
\end{equation}
\nero Note that  inequality  \eqref{n-red-exp} yields the following comparison between entropies of total curvature
$$\mu_{{\mathcal T}_{2x+2}}\leq \mu_{{\mathcal T}_{2x}}.$$
This last inequality combined with inequality (\ref{holder-entrop}) gives an alternative proof of the inequality of Theorem  \ref{comparison-stable} 
\end{remark}

Another   application of Caccioppoli's inequality  yields:

\begin{theorem}
\label{tysk}
Let $M$ be a complete, noncompact, hypersurface immersed with constant mean curvature $H\not = 0$  in a manifold $\mathcal N$ with constant   curvature $c.$ Assume $M$ has finite index and dimension $n\leq 5.$ Then  provided either

(1) $c=0$, or $c=1$,    $x\in[1,x_2)$ 

or 

(2) $c=-1,$ $\varepsilon>0,$ $x\in[1,x_2-\varepsilon],$ $H^2\geq g_n(x).$

one has 

\begin{equation}
\label{equivalence}
\int_M\varphi^{2x}<\infty \ {\rm if \ and \ only \ if} \  \mu_{{\mathcal T}_{2x}}\equiv0.
\end{equation}

\end{theorem}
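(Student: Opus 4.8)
The statement is an equivalence
\begin{equation*}
\int_M \varphi^{2x} < \infty \quad \Longleftrightarrow \quad \mu_{{\mathcal T}_{2x}} = 0,
\end{equation*}
so I would prove the two implications separately. The forward direction is the easy one: if $\int_M \varphi^{2x} < \infty$, then the total $2x$-curvature functional ${\mathcal T}_{2x}(R) = \int_{B_\sigma(R)} \varphi^{2x}$ is a bounded nondecreasing function of $R$, hence $\ln {\mathcal T}_{2x}(R)$ stays bounded and $\mu_{{\mathcal T}_{2x}} = \limsup_{R\to\infty} \frac{\ln {\mathcal T}_{2x}(R)}{R} = 0$ immediately from the definition \eqref{p-entropy} of the entropy. (Strictly, one should note that if $\int_M \varphi^{2x}=0$ the functional vanishes and the entropy is still $0$ by convention.)

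The substantive content is the reverse implication: assuming $\mu_{{\mathcal T}_{2x}} = 0$, deduce $\int_M \varphi^{2x} < \infty$. Here the plan is to apply part (2) of Theorem \ref{caccio-entropy} with the weight $w := \varphi^{2x}$. That theorem requires two inputs. First, the weighted functional $W(R) = \int_{B_\sigma(R)} w = {\mathcal T}_{2x}(R)$ must have zero entropy — this is exactly the hypothesis $\mu_{{\mathcal T}_{2x}} = 0$. Second, $w$ must satisfy the Caccioppoli-type inequality \eqref{har1}, namely
\begin{equation*}
\int_{M\setminus K} w\psi^2 \leq C \int_{M\setminus K} w|\nabla\psi|^2, \qquad \forall\, \psi \in W^{1,2}_0(M\setminus K),
\end{equation*}
for some compact $K$ and constant $C$. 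This is precisely the Caccioppoli inequality of type III, inequality \eqref{j-bis} of Theorem \ref{caccio-H}, with $f$ in place of $\psi$, $C = {\mathcal D}/\gamma$, and the same compact set $K$. The role of the dimension restriction $n\leq 5$ and of the case hypotheses (1) and (2) on $c$, $x$, $H$ is exactly to guarantee that Theorem \ref{caccio-H} applies, so I would invoke it under these matching hypotheses. Once both inputs are verified, Theorem \ref{caccio-entropy}(2) yields $\int_M w = \int_M \varphi^{2x} < \infty$, which is the claim.

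The main obstacle — and the only real point requiring care — is matching the hypotheses so that Theorem \ref{caccio-H} delivers \eqref{har1} with $w = \varphi^{2x}$. One must check that the range of $x$ in Theorem \ref{tysk} coincides with that in Theorem \ref{caccio-H} (it does, by design: $x\in[1,x_2)$ for $c=0,1$ and $x\in[1,x_2-\varepsilon]$ with $H^2\geq g_n(x)$ for $c=-1$), and that the exponent $2x$ appearing in the weight $\varphi^{2x}$ is the same as in \eqref{j-bis}. A subtle point is that Theorem \ref{caccio-H} produces a possibly nonempty compact $K$, so the conclusion of Theorem \ref{caccio-entropy}(2) is finiteness of $\int_M w$ rather than $w\equiv 0$; since we only claim finiteness, this is consistent, and no further argument (such as extending the inequality across $K$) is needed. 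Thus the proof reduces to quoting Theorem \ref{caccio-H} to furnish hypothesis \eqref{har1} and Theorem \ref{caccio-entropy}(2) to conclude, with the trivial direction handled directly from the definition of entropy.
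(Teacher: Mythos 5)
Your proposal is correct and follows essentially the same route as the paper: the forward implication is immediate from the definition of entropy, and the reverse implication applies part (2) of Theorem \ref{caccio-entropy} with $w=\varphi^{2x}$, using the Caccioppoli inequality \eqref{j-bis} of Theorem \ref{caccio-H} to supply hypothesis \eqref{har1}. The only detail the paper makes explicit that you leave implicit is the remark that $x_2>1$ precisely when $n\leq 5$, which is what makes the range $x\in[1,x_2)$ nonempty.
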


\begin{proof} 
We first observe that $x_2 $,   given in \eqref{roots}, is greater than one   if and only if $n\leq 5$.\\
The first implication is clear by the definition of entropy. Vice-versa, as inequality \eqref{j-bis} holds in $M,$  we can apply (2) of Theorem \ref{caccio-entropy} with $w=\varphi^{2x}$  in order to have the result. 
 \end{proof}
 A direct consequence of   Theorem  \ref{tysk} is the following    result  in the spirit of  a result of  J.  Tysk concerning 
 Euclidean minimal hypersurfaces (see the main theorem  of \cite{Ty}).
 \begin{corollary} \label{cortysk}
Let $M$ be a complete, noncompact, hypersurface immersed with constant mean curvature $H\not = 0$  in a manifold $\mathcal N$ with constant   curvature $c.$ 
Assume $M$ has finite index and dimension $n\leq 5.$   Then  provided either\\
 (1) $c=0$ or  $c=1$
  
  or 
  
 (2)  $c=-1$ and $H^2 \geq  g_n(1)$ ($g_n(x)$ is defined in \eqref{function-g}) \\
 one has 
$$\mu_{{\mathcal T}_{2}}=0 \ {\rm implies}\  \int_M\varphi^{n}<\infty$$ 
\end{corollary}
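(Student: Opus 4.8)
The plan is to derive Corollary \ref{cortysk} as an essentially immediate consequence of Theorem \ref{tysk}, which establishes the equivalence $\int_M \varphi^{2x} < \infty \iff \mu_{{\mathcal T}_{2x}} = 0$ for $x$ in the admissible range. The first step is to identify the correct value of the exponent. Setting $x = \frac{n}{2}$ turns the integrand $\varphi^{2x}$ into $\varphi^n$, which is exactly the total curvature appearing in the conclusion, so I would aim to apply Theorem \ref{tysk} with this choice. In parallel, the hypothesis $\mu_{{\mathcal T}_2} = 0$ corresponds to the value $x = 1$ in the notation $\mu_{{\mathcal T}_{2x}}$, so the two exponents in play are $x=1$ for the hypothesis and $x = \frac{n}{2}$ for the desired integrability.

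First I would verify that $x = \frac{n}{2}$ lies in the admissible range of Theorem \ref{tysk}. Recall from the proof of that theorem that $x_2 > 1$ precisely when $n \leq 5$; more generally I expect that for $n \leq 5$ one has $\frac{n}{2} < x_2$ (in case (1), $c = 0$ or $c = 1$) after a direct numerical check using the formula \eqref{roots} for $x_2$. This range verification is the one genuinely computational point: one must confirm that $x = n/2$ satisfies $x \in [1, x_2)$ for each dimension $n \in \{2,3,4,5\}$, and in the hyperbolic case $c = -1$ that the curvature condition $H^2 \geq g_n(x)$ together with $x \leq x_2 - \varepsilon$ remains compatible. The hypothesis $H^2 \geq g_n(1)$ is stated at the value $x=1$; since the function $g_n$ and the admissibility conditions are what govern the applicability of Theorem \ref{caccio-H}, I would need to check that the curvature bound propagates appropriately, likely by monotonicity or by re-applying the Caccioppoli inequality uniformly for all $x$ in a suitable subinterval.

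Next I would chain the two facts together. The hypothesis $\mu_{{\mathcal T}_2} = 0$ means the entropy of the total $2$-curvature vanishes; by Theorem \ref{tysk} applied at $x = 1$ this is equivalent to $\int_M \varphi^2 < \infty$. The goal is to upgrade this finite $L^2$-norm of $\varphi$ to a finite $L^n$-norm. Here I would invoke the reverse iteration provided by the reduction-of-exponent inequality: Theorem \ref{reduction-exponent} (or its stable version \eqref{n-red-exp}) gives $\int_{M\setminus K}\varphi^{2x+2} \leq {\mathcal S}\int_{M\setminus K}\varphi^{2x}$ for $x \in [1, 1+\sqrt{2/n})$. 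Starting from finiteness of $\int_M \varphi^2$ and iterating this inequality a bounded number of times raises the exponent in steps of $2$, allowing one to bootstrap finiteness of $\int_M \varphi^{2x}$ up through $\varphi^n$. Because $n \leq 5$, only finitely many iterations are needed and the exponent stays within the admissible window where the constants are controlled.

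The main obstacle I anticipate is the bookkeeping of admissible exponent ranges: the reduction inequality \eqref{reduction-ineq} is valid only for $x < 1 + \sqrt{2/n}$, so a single application raises the exponent from $2x$ to $2x+2$ only while $x$ stays below this threshold, and one must confirm that the chain of exponents $2, 4, \dots, n$ can be reached by steps that each remain admissible, possibly by combining with the finite-index structure (the compact set $K$ outside which the estimates hold) and with integrability on the compact piece $K$, where $\varphi^n$ is automatically integrable by smoothness. Reconciling the threshold $1 + \sqrt{2/n}$ of Theorem \ref{reduction-exponent} with the larger range $[1, x_2)$ of Theorem \ref{tysk}, and ensuring the hyperbolic curvature condition $H^2 \geq g_n(1)$ suffices throughout, is where the argument requires care; once the exponent $n/2$ is certified admissible, the conclusion $\int_M \varphi^n < \infty$ follows directly from Theorem \ref{tysk} at $x = n/2$.
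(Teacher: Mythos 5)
Your core route is the paper's: apply Theorem \ref{tysk} at $x=1$ to convert the hypothesis $\mu_{{\mathcal T}_{2}}=0$ into $\int_M\varphi^{2}<\infty$, and then bootstrap the exponent upward using the reverse H\"older inequality of Theorem \ref{reduction-exponent}. However, two points in your write-up need correction. First, the opening plan of applying Theorem \ref{tysk} at $x=n/2$ cannot work and your expectation that $n/2<x_2$ for $n\leq 5$ is numerically false: from \eqref{roots} one computes $x_2\approx 1.70,\ 1.43,\ 1.2$ for $n=3,4,5$ respectively, so $n/2\geq x_2$ already for $n=4,5$ and the exponent $x=n/2$ is inadmissible in Theorem \ref{tysk} (and in Theorem \ref{caccio-H} behind it). Fortunately this step is also unnecessary: once $\int_M\varphi^{n}<\infty$ has been produced by the bootstrap, there is nothing left to prove, so only the $x=1$ instance of Theorem \ref{tysk} is ever needed. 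This also disposes of your worry about propagating the condition $H^2\geq g_n(x)$ to larger $x$: Theorem \ref{reduction-exponent} carries no hypothesis on $H$ or on $c$ beyond constancy of the curvature, so $H^2\geq g_n(1)$ suffices.

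Second, and this is the genuine gap, your iteration ``in steps of $2$'' along the chain $2,4,\dots,n$ can never reach the odd dimensions $n=3$ and $n=5$: starting from $\int\varphi^{2}<\infty$, Theorem \ref{reduction-exponent} at $x=1$ gives $\int\varphi^{4}<\infty$, and a second application would require $x=2$, which lies outside $[1,1+\sqrt{2/n})$ for every $n\geq 2$ and would in any case produce the exponent $6$, not $5$. The missing ingredient is an ordinary H\"older interpolation between the two even exponents already obtained, namely $\int\varphi^{3}\leq\bigl(\int\varphi^{2}\bigr)^{1/2}\bigl(\int\varphi^{4}\bigr)^{1/2}<\infty$, after which Theorem \ref{reduction-exponent} applied at $x=3/2$ (admissible, since $3/2<1+\sqrt{2/5}\approx 1.63$) yields $\int\varphi^{5}<\infty$. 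This interpolation-plus-one-more-step is exactly how the paper closes the argument; you flagged the exponent bookkeeping as the delicate point but did not supply the device that resolves it.
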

\begin{proof} 
 By Theorem \ref{tysk}, we have  $\int_M\varphi^{2}<\infty $. Using the reverse Holder inequality of Theorem  \ref{reduction-exponent} for $x=1$, we obtain $\int_M\varphi^{4}<\infty.$ Using Holder inequality, we obtain $\int_M\varphi^{3}<\infty.$
 Using the reverse Holder inequality of Theorem  \ref{reduction-exponent} for $x=\frac{3}{2}$, we obtain $\int_M\varphi^{5}<\infty.$
\end{proof}
\begin{remark} 
\nero As $\mu_M\geq \mu_{{\mathcal T}_{2x}},$ one can replace  the hypothesis $\mu_{{\mathcal T}_{2x}}\equiv 0$    
 in Theorem \ref{tysk}    and   the hypothesis  $\mu_{{\mathcal T}_{2}}\equiv 0$  in Corollary \ref{cortysk} by 
$\mu_M\equiv 0.$ 

\nero  P. B\'erard, M. do Carmo and W. Santos  \cite{BDS} proved that  if $M$ is a complete  hypersurface in 
$\h^{n+1},$ with constant mean curvature $H^2<1,$ such that $\int_M\varphi^n<\infty,$ then $M$ has finite index. 
Notice that the converse is false  as it is shown  by the examples of A. da Silveira \cite{S}.
\end{remark}

\section{Some  answers to do Carmo's question }
\label{docarmo-section}

In this Section we apply our previous results in order to answer, at least in some cases, to the following do Carmo's question 
\cite{DoC1} : {\em is a noncompact, complete, stable, constant mean curvature  hypersurface of $\Rn,$ $n\geq 3,$  
necessarily  minimal? }

It will be clear in  the following in which cases we are able to give an answer.

   \begin{theorem}
\label{carmo-volume-entropy}
There is no complete, noncompact, finite index   hypersurface $M$   immersed in a manifold $\mathcal N,$
provided  the mean curvature function $H$ 
  satisfies $ nH^2+Ric(\nu,\nu)\geq \delta,$ where $\delta$ is a constant such that $\delta > \frac{\mu_M^2}{4}.$
\end{theorem}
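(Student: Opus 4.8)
The plan is to derive a contradiction from the existence of such a hypersurface $M$ by combining the stability-type information encoded in the mean curvature bound with the Brooks upper bound on $\lambda_0^{\mathrm{ess}}(M)$. The key observation is that the hypothesis $nH^2 + Ric(\nu,\nu) \geq \delta$ controls the potential appearing in the stability operator $L = \Delta + Ric(\nu,\nu) + |A|^2$. Indeed, by the Cauchy--Schwarz inequality applied to the eigenvalues of the second fundamental form, one has $|A|^2 \geq nH^2$, so that the potential satisfies $V := Ric(\nu,\nu) + |A|^2 \geq Ric(\nu,\nu) + nH^2 \geq \delta$. This pointwise lower bound on $V$ is what links the geometric assumption to the spectrum of $-\Delta$.

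First I would use the finite index hypothesis. Since $M$ has finite index, by Proposition 2.1 of \cite{INS} (recalled in Section \ref{stability}) there is a compact subset $K \subset M$ such that $M \setminus K$ is stable, i.e. $Q(f,f) \geq 0$ for all $f \in C_0^\infty(M \setminus K)$. Writing this out, stability on $M \setminus K$ means
\begin{equation*}
\int_{M\setminus K} |\nabla f|^2 \geq \int_{M\setminus K} V f^2 \geq \delta \int_{M\setminus K} f^2, \qquad \forall f \in C_0^\infty(M\setminus K),
\end{equation*}
where the second inequality uses $V \geq \delta$. Taking the infimum of the Rayleigh quotient over $f \in C_0^\infty(M \setminus K)$ gives $\lambda_0(M \setminus K) \geq \delta$. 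Since this holds for the particular compact $K$, and the bottom of the essential spectrum is the supremum over all compacts, we obtain $\lambda_0^{\mathrm{ess}}(M) = \sup_{K'} \lambda_0(M \setminus K') \geq \lambda_0(M \setminus K) \geq \delta$.

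Next I would invoke the Brooks upper bound. Because $M$ has constant (or at least bounded, nonzero) mean curvature bounded away from zero — more precisely because the potential is bounded below by a positive constant, $M$ cannot be compact and one checks it has infinite volume — Corollary \ref{ext-brooks-theo}, or directly the intrinsic Brooks bound of Theorem \ref{brooks}, gives $\lambda_0^{\mathrm{ess}}(M) \leq \frac{\mu_M^2}{4}$. Combining the two estimates yields $\delta \leq \lambda_0^{\mathrm{ess}}(M) \leq \frac{\mu_M^2}{4}$, which directly contradicts the hypothesis $\delta > \frac{\mu_M^2}{4}$. Hence no such $M$ exists.

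The main obstacle I anticipate is justifying that $M$ has infinite volume, so that Theorem \ref{brooks} applies with a nontrivial upper bound rather than the degenerate $\mu_M = \lambda_0 = 0$ conclusion of the finite-volume case. Here I would argue that the lower bound $V \geq \delta > 0$ forces the volume to be infinite: if the volume were finite then $\mu_M = 0$ and $\lambda_0(M) = 0$, but stability of $M \setminus K$ combined with $V \geq \delta$ produces a strictly positive lower bound on $\lambda_0(M\setminus K)$, which is incompatible with the finite-volume regime where every such quantity vanishes. Alternatively, one appeals to \cite{Fr} as in Corollary \ref{volume-estimate2}, since bounded mean curvature in a space of bounded geometry already guarantees infinite volume. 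The remaining delicate point is the passage from the quadratic-form stability inequality on $M \setminus K$ to the inequality $\lambda_0(M\setminus K) \geq \delta$, which is immediate once one notes that $-\int f L f = Q(f,f)$ expands exactly into $\int |\nabla f|^2 - \int V f^2$ after integration by parts.
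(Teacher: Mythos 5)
Your proof is correct and follows essentially the same route as the paper: finite index gives a compact $K$ with $M\setminus K$ stable, the pointwise bound $|A|^2\geq nH^2$ turns the hypothesis into $\lambda_0(M\setminus K)\geq\delta$, and Brooks' inequality $\lambda_0(M\setminus K)\leq\lambda_0^{ess}(M)\leq\frac{\mu_M^2}{4}$ yields the contradiction. The only caveat is your side remark that finite volume would force $\lambda_0(M\setminus K)=0$, which is false in general (finite-volume hyperbolic cusps have $\lambda_0^{ess}=\frac{(n-1)^2}{4}>0$), but this does not affect the core argument, which the paper runs identically without dwelling on the infinite-volume hypothesis.
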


\begin{proof}[Proof of Theorem \ref{carmo-volume-entropy}]
Assume such $M$ exists. As $M$ has finite index, there exists a compact $K$ in $M$ such that $M\setminus K$ is stable. Therefore, for any 
$f\in C_0^{\infty}(M\setminus K),$ one has

\begin{equation}
\label{carmo-one}
0\leq Q(f,f)=\int_{M\setminus K}|\nabla f|^2-(|A|^2+Ric(\nu,\nu))f^2.
\end{equation}

As $|A|^2+Ric(\nu,\nu)\geq nH^2+Ric(\nu,\nu)\geq\delta,$ \eqref{carmo-one} yields 

\begin{equation}
\label{carmo-two}
0\leq \int_{M\setminus K}|\nabla f|^2-\delta\int_{M\setminus K}f^2,
\end{equation}

that is $\lambda_0(M\setminus K)\geq \delta.$    This contradicts  inequality \eqref{brooks-ineq} of
 Brooks'  Theorem.
\end{proof}

\begin{remark} A result  weaker  than  Theorem \ref{carmo-volume-entropy} was proved by  M. do Carmo and D. Zhou \cite{DZ}. 
\end{remark}

A straighforward application of Theorem \ref{carmo-volume-entropy} gives the following result.
\begin{corollary}
\label{carmo-euclidean-hyp}
  If $M$ is  a constant mean curvature hypersurface of finite index  immersed in a space of constant curvature $c,$ 
  then   $n(H^2 + c ) \leq \frac{\mu_M^2}{4}    $.  In particular,  if $H^2 + c >0$, the volume growth of $M$ is exponential. 
 \end{corollary}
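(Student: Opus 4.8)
The plan is to derive Corollary \ref{carmo-euclidean-hyp} as a direct specialization of Theorem \ref{carmo-volume-entropy} to the case where the ambient manifold $\mathcal N$ has constant sectional curvature $c$. The starting observation is that in a space form of constant curvature $c$, the Ricci curvature is isotropic: for the unit normal $\nu$ one has $Ric(\nu,\nu) = nc$, since each of the $n$ sectional curvatures through $\nu$ equals $c$. Substituting this into the quantity controlled by Theorem \ref{carmo-volume-entropy} gives
\begin{equation*}
nH^2 + Ric(\nu,\nu) = nH^2 + nc = n(H^2 + c).
\end{equation*}

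Next I would argue by contradiction, running Theorem \ref{carmo-volume-entropy} in contrapositive form. Suppose the asserted inequality fails, i.e. $n(H^2+c) > \frac{\mu_M^2}{4}$. Then I can choose a constant $\delta$ with
\begin{equation*}
\frac{\mu_M^2}{4} < \delta \leq n(H^2+c),
\end{equation*}
so that the hypothesis $nH^2 + Ric(\nu,\nu) = n(H^2+c) \geq \delta > \frac{\mu_M^2}{4}$ of Theorem \ref{carmo-volume-entropy} is satisfied. That theorem then asserts the nonexistence of a complete, noncompact, finite index hypersurface with these properties, contradicting the existence of the given $M$. Hence $n(H^2+c) \leq \frac{\mu_M^2}{4}$, which is the first claim.

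For the final sentence, I would simply note that if $H^2 + c > 0$, then the inequality just established forces $\mu_M^2 \geq 4n(H^2+c) > 0$, so $\mu_M > 0$; by the definition of the volume entropy in \eqref{volume-entropy}, a positive entropy means precisely that the volume of geodesic balls in $M$ grows exponentially. I do not anticipate a genuine obstacle here, since the corollary is essentially a substitution followed by a one-line logical rearrangement; the only point requiring a moment's care is the identification $Ric(\nu,\nu) = nc$ in a space form (which is standard for hypersurfaces of $\mathcal{S}(c)$) and the bookkeeping of the dimension convention $n = \dim M$ so that the Ricci trace over the $n$ tangent directions orthogonal to $\nu$ produces the factor $n$ consistently with the estimate $|A|^2 \geq nH^2$ used implicitly through Theorem \ref{carmo-volume-entropy}.
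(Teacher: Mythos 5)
Your proof is correct and follows exactly the route the paper intends: the paper gives no separate argument, stating only that the corollary is ``a straightforward application of Theorem \ref{carmo-volume-entropy}'', and your substitution $Ric(\nu,\nu)=nc$ together with the contrapositive reading of that nonexistence theorem is precisely that application. The final observation that $H^2+c>0$ forces $\mu_M>0$, hence exponential volume growth, is also the intended reading.
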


  As an   application of  Corollary  \ref{carmo-euclidean-hyp} we obtain the following result.

 \begin{corollary}
There is no complete, noncompact, finite index, constant mean curvature  hypersurface $M$  immersed in a manifold $\mathcal N,$ with 
$\mu_M\equiv0,$ provided either\\
(1) ${\mathcal N}={\mathbb S}^{n+1}$ or\\
(2) ${\mathcal N}={\mathbb R}^{n+1}$ and  $H\not=0,$ or\\
(3) ${\mathcal N}={\mathbb H}^{n+1},$ $H>1.$
\end{corollary}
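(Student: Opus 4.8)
The plan is to derive this corollary directly from Corollary \ref{carmo-euclidean-hyp}, which for a constant mean curvature hypersurface of finite index immersed in a space form of constant curvature $c$ gives the inequality
\begin{equation*}
n(H^2+c)\leq\frac{\mu_M^2}{4}.
\end{equation*}
The strategy is simply to specialize $c$ and $H$ to each of the three ambient manifolds listed, impose the hypothesis $\mu_M\equiv 0$ (which forces the right-hand side to vanish), and show that the resulting inequality $n(H^2+c)\leq 0$ is violated in each case, yielding the asserted nonexistence by contradiction.

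First I would suppose, for contradiction, that such an $M$ exists in the stated ambient manifold. Since $M$ is a complete, noncompact, finite index hypersurface of constant mean curvature immersed in a space form, Corollary \ref{carmo-euclidean-hyp} applies and, together with $\mu_M\equiv 0$, gives $n(H^2+c)\leq 0$, i.e. $H^2+c\leq 0$. Now I treat the three cases. In case (1), $\mathcal N=\mathbb S^{n+1}$ has $c=1$, so $H^2+c=H^2+1\geq 1>0$, a contradiction. In case (2), $\mathcal N=\mathbb R^{n+1}$ has $c=0$, so $H^2+c=H^2>0$ precisely because the hypothesis $H\neq 0$ rules out the minimal case, again a contradiction. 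In case (3), $\mathcal N=\mathbb H^{n+1}$ has $c=-1$, so $H^2+c=H^2-1>0$ exactly when $H>1$ (using $H$ positive after orienting by the mean curvature vector), which is the stated assumption, giving the final contradiction.

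The three cases are entirely parallel and require no further estimates beyond the sign analysis just described, so there is no serious computational obstacle. The only point demanding care is ensuring that the hypotheses in each case are \emph{exactly} the ones that make the strict inequality $H^2+c>0$ hold: in case (2) the role of $H\neq 0$ is to exclude the degenerate minimal hypersurfaces (which of course do exist, e.g. hyperplanes, and are precisely the answer to do Carmo's question), and in case (3) the threshold $H>1$ is sharp because horospheres and equidistant hypersurfaces with $H\leq 1$ are genuine stable examples. Thus the main conceptual content is recognizing that $\mu_M\equiv 0$ collapses Corollary \ref{carmo-euclidean-hyp} into the sign condition $H^2+c\leq 0$, and that each listed hypothesis is designed to contradict it.
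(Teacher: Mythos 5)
Your proof is correct and follows exactly the route the paper intends: the corollary is stated as a direct application of Corollary \ref{carmo-euclidean-hyp}, and your sign analysis of $n(H^2+c)\leq \mu_M^2/4=0$ in the three cases is precisely the argument. Nothing is missing.
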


  One can replace the entropy  
   $\mu_M$  by the entropy of the ambient space $\mu_\mathcal{N}$   by imposing some  additional geometric conditions  on $M$ and $\mathcal N,$  that  guarantee the embeddedness   of the tube around $M.$ 
  
 \begin{theorem}
\label{embedded-hyp-eucl} 
There is no complete, noncompact, finite index $M $ that is   properly embedded in a simply connected manifold $\mathcal N$, where 
$M$ and $\mathcal N$ have bounded curvature  and
provided   $H\geq \delta_1 > 0$ and  $ nH^2 + Ric(\nu,\nu) \geq \delta_2 >  \frac{\mu_{\mathcal N}^2}{4} $   for some
positive $\delta_1,  \delta_2$.\end{theorem}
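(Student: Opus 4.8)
\textbf{Proof proposal for Theorem \ref{embedded-hyp-eucl}.}

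The plan is to argue by contradiction, combining the embedded half-tube machinery of Theorem \ref{tubeplonge} (which upgrades intrinsic entropy control to ambient entropy control via Corollary \ref{volume-estimate2}) with the spectral contradiction already used in Theorem \ref{carmo-volume-entropy}. Suppose such an $M$ exists. Since $M$ is properly embedded in the simply connected bounded-geometry manifold $\mathcal N$, has bounded curvature, and satisfies $H \geq \delta_1 > 0$, the hypotheses of Corollary \ref{volume-estimate2} are met. That corollary yields the key entropy comparison
\begin{equation*}
\mu_M \leq \mu_M^{\mathcal N} \leq \mu_{\mathcal N}.
\end{equation*}
Thus the intrinsic volume entropy of $M$ is controlled by the ambient entropy: $\mu_M^2/4 \leq \mu_{\mathcal N}^2/4$.

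Next I would feed this into the curvature-spectrum argument. Because $M$ has finite index, there is a compact subset $K \subset M$ such that $M \setminus K$ is stable, so for all $f \in C_0^{\infty}(M \setminus K)$ one has $Q(f,f) \geq 0$, i.e.
\begin{equation*}
\int_{M \setminus K} |\nabla f|^2 \geq \int_{M \setminus K} \left( |A|^2 + Ric(\nu,\nu) \right) f^2.
\end{equation*}
Using $|A|^2 \geq nH^2$ together with the standing hypothesis $nH^2 + Ric(\nu,\nu) \geq \delta_2$, the integrand on the right is bounded below by $\delta_2 f^2$, which gives $\lambda_0(M \setminus K) \geq \delta_2$, and hence $\lambda_0^{ess}(M) \geq \delta_2$. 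This is exactly the step that reproduces the reasoning of Theorem \ref{carmo-volume-entropy}, now with the ambient entropy bound in hand.

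Finally I would close the contradiction. Since $M$ has infinite volume (it is a complete noncompact properly embedded hypersurface of bounded mean curvature, see \cite{Fr}), Brooks' Theorem \ref{brooks} applies and gives $\lambda_0^{ess}(M) \leq \mu_M^2/4$. Chaining the inequalities produces
\begin{equation*}
\delta_2 \leq \lambda_0^{ess}(M) \leq \frac{\mu_M^2}{4} \leq \frac{\mu_{\mathcal N}^2}{4},
\end{equation*}
which directly contradicts the hypothesis $\delta_2 > \mu_{\mathcal N}^2/4$. Therefore no such $M$ exists.

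The main obstacle, and the genuinely new ingredient beyond the proof of Theorem \ref{carmo-volume-entropy}, is the passage $\mu_M \leq \mu_{\mathcal N}$: replacing the intrinsic entropy $\mu_M$ appearing in the Brooks bound by the ambient entropy $\mu_{\mathcal N}$ requires the full strength of the embedded half-tube construction and the Weyl-type volume comparison of Corollary \ref{volume-estimate}, which is why the geometric hypotheses (proper embedding, simple connectivity, bounded curvature, $H$ bounded away from zero) are all needed here, whereas Theorem \ref{carmo-volume-entropy} needed none of them.
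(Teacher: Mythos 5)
Your proof is correct and follows exactly the paper's route: the paper's own proof is the one-line ``apply Corollary \ref{volume-estimate2} and Theorem \ref{carmo-volume-entropy}'', and you have simply unpacked those two ingredients (the half-tube entropy comparison $\mu_M \leq \mu_{\mathcal N}$ and the stability/Brooks spectral contradiction) in full detail. No discrepancies.
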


\begin{proof}
  One applies Corollary \ref{volume-estimate2} and Theorem \ref{carmo-volume-entropy}
\end{proof}

As  an immediate consequence of Theorem \ref{embedded-hyp-eucl} one has the following result.

\begin{corollary}
 If $M$ is a  properly embedded hypersurface of bounded  curvature and constant mean curvature with finite index in a space form of curvature $c$ :\\
 \nero If  $c\geq 0,$  then $c=0$ and $M$ is minimal. \\
\nero  If $c< 0$, then   $H^2 \leq - c\left( \frac{n}{4}+1 \right)  $
  \end{corollary}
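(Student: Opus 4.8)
The plan is to derive this corollary directly from Theorem~\ref{embedded-hyp-eucl} by specializing the ambient manifold to a simply-connected space form $\mathcal N=\spaceform$ of constant curvature $c$ and computing the relevant quantities $Ric(\nu,\nu)$ and $\mu_{\mathcal N}$. First I would record that in a space form of curvature $c$, every sectional curvature equals $c$, so for the unit normal $\nu$ one has $Ric(\nu,\nu)=nc$. Hence the curvature hypothesis $nH^2+Ric(\nu,\nu)\geq\delta_2$ of Theorem~\ref{embedded-hyp-eucl} becomes $n(H^2+c)\geq\delta_2$ for a suitable constant, and the condition we must check to apply the theorem is that this quantity strictly exceeds $\mu_{\mathcal N}^2/4$.

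The second ingredient is the volume entropy of the space form. For $c\geq 0$ the space form is $\bR^{n+1}$ (if $c=0$) or $\bS^{n+1}$ (if $c>0$); in both cases the volume growth of geodesic balls is at most polynomial, so $\mu_{\mathcal N}=0$. For $c<0$, after normalizing $c=-k^2$, the space form is $\Hn$ with volume entropy $\mu_{\mathcal N}=nk=n\sqrt{-c}$, so $\mu_{\mathcal N}^2/4 = n^2(-c)/4$. I would state these as the standard values and cite Lemma~\ref{compar1} (which gives $\mu_{\mathcal N}\leq nk$ from $Ric_{\mathcal N}\geq -n k^2$) together with the fact that hyperbolic space realizes this bound.

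Now I would split into the two cases of the statement. For $c\geq 0$: since $\mu_{\mathcal N}=0$, Theorem~\ref{embedded-hyp-eucl} forbids the existence of such an $M$ as soon as $nH^2+Ric(\nu,\nu)=n(H^2+c)>0$ and $H\geq\delta_1>0$. Therefore a properly embedded, bounded-curvature, finite-index CMC hypersurface can only exist when $n(H^2+c)\leq 0$; as $H^2\geq 0$ and $c\geq 0$ this forces $H^2+c=0$, i.e. $c=0$ and $H=0$, so $M$ is a minimal hypersurface of $\bR^{n+1}$. For $c<0$: the theorem rules out existence whenever $n(H^2+c)>\mu_{\mathcal N}^2/4 = n^2(-c)/4$. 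Hence for $M$ to exist we must have the reverse inequality
\begin{equation*}
n(H^2+c)\leq \frac{n^2(-c)}{4},
\end{equation*}
which upon dividing by $n$ and rearranging (recalling $c<0$) gives $H^2\leq -c\left(\frac{n}{4}+1\right)$, exactly the claimed bound.

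The main subtlety is bookkeeping rather than a genuine obstacle: one must confirm that the hypotheses of Theorem~\ref{embedded-hyp-eucl} are genuinely met, namely that $\mathcal N=\spaceform$ is simply connected with bounded curvature (true for all space forms) and that $H\geq\delta_1>0$ is available. The last point deserves care, since the corollary allows minimal hypersurfaces when $c=0$; the logic is contrapositive, so I would phrase the argument as \emph{if} $M$ exists with $H$ bounded away from zero then the strict inequality of the theorem must fail, yielding the stated non-strict bounds, and the borderline case $H=0$, $c=0$ is precisely the minimal hypersurface left open in part~(1). I would keep the write-up short, presenting it essentially as the two-line specialization above.
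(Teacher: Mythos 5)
Your overall route is exactly the paper's: the authors state this corollary as an immediate consequence of Theorem \ref{embedded-hyp-eucl} and give no separate proof, and your specialization is the intended one. The computations are correct: in a space form $Ric(\nu,\nu)=nc$, so the curvature hypothesis reads $n(H^2+c)>\mu_{\mathcal N}^2/4$; the entropy values $\mu_{\mathcal N}=0$ for $c\geq 0$ and $\mu_{\mathcal N}=n\sqrt{-c}$ for $\mathbb H^{n+1}(c)$ are right; and the contrapositive in the hyperbolic case gives $n(H^2+c)\leq n^2(-c)/4$, hence $H^2\leq -c\left(\frac{n}{4}+1\right)$, as claimed.

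There is, however, one genuine gap in the $c\geq 0$ case. Theorem \ref{embedded-hyp-eucl} has the hypothesis $H\geq\delta_1>0$ (it needs the embedded half-tube of Theorem \ref{tubeplonge}, which requires mean curvature bounded away from zero), so its negation only tells you that an existing $M$ must have $H=0$ \emph{or} $n(H^2+c)\leq \mu_{\mathcal N}^2/4$. Your sentence ``can only exist when $n(H^2+c)\leq 0$'' silently discards the branch $H=0$, $c>0$, i.e.\ a complete noncompact minimal hypersurface properly embedded in $\mathbb S^{n+1}$, which the theorem cannot exclude; and your closing caveat misidentifies the leftover borderline case as $H=0$, $c=0$ rather than $H=0$, $c>0$. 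Without closing this branch you cannot conclude $c=0$. The fix is trivial but must be said: $\mathbb S^{n+1}$ is compact, and a properly embedded complete hypersurface of a compact manifold is itself compact, contradicting the standing noncompactness assumption inherited from Theorem \ref{embedded-hyp-eucl}; hence $c>0$ is impossible for any value of $H$, and the remaining case $c=0$ forces $H=0$ by the entropy argument you gave. With that one sentence added, your proof is complete and coincides with the paper's.
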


  In Theorem \ref{carmo-volume-entropy}, when $\mathcal N$ is either ${\mathbb R}^{n+1}$ or ${\mathbb H}^{n+1},$ 
one can replace the volume entropy  by some  total curvature entropy, provided some restriction on the  dimension and on the 
mean curvature of $M$ are satisfied.

\begin{theorem}
\label{general-theo}
There is no complete noncompact stable  hypersurface $M$ with constant mean curvature $H$ in  a manifold 
${\mathcal N},$ $n\leq 5,$  with  $\mu_{{\mathcal T}_{2x}}=0,$ provided either 

(1) ${\mathcal N}={\mathbb R}^{n+1},$  $x\in[1,x_2),$ $H>0,$  

or 

(2) ${\mathcal N}={\mathbb H}^{n+1},$  $\varepsilon>0,$ $x\in[1,x_2-\varepsilon],$ $H^2> g_n(x)$  ($g_n(x)$ is defined in \eqref{function-g}).

\end{theorem}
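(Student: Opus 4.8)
The plan is to argue by contradiction, combining the total-curvature entropy hypothesis with the Caccioppoli machinery of Section \ref{caccioppoli} to force the traceless second fundamental form $\varphi$ to vanish identically, and then to derive a contradiction with the assumption $H \neq 0$. Suppose such a complete, noncompact stable hypersurface $M$ exists. Since $M$ is stable it has finite index (indeed index zero), so the hypotheses of Theorem \ref{tysk} are met under either case (1) or case (2); note that the conditions on $c$, $x$, and $H$ listed here match exactly those of Theorem \ref{tysk}, with $c=0$ in case (1) and $c=-1$ in case (2). Applying Theorem \ref{tysk} with $w=\varphi^{2x}$, the assumption $\mu_{{\mathcal T}_{2x}}=0$ yields $\int_M \varphi^{2x} < \infty$.

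Next I would upgrade this finiteness to the validity of the global Caccioppoli inequality \eqref{j-bis} without the compact set $K$. Since $M$ is stable, the compact set appearing in Theorems \ref{caccio-theo} and \ref{caccio-H} is empty, so inequality \eqref{j-bis} in fact holds for all $f\in C^\infty_0(M)$, i.e. $\gamma\int_M f^2\varphi^{2x}\leq {\mathcal D}\int_M \varphi^{2x}|\nabla f|^2$. This is precisely the hypothesis \eqref{har} of part (1) of Theorem \ref{caccio-entropy} with $w=\varphi^{2x}$ (after dividing by $\gamma$ and setting $C={\mathcal D}/\gamma$), and the entropy of $W(R)=\int_{B_\sigma(R)}\varphi^{2x}$ is zero by assumption. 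Therefore part (1) of Theorem \ref{caccio-entropy} applies and gives $\varphi^{2x}\equiv 0$, hence $\varphi\equiv 0$ on all of $M$; that is, $M$ is totally umbilical.

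Finally I would rule out a totally umbilical complete noncompact stable hypersurface with $H\neq 0$ in the relevant space forms. In $\mathbb{R}^{n+1}$ (case (1)) the only totally umbilical hypersurfaces are hyperplanes ($H=0$) and round spheres (compact); since $H>0$ is assumed and spheres are compact, there is no complete noncompact example, giving the contradiction. In $\mathbb{H}^{n+1}$ (case (2)) the totally umbilical hypersurfaces are geodesic spheres, horospheres, equidistant hypersurfaces, and totally geodesic copies of $\mathbb{H}^n$, with mean curvatures satisfying $H\leq 1$ for the noncompact ones (horospheres have $H=1$, equidistants and totally geodesic hypersurfaces have $H<1$); the hypothesis $H^2> g_n(x)\geq g_n(1)$ combined with the numerical value of $g_n$ forces $H>1$, which excludes all noncompact umbilical hypersurfaces, again a contradiction.

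The main obstacle I anticipate is the last step: verifying cleanly that the mean curvature bound $H^2> g_n(x)$ (or at least $H^2>g_n(1)$) genuinely rules out every complete noncompact totally umbilical hypersurface of $\mathbb{H}^{n+1}$, since this requires checking that $g_n(1)\geq 1$ for $n\leq 5$ so that $H>1$ is forced. One should confirm from the explicit formula \eqref{function-g} that $g_n(1)=\frac{(2-\gamma)^2-1}{(2-\gamma)^2-\mu}\geq 1$ in the allowed dimension range, which reduces to an elementary inequality in $n$; this is a routine but essential computation. The Euclidean case is comparatively immediate. The conceptual core of the argument—transferring the Caccioppoli inequality and the zero-entropy hypothesis into the vanishing of $\varphi$ via Theorem \ref{caccio-entropy}—is exactly where the machinery of the earlier sections pays off.
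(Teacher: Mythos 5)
Your argument reaches the same conclusion as the paper and shares its first and last steps: passing from $\mu_{{\mathcal T}_{2x}}=0$ to $\int_M\varphi^{2x}<\infty$ via Theorem \ref{tysk}, and then ruling out complete noncompact totally umbilical hypersurfaces under the stated mean curvature bounds (your worry about the hyperbolic case is easily settled: since $\mu=\frac{n^2}{4(n-1)}\geq 1$, the denominator of $g_n(x)$ is no larger than its numerator, so $g_n(x)\geq 1$ whenever it is positive, which is exactly the inequality $H^2>g_n(x)\geq 1$ the paper uses). Where you genuinely diverge is the middle step: the paper obtains total umbilicity by invoking Corollary 6.3 of \cite{INS} applied to the finite total curvature, whereas you rederive it internally by feeding the global Caccioppoli inequality \eqref{j-bis} (with empty $K$) into part (1) of Theorem \ref{caccio-entropy} with $w=\varphi^{2x}$, which yields $\varphi\equiv 0$ directly --- and in fact makes your first step redundant, since $\varphi\equiv 0$ is stronger than $\int_M\varphi^{2x}<\infty$. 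Your route is more self-contained within this paper, but it rests on two points you should make explicit: that the compact set $K$ in Theorem \ref{caccio-H} is empty when $M$ is stable (the paper records this parenthetically for Theorems \ref{caccio-theo} and \ref{reduction-exponent} but not in the statement of Theorem \ref{caccio-H}, so it has to be checked against \cite{INS}), and that the constant $\gamma$ in \eqref{j-bis} is strictly positive under hypotheses (1) or (2), since you divide by it. Both points do hold and are essentially what Corollary 6.3 of \cite{INS} packages, so your proof is correct; the comparison is a trade between citing that corollary and reassembling its ingredients from Sections \ref{caccioppoli} and \ref{entropy-section}.
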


\begin{proof} First notice that,    by Theorem  \ref{tysk},  the hypothesis 
$\mu_{{\mathcal T}_{2x}}=0.$  implies      $\int_M\varphi^{2x}<\infty.$   
  One applies Corollary 6.3 of \cite{INS}  in order to obtain that $M$ is totally umbilic.   In case (1), it follows that  $M$ is contained either in a sphere or in a plane. As $M$ is complete noncompact, $M$ is contained is a plane, then $H=0.$  Contradiction. In case (2), it follows that $M$ is contained either in a sphere, or in a horosphere, or in a equidistant sphere. The inequality $H^2>g_n(x)\geq 1$ yields that $M$ can be only contained in a sphere. 
 As $M$ is complete and noncompact, this is a contradiction.
\end{proof}

\begin{remark} \nero It is worthwhile to note   that the condition $\int_M\varphi^{2x}<\infty$ in Theorem \ref{general-theo}  is equivalent to  the apparently weaker condition $\mu_{{\mathcal T}_{2x}}=0$ (see Theorem \ref{tysk})\\
\nero In  Theorem \ref{general-theo} , under the same conditions, if ${\mathcal N}$ 
has constant sectional curvature $c\leq 0$ but   not necessarily  simply connected, one prove similarly that $M$ is totally umbilical. \\
\end{remark}

 \section{Appendix}

We develop here, the missing parts of the principal arguments of the proof of  Theorem \ref{tubeplonge}.  
 In particular we will prove estimate (\ref{phi-estimate}) and
equality (\ref{H-P}). 

The main reason for this Appendix 
is the lack of references for the  computations of the mean curvature equation in general Riemannian  manifolds, 
and for the uniform   estimates  of its  coefficients. The mean curvature equation has been extensively studied in Euclidian spaces, constant curvature spaces 
and more  recently for the particular case of  constant mean curvature  surfaces in homogenous spaces (see for instance \cite{RST}). A similar mean curvature equation was obtained in Fermi coordinates for minimal surfaces 
in \cite{CM}.  We  first, give  a proof of  the cheesebox argument,  
then we compute  the mean curvature equation and deduce the desired estimates 
of its coefficients.  
This will allow us to fill the gaps in the proof of Theorem \ref{tubeplonge}.

 \subsection{The cheesebox argument}\label{cheese}
 In this paragraph we recall the cheesebox argument and show how it implies  estimate \eqref{phi-estimate}.
 We use the same notations as in  the proof of Theorem \ref{tubeplonge}. \\
 Recall that  the section  $P $ is  defined as the graph of the function $\phi$ defined at the beginning of the proof of the second step of Theorem \ref{tubeplonge}.\\
 \indent  We first study  the case where the ambient space  $\mathcal N$ is $ \mathbb{R}^{n+1}$.\\

   \begin{figure}[lh]

\mbox{  \hskip 1 in  %
 \includegraphics[scale = .25]{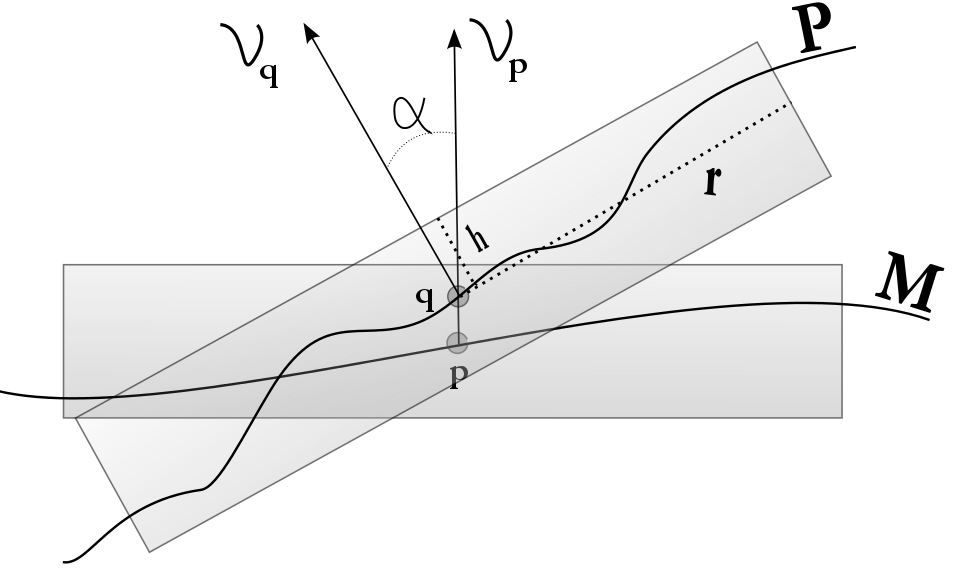}}
 \caption[]{ Intersection of the cheeseboxes  at $p\in M$ and  at $q\in P$ with $\mathbb{R}\nu_p \oplus  \mathbb{R}\nu_q$  } 
\end{figure}
 Let    $p $  be a point in $M$ and  $q $ be   a point   above $p$ 
in  the section $P$ (see Figure 3).  Consider the following  boxes of $\mathcal{N}:$

$$ C_p(\rho, h) := exp_p\left( D(\rho) \times ]0,h[\right) \ {\rm and}\  C_q(\rho, h) := exp_q\left( D(\rho) \times ]0,h[\right). $$
We choose $\rho$ as in Theorem \ref{tubeplonge}  so that $M$ and $C_p(\rho, h)$ (respectively $P$ and $C_q(\rho, h)$)
intersect only on the boundary side  $ \partial D \times [0,h]$ of  $C_p(\rho, h)$ (resp. on the boundary side  $ \partial D \times [0,h]$ of  
$C_q(\rho, h)$).
 The intersection of $C_p(\rho, h) \cup C_q(\rho, h)$ with the   plane  through $p$ generated by  the normal vectors  $\nu(p)$  
 and $\nu(q)$,    is the union of two Euclidean rectangles (see   Figure 3). \\
For a sufficiently  small $\rho,$ we may choose the height of the box to be  proportional to  the square of its radius:  $h = c\rho^2$ for some constant $c$
 depending on   the $C^1$-norm of the  the second fundamental form  $A$ of $M$.  In fact, in a neighborhood of any  $x_0\in M$, $M$  is the graph 
of the height function  $h$ defined on a   ball  $B_{x_{0}}(\rho)$ of the tangent space at $x_0$  of sufficiently small radius $\rho,$
  and such that for any $x\in B_{x_{0}}(\rho)$,   $|h(x)| \leq C|x|^2$   where $C$ is a uniform constant.

Indeed, let us clarify the relation between the second fundamental form of the aforementioned graph defined by the  height  function $h(x_1,\cdots, x_n)$  and the Hessian of $h.$ Notice that $h(0)=0$ and $\nabla h (0)=0.$

Using the Einstein's convention on indices, we have $|A|^{2}=g^{ik}g^{jl}A_{il}A_{kj}=A_{i}^{j}A_{j}^{i}$ ,
and
$$A_i^j  = \frac{h_{ik}}{W}g^{kj}, \quad {\rm with } \  W:= \sqrt{ 1+ f}, \quad  f:=|\nabla h|^2\ {\rm and}\  g^{ij}:= \delta^{ij} - \frac{h_i h_j}{W^3}$$
Therefore, 
$$A_i^j  = \frac{h_{ij}}{W}- \frac{  h_{ki}h_k h_j  }{W^3}$$
and computing $|A_i^j h_j |$, we get :
$$ |A_i^j h_j |= \left|\frac{f_i}{2W}\left( 1 -\frac{f}{W^2}    \right)\right|$$
Since $A$ is bounded, there exists a positive constant $c$ such that $|A_i^j | \leq \frac{c}{2\sqrt{n}}$, and  applying   Cauchy-Schwarz's inequality, we derive:
$$ 2|A_i^j h_j |= \left|\frac{f_i}{(1+f)^{3/2}}\right|\leq   c \,\sqrt{f} $$
Standard comparison between the solutions of the previous  differential inequality and the corresponding differential equality, together with the initial condition $ \nabla h(0)=0$, shows  that  the condition 
 $ \sum |x_i|^2  \leq \rho $ implies   $|\nabla h | \leq \frac{c\, \rho  }{\sqrt{4-(c\,\rho)^2}}$.\\
 Thus, $h$ is $C^1$-uniformly bounded on the ball $B_{x_{0}}(\rho)$ and $W$ is  also uniformly bounded.
 Finally,  $h_{ij} \leq C$ on $B_{x_{0}}(\rho)$ (where $C$ is a constant depending on $c$ and $\rho$). In conclusion, since $h(0)=0$ and $\nabla h(0) =0$, we have  $|h(x)|\leq C|x|^2 $ on $B_{x_{0}}(\rho)$.
\\
  
  We  may  also suppose 
  that  the function $\phi$ defining the section $P$ satisfies $\phi   \leq \frac{h}{2}$
 (in Figure  2, $\phi(p):= d_{\mathbb{R}^{n+1}}(p,q)$).
Let $\alpha$   be the angle  defined by $\tan \alpha = |\nabla \phi|(p)$
 (Figure 2 represents a limit case for which $M$ and $P$ necessarily intersects for  any $q'$ such that $d_{\mathbb{R}^{n+1}}(p,q' )\leq d_{\mathbb{R}^{n+1}}(p,q)$ or  any 
 $\nu(q')$ such that $\left<\nu(p),\nu(q')\right> \leq \cos \alpha$).\\
 
 For a given $\rho$, if $d_{\mathbb{R}^{n+1}}(p,q)$ is small enough then $|\nabla \phi |\leq 1$
 unless $M$ and $P$ intersects. Thus  $\alpha \leq \alpha_0= \frac{\pi}{4}.$ 
Then   elementary  plane geometry gives

   $$\rho\sin\alpha  \leq \rho \sin\alpha_0  \leq h \left(1 + \cos\alpha_0     \right)  + \phi(p) \cos\alpha_0  $$
    thus 
     $$\rho\sin\alpha \leq \left(h + \phi\right)\cos\alpha_0 + h \leq 3h$$
  hence
  $$\frac{|\nabla\phi|(p)}{\sqrt{1+|\nabla\phi|^2(p)}}\leq 3c\rho.$$
which yields $ |\nabla \phi|(p) \leq 6c\rho$.\\
Since, by hypothesis,    $\phi(p) \leq \frac{c}{2}\rho^2$ and since  the  inequality holds for any point $p\in \Omega \subset M$, we obtain 
 \begin{equation}\label{phi}
  \|\phi\|_1 := \left(\sup_{p\in \Omega  } 
  | \phi|  + \sup_{p\in \Omega  } 
  |\nabla\phi| \right)   \leq O(\rho).
 \end{equation} 
  \indent Consider the  general case, where the    ambient space  $\mathcal N$ is not necessarily Euclidean. 
 Since $\mathcal N$ has  bounded curvature,  there exists  a radius $\rho_0$ 
 depending on the  curvature of $\mathcal N,$  such that for  each point $p\in \mathcal N$, there is a harmonic coordinate chart 
 $\psi_p^{-1},$    such that 
 $\psi_p: U \left(:=  B_0^{\mathbb{R}^{n+1}\left( \rho_0\right)}\right) \subset  \mathbb{R}^{n+1} \longrightarrow  
 V := \psi_p\left(U\right)\subset {\mathcal N} $,  and 
 the pulled-back metric $g_{\mathcal N} $ is $C^{1,\alpha}$- regular,   
 $C^{1,\alpha}$- close to the Euclidean one.
 The  diffeomorphism $\psi_p$  is $C^1$- uniformly bounded in these coordinates.
   The previous result   concerning  Euclidean cheeseboxes  applies to 
  $\psi_p^{-1}(M)\cap V$ and $\psi_p^{-1}(P)\cap V$   to prove   the  $C^1$-uniformly boundedness of $\phi\circ \psi_p$  with respect to  $p\in M$.  Finally since   $\psi_p$ is  $C^1$-uniformly bounded with respect to $p$, so is    $\phi$ .
 In conclusion  there exists  a radius $\rho_0,$ depending on the  curvature of $\mathcal N$ and $M,$   such that for  each point 
 $p\in M,$ there  a cheesebox    of $M$ around $p$   of radius $\rho_0$ and height $c\rho_0^2$ in harmonic coordinate charts. 
For details about the theory of harmonic coordinates  see for instance the  survey \cite{HH} and the references therein.

\subsection{Cmc equation of a section of the normal bundle of $M$}\label{cmcequation}
 Notations are the same as in  the previous paragraph and  Section \ref{tubes}.
 Our purpose is to  compute the mean curvature  $H_P$ of the section $P$ in a neighborhood 
 of $q\in P$, in terms of   local coordinates around $p\in M$.   More precisely  we will show how to obtain the expansion  \eqref{H-P}    of $H_P$   in the proof of Theorem \ref{tubeplonge}.\\
Let $\psi_p$ be a parametrization of a neighborhood $V_p$ 
 of $p$ in $\mathcal{N}$  as given in previous  paragraph:
 $ (\psi_p :B^{{\mathbb R}^{n+1}}_0(R)\subset \mathbb{R}^{n}\times \mathbb{R} \longrightarrow V_p \subset  \mathcal{N})$
 with   
 $\psi(0) = p$ and $\psi (  B^{{\mathbb R}^{n+1}}_0(R) \cap  \mathbb{R}^{n}\times \{0\} )  = M\cap V_p$. 
  The  local section $P\cap V_p,$ being in a cheesebox,   is parametrized by a graph of a function   $\phi :
    \mathbb{R}^{n}\times \{0\} \longrightarrow \mathbb{R}$.   Indeed  $\psi_p^{-1}(P\cap V_p)$ and 
  $\psi_p^{-1}(M\cap V_p)$   are $C^{1}$-close in the pulled-back metric $g_{\mathcal{N}}$.  For simplicity 
we identify the metric  $g_\mathcal{N}$  of $\mathcal{N}$ with its pulled-back  $\psi^*(g_\mathcal{N})$ on the Euclidean ball $B^{{\mathbb R}^{n+1}}_0(R)$.  We denote by  $\{e_\alpha\}_{\alpha = 1,\cdots, n+1}$ the standard basis  of  ${\mathbb R}^{n+1}$ and 
by $\{e_i\}_{i = 1,\cdots, n}$ the standard  basis  of ${\mathbb R}^{n}\times\{0\}$.
With some abuse of notations, we identify $ (x,\phi(x)) $ with its image $\psi_p(x,\phi(x)) $  and 
derivatives with respect to $e_i$ of  a function $f$  will be denoted by $f_{,i}$. 

 \subsubsection{Mean curvature equation of the  section $P$}
 We first   choose an adapted   frame tangent to P such that the first $n$ vectors
 $\{ f_i\left( x  ,\phi\left(x\right)\right) := e_i + \phi_{,i}\left(x,\phi\left(x\right)\right) e_{n+1}\}_{i=1,\cdots n} $
 are tangent to $P $ at $\left( x,\phi\left( x\right)\right)$
 and the last vector is the unit normal field   $\nu_P(x,\phi(x))$  to   the graph of  $\phi.$ 
  In fact $\nu_{P}$ is a unit vector,  solution  of the system of    linear equations  given by: 
  $\Big\{g_{\mathcal{N}}\left(\nu_P,  F_{i}\right) = 0\Big\}_{ i = 1, \cdots, n}$
    and an easy computation yields :
 \begin{equation}\label{len}
  \nu^\alpha := \frac{1}{W}\left( - g^{\alpha i}\phi_{,i} + g^{\alpha\  n+1}\right).
  \end{equation}  
 where $g^{\alpha\beta}$ is the  inverse matrix of
  $g_{\alpha\beta} := g_{\mathcal{N}}\left(e_{\alpha},e_{\beta}\right), \alpha,\beta = 1,\cdots n+1$,
 and  $W^2:=  g^{kl}\phi_{,k}\phi_{,l}-2g^{n+1 k }\phi_{,k}+g^{n+1 n+1}$ (we use in all subsequent formulas the Einstein 
  summation convention).\\
We denote by $\tilde g_{ij} := g_{\mathcal N}(f_i,f_j) $ the coefficients of the induced metric on
  $T_qP.$ 
 Therefore, replacing  each  $f_i$ by  its expression in terms of $e_i$, we obtain 
  $$ \gc_{ij} = g_{ij} + g_{n+1 (j} \phi_{,i)} + g_{n+1 n+1} \phi_{,i}\phi_{,j}$$
(where $g_{n+1 (j} \phi_{,i)}  :=  g_{n+1 j} \phi_{,i} + g_{n+1 i} \phi_{,j} $).\\

Let us now  compute    the mean curvature  equation  for $P$. 
We have:   
 \begin{equation}\label{cmc2}
   nH_P(q)  =  - div(\nu_P)(q) = -  \gc^{ij}g_{\mathcal N}(\nabla_{f_i}\nu_P, f_j )(q) 
  =    \gc^{ij}g_{\mathcal N}(\nu_P, \nabla_{f_i} f_j )(q)  \end{equation}  
 Therefore,   for $\alpha,\beta,\gamma= 1,\cdots ,n+1$ and $i,j =  1,\cdots, n$ we obtain 
    $$(\nabla_{f_i}{f_j})^\alpha =  \Gamma^\alpha_{ij} +  \Gamma^\alpha_{n+1 (j} \phi_{,i)} +
 \Gamma^\alpha_{n+1 n+1} \phi_{,i}\phi_{,j} +\phi_{,ij}\delta^{\alpha n+1}$$
We then compute $g_{\mathcal N}(\nu_P, \nabla_{f_i} f_j )$ and plug into equation \eqref{cmc2}. We obtain
\begin{equation}\label{cmc3}
 nH_P W = \gc^{ij} \left(    \phi_{,ij} +     \left(\Gamma^k_{n+1 n+1} \phi_{,k} +
  \Gamma^{n+1}_{n+1 n+1}     \right)  \phi_{,i}\phi_{,j} + \Gamma^k_{n+1 (i}\phi_{,j)}\phi_{,k}
   + \Gamma^{n+1}_{n+1 (i} \phi_{,j)}  + \Gamma^k_{ij}\phi_{,k}  + \Gamma^{n+1}_{ij}\right)
  \end{equation}
  
We  use now  harmonic charts as in the end of   last paragraph.
 In these charts,  the induced metric $g_{\mathcal{N}}$
   of $\mathcal{N}$ on  $B^{{\mathbb R}^{n+1}}_p(\rho) $ is $C^{1,\alpha}$- regular  and  $C^{1,\alpha}$ uniformly close to  the Euclidean metric.   Since  $\phi$ is  also  $C^1$ uniformely bounded (see \eqref{phi}),  the coefficients of  equation \eqref{cmc3} are $C^{0,\alpha}$ uniformly  bounded. Using  Schauder estimates, we obtain uniform  $C^\infty$ bounds on $\phi$. \\
  Notice first that, when  $\phi =0$  the same equation (\ref{cmc3}) gives the mean curvature of the zero section:
 \begin{equation}\label{cmc3bis}
 nH_M  g^{n+1 n+1}   =  g^{ij}     \Gamma^{n+1}_{ij} 
  \end{equation}
Replacing  equation \eqref{cmc3bis} in  equation \eqref{cmc3} we obtain the estimate
      \begin{equation}\label{cmc30}
 nH_P  = nH_M    +     \gc^{ij}   \phi_{,ij}  + O(\rho^\alpha) =  nH_M    +  \Delta_M\phi + O(\rho^\alpha)  \end{equation} 
which gives estimate estimate \eqref{H-P} in the proof of  theorem \ref{tubeplonge}


\textsc{Said Ilias}

{\em Universit\'e F. Rabelais, D\'ep. de Math\'ematiques, Tours

ilias@univ-tours.fr}

\textsc{Barbara Nelli}

{\em DISIM, Universit\'a dell'Aquila

nelli@univaq.it}

\textsc{Marc Soret}

{\em Universit\'e F. Rabelais, D\'ep. de Math\'ematiques, Tours

 marc.soret@lmpt.univ-tours.fr}

  \end{document}